\newtheorem{lemma}{Lemma}[section]
\newtheorem{corollary}[lemma]{Corollary}
\newtheorem{theorem}[lemma]{Theorem}
\newtheorem{proposition}[lemma]{Proposition}
\newtheorem{remark}[lemma]{Remark}
\newtheorem{definition}[lemma]{Definition}
\newtheorem{definitions}[lemma]{Definitions}
\newtheorem{example}[lemma]{Example}
\newtheorem{examples}[lemma]{Examples}
\newcommand{\Ker}{{\rm{Ker}}}
\newcommand{\dsum}{\sum}
\newcommand{\N}{{\mathbb{N}}}
\newcommand{\uloopr}[1]{\ar@'{@+{[0,0]+(-4,5)}@+{[0,0]+(0,10)}@+{[0,0] +(4,5)}}^{#1}}
\definecolor{turquoise2}{rgb}{0,0.898039,0.933333}
\definecolor{magenta}{rgb}{1,0,1}
\definecolor{olivedrab}{rgb}{0.419608,0.556863,0.137255}
\definecolor{purple2}{rgb}{0.568627,0.172549,0.933333}
\begin{document}

\subjclass[2010]{Primary 17A60, 05C25} \keywords{Evolution algebra, evolution subalgebra, evolution ideal, non-degenerate evolution algebra, simple evolution algebra, graph associated, reducible evolution algebra, irreducible evolution algebra}

\title[Evolution algebras of arbitrary dimension and their decompositions]{Evolution algebras of arbitrary dimension and their decompositions}

\author[Y. Cabrera]{Yolanda Cabrera Casado}
\address{Y. Cabrera Casado: Departamento de \'Algebra Geometr\'{\i}a y Topolog\'{\i}a, Fa\-cultad de Ciencias, Universidad de M\'alaga, Campus de Teatinos s/n. 29071 M\'alaga.   Spain.}
\email{yolandacc@uma.es}

\author[M. Siles ]{Mercedes Siles Molina}
\address{M. Siles Molina: Departamento de \'Algebra Geometr\'{\i}a y Topolog\'{\i}a, Fa\-cultad de Ciencias, Universidad de M\'alaga, Campus de Teatinos s/n. 29071 M\'alaga.   Spain.}
\email{msilesm@uma.es}

\author[M. V. Velasco]{M. Victoria Velasco}
\address{M. V. Velasco:  Departamento de An\'{a}lisis Matem\'{a}tico, Universidad de Granada, 18071 Granada, Spain.}
\email{vvelasco@ugr.es}

\begin{abstract}
We study evolution algebras of arbitrary dimension. We analyze in deep the notions of evolution subalgebras,  ideals and non-degeneracy and describe the ideals generated by one element and characterize the simple evolution algebras. We also prove the existence and unicity of a direct sum decomposition into irreducible components for every non-degenerate evolution algebra. When the algebra is degenerate, the uniqueness cannot be assured.

The graph associated to an evolution algebra (relative to a natural basis) will play a fundamental role to describe the structure of the algebra. Concretely, a non-degenerate evolution algebra is irreducible if and only if the graph is connected. Moreover,  when the evolution algebra is finite-dimensional, we give a process (called the fragmentation process) to decompose  the algebra into irreducible components.
\end{abstract}
\maketitle

\medskip

\section{Introduction }

The mathematical study of the genetic inheritance began in 1856 with the
works of Gregor Mendel, who was a pioneer in using mathematical notation to
express his genetics laws. After relevant contributions of authors as
Jennings (1917), Serebrovskij (1934) and Glivenko (1936), to give an
algebraic interpretation of the sign $\times $ of sexual reproduction, a
precise mathematical formulation of Mendel's laws in terms of non-associative
algebras was finally provided in the well known papers \cite{Ethe1,Ethe2}.
Since then, many works pointed out that non-associative algebras are an
appropriate mathematical framework for studying Mendelian genetics
\cite{Bertrand,Reed,Tian,Word-Bus}. Thus, the term \emph{genetic algebra} was
coined to denote those algebras (most of them non-associative) used to
model inheritance in genetics.
\medskip

Recently a new type of genetic algebras,  denominated \emph{evolution
algebras}, has emerged to enlighten the study of non-Mendelian genetics,
which is the basic language of the molecular Biology. In particular, evolution
algebras can be applied to the inheritance of organelle genes, for instance,
to predict all possible mechanisms to establish the homoplasmy of cell
populations. The theory of evolution algebras was introduced by Tian in \cite{Tian},
a pioniering monograph where many connections of evolution algebras with
other mathematical fields (such as graph theory, stochastic processes, group
theory, dynamic systems, mathematical physics, etc) are established.
In this book it is shown
the close connection between evolution algebras, non-Mendelian genetics and
Markov chains, pointing out some further research topics.
Algebraically, evolution algebras are non-associative  algebras (which
are not even power-associative), and dynamically they represent discrete
dynamical systems. An \emph{evolution algebra} is nothing but a
finite-dimensional algebra $A$ provided with a basis
$B=\{e_{i} \ \vert \ i\in \Lambda \},$ such that $e_{i}e_{j}=0,$ whenever $
i\neq j$ (such a basis is said to be natural). If $e_{k}^{2}=\dsum_{i\in \Lambda }\omega
_{ik}e_{i},$ then the coefficients $\omega _{ij}$ define the named
structure matrix $M_{B}$ of $A$ relative to $B$  that codifies the
dynamic structure of $A.$

In \cite{Tian}, evolution algebras are associated to free populations to give
the explicit solutions of a nonlinear evolutionary equation in the absence
of selection, as well as general theorems on convergence to equilibrium in
the presence of selection. In the last years, many different aspects of the
theory of evolution algebras have seen  considered. For instance, in \cite{evo2}
evolution algebras are associated to function spaces defined by Gibbs
measures on a graph, providing a natural introduction of thermodynamics in
the study of several systems in biology, physics and mathematics. On the
other hand, chains of evolution algebras (i.e. dynamical systems the state of
which at each given time is an evolution algebra) are studied in
\cite{evo1,evo10,evo11,evo15}. Also the derivations of some evolution algebras have
been analyzed in \cite{Tian,evo3,evo4}. In \cite{evo4}, the evolution
algebras have been used to describe the inheritance of a bisexual population
and, in this setting, the existence of non-trivial homomorphisms onto the
sex differentiation algebra have been studied in \cite{evo14}. Algebraic
notions as  nilpotency and  solvability may be interpreted
biologically as the fact that some of the original gametes (or generators)
become extinct after a certain number of generations, and these algebraic properties have
been studied in \cite{evo5,evo6,evo7,evo8,evo9,evo12,evo13}.

\medskip

Once we have given a general overview of evolution algebras, we start to explain the results in this work.
\medskip

Since evolution algebras appear after mendelian algebras, it is natural to ask if these  are evolution algebras. The answer is no, as we show in Example \ref{MendEvol}. The fact that evolution algebras are not mendelian algebras is known.

In this paper we deal with evolution algebras of arbitrary dimension. In Section \ref{basic} we study the notions of subalgebra and ideal and explore when they have natural bases and when their natural bases can be extended  to the whole algebra (we call this the extension property) and provide examples in different situations. We also show that the class of evolution algebras is closed under quotients and under homomorphic images, but not under subalgebras or ideals and give an example of a homomorphism of evolution algebras whose kernel is not an evolution ideal.
The aim of the second part of this section is the study of non-degeneracy. We show that this notion, which is given in terms of a fixed natural basis of the algebra,  does not depend on the election of the basis (Corollary \ref{MVM}). A radical is introduced (the intersection of all the absorption ideals) which is zero if and only if the algebra is non-degenerate (Proposition \ref{Relacion}).
The classical notions of semiprimeness and nondegeneracy are also studied and compared to that of non-degeneracy (see Proposition \ref{nilpotenciaIdealesEvolucion} and the paragraph before).
In the last part of this section we associate a graph to any evolution algebra. This has been done yet in the literature, although for finite dimensional evolution algebras. The use of the graph will allow to see in a more visual way properties of the evolution algebra. For example, we can detect the annihilator of an evolution algebra by looking at its graph (concretely determining its sinks) and we can say when a non-degenerate evolution algebra is irreducible (as we explain below).

In Section \ref{UnElemento} we use the graph representation and the notion of descendent to describe the ideal generated by any element in an evolution algebra (Proposition \ref{cuadrado}) and show that its dimension as a vector space is at most countable (Corollary \ref{dimensGeneral}). This  implies that any simple algebra has dimension at most countable.

Section \ref{Simple} is devoted to the study and characterization of simple evolution algebras (Proposition \ref{simple} and Theorem \ref{CharacSimple}).  We also provide examples to show that the conditions in the characterizations cannot be dropped. We finish the section with the characterization of finite dimensional simple evolution algebras (Corollary \ref{CharSimpleFin}).

The direct sum  of a certain number of evolution  algebras is an evolution algebra in a canonical way.
In Section \ref{DirectSum} we deal with the question of when a non-zero evolution
algebra $A$ is the direct sum of  non-zero evolution subalgebras. In particular, an evolution algebra with an associated graph (relative to a certain natural basis) which is not connected is reducible (see Proposition \ref{CompCon}). Next, Theorem \ref{caracteriz}  characterizes the decomposition of a non-degenerate evolution algebra into subalgebras (equivalently ideals) in terms of the elements of any natural basis. We also are interested in determining when every component in a direct sum is irreducible. In Corollary \ref{conexo} we prove that a non-degenerate evolution algebra is irreducible if and only if the associated graph (relative to any natural basis) is connected.

 The decomposition $A= \oplus_{\gamma\in \Gamma}I_\gamma$ of an evolution algebra into irreducible ideals (called an optimal decomposition) exists and is unique whenever the algebra is non-degenerate (Theorem \ref{uniopti}). To assure the uniqueness, this hypothesis cannot be eliminated  (Example \ref{new}).

To get a direct-sum decomposition of a finite dimensional evolution algebra  we identify in the associated graph (relative to a natural basis) the principal cycles and the chain-start indices through the fragmentation process (Proposition \ref{uni-fragme}). This provides an optimal direct-sum decomposition,
which is unique, when the algebra is non-degenerate, as shown in Theorem \ref{final}.

In the last section of the paper we provide a program with Mathematica to
obtain the optimal fragmentation of a natural basis. From this we get a direct-sum decomposition of a reducible evolution algebra starting from its structure matrix.

We have tried to translate the mathematical concepts into biological meaning.

\section{Basic facts about evolution algebras}\label{basic}

Before  introducing evolution algebras we establish in a precise way what we
mean by an algebra in this paper. An \emph{algebra} is a vector space $A$ over a
field $\mathbb{K}$, provided with a bilinear map $A\times A\rightarrow A$ given by
$(a,b)\mapsto ab,$ called  the \emph{multiplication} or the \emph{product} of $A.$
An algebra $A$  such that $ab=ba$ for every $ a,b\in A$  will be called \emph{commutative}.
If $(ab)c=a(bc)$ for every $a,b,c\in A,$ then we say that $A$ is \emph{associative}.
We recall that an algebra $A$ is \emph{flexible} if $a(ba)=(ab)a$ for
every $a,b\in A.$  \emph{Power associative} algebras are those  such that
the subalgebra generated by an element is associative. Particular cases of flexible
algebras are the commutative and also the associative ones.

\medskip

The theory of evolution algebras  appears in the study of non-Mendelian inheritance
(which is essential for molecular genetics). This is the case, for example, of the
bacterial species Escherichia coli because their reproduction is asexual.
In particular, evolution algebras model population genetics (which is the study
of the frequency and interaction of alleles and genes in populations) of
organelles (specialized subunits within a cell that have a specific
function) as well as organisms such as the Phytophthora infestans (an
oomycete that causes the serious potato disease known as late blight or
potato blight, and which also infects tomatoes and some other members of the
Solanaceae).

Let us consider a population of organelles in a cell or a cell clone, and
suppose that $e_{1},\dots, e_{n}$ are $n$ different genotypes in the organelle
population. By the non-mendelian inheritance the
crossing of genotypes is impossible since it is uniparental inheritance.
Thus $e_{i}e_{j}=0$ for every $i\neq j.$ On the other hand, intramolecular and
intermolecular recombination within a lineage provides evidence that one
organelle genotype could produce other different genotypes. Consequently:
$$
\begin{matrix}
e_{i}e_{i}=\sum\limits_{k=1}^{n}\omega _{ki}e_{k},
\end{matrix}
$$

\noindent
where $\omega _{ki}$ is a positive number that can be interpreted as the
rate of the genotype $e_{k}$ produced by the genotype $e_{i}$ (see \cite[pp.
9, 10]{Tian}). Therefore, as pointed out in \cite{Tian}, the next definition models all
the non-Mendelian inheritance phenomena.
\medskip

\begin{definitions}
\rm
An \emph{evolution algebra} over a field $\mathbb K$ is a $\mathbb K$-algebra $A$ provided with
a basis $B=\{e_{i} \ \vert \ i\in \Lambda \}$ such that $e_{i}e_{j}=0$
whenever $i\neq j$.
 Such a basis $B$ is called a \emph{natural basis}.
Fixed a natural basis $B$ in $A,$ the scalars $\omega _{ki}\in \mathbb K$ such that
 $e_{i}^{2}:=e_ie_i=\dsum_{k\in \Lambda} \omega _{ki}e_{k}$ will be called the \emph{structure constants} of $A$\emph{\ relative to} $B$, and the matrix $M_B:= \left(w_{ki}\right)$ is said to be the \emph{structure matrix of} $A$ \emph{relative to} $B$. We will write $M_B(A)$ to emphasize the evolution algebra we refer to.
Observe that  $\vert \{k\in\Lambda \vert \ \omega_{ki}\neq 0\}\vert < \infty$ for every $i$, therefore $M_B$ is a matrix in ${\rm CFM}_\Lambda(\mathbb K)$, where ${\rm CFM}_\Lambda(\mathbb K)$ is the vector space of those
 matrices (infinite or not) over $\mathbb K$ of size $\Lambda \times \Lambda$ for which every column has at most a finite number of non-zero entries.
\end{definitions}

According to \cite{Tian}, the product $e_{i}e_{i}$, where $e_i$ is in a finite dimensional natural basis, mimics the self-reproduction of alleles
in non-Mendelian genetics.

\medskip

Note that  an $n$-finite dimensional  algebra $A$ is an evolution algebra if and only if there is a
basis $B=\{e_{1},...,e_{n}\}$  relative to which the multiplication
table is diagonal:
\begin{equation*}
\begin{tabular}{c|ccc}
& $e_{1}$ & $...$ & $e_{n}$ \\ \hline
$e_{1}$ & $\dsum\limits_{k=1}^{n}\omega _{k1}e_{k}$ & $0$ & $
\overset{}{0}$ \\
$\vdots $ & $0$ & $\dsum\limits_{k=1}^{n}\omega _{ki}e_{k}$ & $0$
\\
$e_{n}$ & $0$ & $0$ & $\dsum\limits_{k=1}^{n}\omega _{kn}e_{k}$
\end{tabular}
\end{equation*}

In this case, the structure matrix of the evolution algebra $A$
relative to the natural basis $B$ is the following one:
\begin{equation*}
M_B=\left(
\begin{array}{ccc}
\omega _{11} & \ldots & \omega _{1n} \\
\vdots & \ddots & \vdots \\
\omega _{n1} & \cdots & \omega _{nn}
\end{array}
\right) \in M_n(\mathbb K).
\end{equation*}

Every evolution algebra is uniquely determined by its structure matrix: if $A$ is an evolution algebra and $B$ a natural basis of $A$, there is a matrix, $M_B$, associated to $B$ which represents the product of the elements in this basis. Conversely, fixed a basis
$B=\{e_i\ \vert \ i\in \Lambda\}$ of a $\mathbb K$-vector space $A,$
each matrix in $ {\rm CFM}_\Lambda(\mathbb K)$ defines a product in $A$ under which $A$ is an
evolution algebra and $B$ is a natural basis.
\medskip

Now we compute the formula of the product of any two elements in an evolution algebra.
Let $A$ be an evolution algebra and $B=\{e_{i}\ \vert\  i\in \Lambda \}$ a natural
basis. Consider elements $a=\sum_{i\in \Lambda}\alpha _{i} e_{i}$ and
$b= \sum_{i\in \Lambda }$ $\beta _{i}$$e_{i}$ in $A$, with $\alpha_i, \beta_i \in \mathbb K$. Then:

\begin{equation*}
ab=\sum_{i\in \Lambda }\alpha _{i}\beta _{i}e_{i}^{2}\newline
=\sum_{i\in \Lambda }\alpha _{i}\beta _{i}\left( \sum_{j\in \Lambda
}\omega_{ji}e_{j}\right) =\sum_{i,j\in \Lambda }\alpha _{i}\beta _{i}\omega
_{ji}e_{j}.
\end{equation*}

We use this computation to produce examples of evolution algebras which are not power-associative. Indeed, from the equation above we have

\begin{equation*}
e_{i}^{2}e_{i}^{2}=\dsum_{k\in \Lambda}\omega _{ki}e_{k}\dsum_{j\in \Lambda}\omega
_{ji}e_{j}=\dsum_{k\in \Lambda}\omega _{ki}^{2}e_{k}^{2}
\end{equation*}
and

\begin{equation*}
(e_{i}^{2}e_{i})e_{i}=\left(\left(\dsum_{k\in \Lambda}\omega _{ki}e_{k}\right)e_i\right)e_{i}=\omega
_{ii}e_{i}^{2}e_{i}=\omega _{ii}\left(\dsum_{k\in \Lambda}\omega _{ki}e_{k}\right)e_{i}=\omega
_{ii}^{2}e_{i}^{2}.
\end{equation*}

Thus every matrix $(\omega_{ki})\in  {\rm CFM}_\Lambda(\mathbb K)$ such that $\omega_{ki}^2 \neq 0$, with $k\neq i$, gives an example of an evolution algebra which is not power-associative. In fact, the only evolution algebras which are power-associative are those such that $w_{ii}^2=w_{ii}$ for every $i$.
Consequently, evolution algebras are not, in general, Jordan, alternative or associative algebras. Evolution algebras are not Lie algebras either. However, by definition, every evolution algebra is commutative and, hence, flexible.
\medskip

We said at the beginning of this section that evolution algebras are the language of non-Mendelian genetics. The next example shows that the class of algebras modeling Mendel's laws are not included in the class of evolution algebras. More precisely we will see that the zygotic algebra for simple Mendelian inheritance for one gene with two alleles, A and a, is not an evolution algebra. For this algebra, according to Mendel laws, zygotes have three possible genotypes, namely: $AA$, $Aa$ and $aa$. The rules of simple Mendelian inheritance are expressed in the  multiplication table included in the example that follows (see \cite{Reed} for details and  similar examples of algebras following Mendel's laws).

\begin{example}\label{MendEvol}
\rm
Consider the vector space generated by the basis  $B=\{AA,Aa,aa\}$
provided with the multiplication table given by:

\begin{equation*}
\begin{tabular}{c|ccc}
& $AA$ & $Aa$ & $aa$ \\ \hline
$AA$ & $AA$ & $\overset{}{\frac{1}{2}AA+\frac{1}{2}Aa}$ & $Aa$ \\
$Aa$ & $\frac{1}{2}AA+\frac{1}{2}Aa$ & $\frac{1}{4}AA+\frac{1}{4}aa+\frac{1}{
2}Aa$ & $\overset{}{\frac{1}{2}aa+\frac{1}{2}Aa}$ \\
$aa$ & $Aa$ & $\overset{}{\frac{1}{2}aa+\frac{1}{2}Aa}$ & $aa$
\end{tabular}
\end{equation*}

We claim that this is not an evolution algebra.
\end{example}
\begin{proof}[Proof of the claim.]
We will see that this algebra does not have a natural basis. Suppose on the contrary that there exists  a natural basis $B'=\{e_{i} \ \vert \ i\in \{1,2,3\} \}$. For each $i \in \{1,2,3\}$ we may write $e_{i}=\alpha_{1i} AA + \alpha_{2i} Aa + \alpha_{3i} aa$.

Since $e_ie_j=0$ for each $i,j \in \{1,2,3\}$, with $i\neq j$, we have that:

$$
\begin{matrix}
\nonumber  4 \alpha_{1j} \alpha_{1i} + 2\alpha_{1j}\alpha_{2i} +  2\alpha_{2j}\alpha_{1i} +  \alpha_{2j}\alpha_{2i} & =  0 \\
 \alpha_{1i} \alpha_{2j} + 2\alpha_{1i}\alpha_{3j}+  \alpha_{2i}\alpha_{1j} +  \alpha_{2i}\alpha_{2j}+ \alpha_{2i}\alpha_{3j} + 2 \alpha_{3i}\alpha_{1j} + \alpha_{3i}\alpha_{2j} & = 0 \\
 4 \alpha_{3i} \alpha_{3j} + 2\alpha_{3i}\alpha_{2j} +  2\alpha_{2i}\alpha_{3j} +  \alpha_{2i}\alpha_{2j} & =  0 \\
\end{matrix}
$$
for every $i,j \in \{1,2,3\}.$
\medskip

We can express these three equations as:
$$
\begin{matrix}
 \nonumber (2\alpha_{1i}+\alpha_{2i})(2\alpha_{1j}+\alpha_{2j}) & = 0 \\
\nonumber (2\alpha_{3i}+\alpha_{2i})(2\alpha_{3j}+\alpha_{2j}) & =  0 \\
\alpha_{1i}\alpha_{2j}+2\alpha_{1i}\alpha_{3j}+  \alpha_{2i}\alpha_{1j} + \alpha_{2i}\alpha_{2j}+ \alpha_{2i}\alpha_{3j} + 2 \alpha_{3i}\alpha_{1j} + \alpha_{3i}\alpha_{2j} & =  0
\end{matrix}
$$

Since these identities hold for every $i,j \in \{1,2,3\}$,  the only option is that there exist $m$, $n$, $s$  $\in \{1,2,3\}$, with $m \neq n$ and $m \neq s$, such that:

\begin{eqnarray}\label{equation1}
\left\{\quad \begin{matrix}
2\alpha_{1m}+\alpha_{2m} & =  0 \\
2\alpha_{1n}+\alpha_{2n} & =  0 \\
2\alpha_{3m}+\alpha_{2m} & =  0 \\
2\alpha_{3s}+\alpha_{2s} & =  0
\end{matrix}
\right.
\end{eqnarray}

Now, we distinguish two cases:

\begin{description}
\item[Case 1] Suppose that $n \neq s$.
From (\ref{equation1}) we obtain that:
$$
\begin{matrix}
\alpha_{1m} =&\alpha_{3m}\\
\alpha_{2m} =&-2\alpha_{1m}\\
 \alpha_{2n} =&-2\alpha_{1n}\\
 \alpha_{2s} =&-2\alpha_{3s}
\end{matrix}
$$

It follows:
 $$
 \begin{matrix}
 v_m  =&\alpha_{1m}e_1-2\alpha_{1m}e_2+\alpha_{3m}e_3 \\
 v_n =& \alpha_{1n}e_1-2\alpha_{1n}e_2+\alpha_{3n}e_3 \\
v_s = &\alpha_{1s}e_1-2\alpha_{3s}e_2+\alpha_{3s}e_3 \\
 \end{matrix}
$$

On the other hand, if we take $i=n$ and $j=s$ in (1), then
$$ \alpha_{1n}\alpha_{2s}+2\alpha_{1n}\alpha_{3s}+  \alpha_{2n}\alpha_{1s} + \alpha_{2n}\alpha_{2s}+ \alpha_{2n}\alpha_{3s} + 2 \alpha_{3n}\alpha_{1s} + \alpha_{3n}\alpha_{2s} =  0;$$
\noindent
this means that $\alpha_{1n}=\alpha_{3n}$ or $\alpha_{3s}=\alpha_{1s}$.
  In any case, this is impossible due to the fact that $v_m$, $v_n$ and $v_s$ are linearly independent.

\item[Case 2] Suppose that $n=s$, then:
 $$
 \begin{matrix}
  \alpha_{1m} =& \alpha_{3m} \\
  \alpha_{1n} =& \alpha_{3n} \\
  \alpha_{2m}=& -2\alpha_{1m}\\
  \alpha_{2n}=& -2\alpha_{1n}
  \end{matrix}
  $$
  \noindent
 which is impossible because $v_{m}$ and $v_n$ are linearly independent.
  \end{description}
\end{proof}


\subsection{Subalgebras and ideals of an evolution algebra}


In this section we study the notions of evolution subalgebra and evolution ideal.
We will see that the class of evolution algebras is not closed neither under subalgebras (Example \ref{Ejtian})
nor under ideals (Example \ref{evolu-ideal}). This last example also shows that the kernel of a homomorphism
between evolution algebras is not necessarily an evolution ideal (contradicting \cite[Theorem 2, p. 25]{Tian}).

\begin{example}
\emph{\cite[Example 1.2]{Tian-vo}.}\label{Ejtian}
\rm
Let $A$ be the evolution
algebra with natural basis $B=\{e_{1},e_{2},e_{3}\}$ and
multiplication table given by $e_{1}^{2}=e_{1}+e_{2}=-e_{2}^{2}$
and $e_{3}^{2}=-e_{2}+e_{3}.$ Define $u_{1}:=e_{1}+e_{2}$
and $u_{2}:=e_{1}+e_{3}.$ Then the subalgebra generated
by $u_{1}$ and $u_{2}$ is not an evolution algebra as follows. Suppose on the
contrary that there exist $\alpha ,\beta ,\gamma ,\delta \in
\mathbb{K}$ such that $v_{1}=\alpha u_{1}+\beta u_{2} $ and
$v_{2}=\gamma u_{1}+\delta u_{2}$ determine a natural basis of the considered subalgebra.
Since
$
v_{1}v_{2}=(\alpha u_{1}+\beta u_{2})(\gamma u_{1}+\delta u_{2})=(\alpha \delta +\beta
\gamma )u_{1}+\beta \delta u_{2}
$, the identity $v_{1}v_{2}=0$ and the linear independency of $u_1$ and $u_2$ imply that $v_{1}$ and $v_{2}$ are linearly
dependent, a contradiction.

\end{example}

\medskip

Because a subalgebra of an evolution algebra does not need to be an
evolution algebra it is natural to introduce the notion of evolution subalgebra.
In \cite[Definition 4, p. 23]{Tian} (and also in \cite{Tian-vo}),
an evolution subalgebra of an evolution algebra $A$ is defined as a subspace $A'$, closed under the product of $A$ and
endowed with a natural basis $\{e_{i} \ \vert \ i\in \Lambda'\}$ which can be extended to a natural basis $B=\{e_{i} \ \vert \ i\in \Lambda \}$ of $A$ with $\Lambda'\subseteq \Lambda$. Nevertheless, we prefer to introduce the following new definition of evolution subalgebra.

\begin{definitions}\label{subal}
\rm
An \emph{evolution subalgebra} of an evolution algebra $A$
is a subalgebra $A' \subseteq A$ such that $A'$
is an evolution algebra, i.e. $A'$ has a natural basis.

We say that $A'$ has the \emph{extension property} if there exists a natural basis $B'$ of $A'$ which can be extended to a natural basis of $A$.
\end{definitions}

\begin{remark}
\rm

Let $A$ be an evolution algebra with basis $\{e_{i} \ \vert \ i\in \Lambda'\}$. As it was said before every element $e_i$ can be interpreted as a genotype. A linear combination $\sum\limits_{i\in \Lambda}\alpha_ie_i$ can be seen as a single individual such that the frequency of having genotype $e_i$ is $\alpha_i$.

A subalgebra $A'$ of $A$ is a population consisting of single individuals, each of which has a certain frequency of having genotype $e_i$ and such that its reproduction (i.e. its product) remains in $A'$.

An evolution subalgebra $A'$ will have the extension property if there exist genotype sets $B'$ and $B''$ of $A$ such that $B'$ is a natural basis of $A'$ and $B'\cup B''$ is a natural basis of $A$.
\end{remark}

Note that an evolution subalgebra in the meaning of \cite{Tian} is an evolution subalgebra in the sense of Definitions \ref{subal} having the extension property.
Thus, this last definition of evolution subalgebra is natural and less restrictive as  Example
\ref{hui} below proves (where we give an ideal $I$ which is an evolution algebra but has not the extension property). First, we introduce the notion of evolution ideal.

Recall that a subspace $I$ of a commutative algebra $A$ is said to be an \emph{ideal}
if $IA \subseteq I$. While in \cite{Tian} every evolution subalgebra is an ideal, this is not
the case with the definition of ideal given in Definitions \ref{subal} as the following example shows.

\begin{example}\label{dife}
\rm
Let $A$ be an evolution algebra with natural basis $B=\{e_{1,}e_{2},e_{3}\}$ and multiplication given by $e_{1}^{2}=e_{2},$ $e_{2}^{2}=e_{1}$ and $e_{3}^{2}=e_{3}$.
Then, the subalgebra $A'$ generated by $e_{1}+e_{2}$ and $e_{3}$ is an evolution subalgebra with natural basis $B'=\{e_{1}+e_{2},e_{3}\}$  but it is not an ideal as $e_{1}(e_{1}+e_{2})\notin A'$.
\end{example}

On the other hand, not every ideal of an evolution algebra has a natural basis.

\begin{example} \label{evolu-ideal}
\rm
Let $A$ be the evolution algebra with natural basis $B=\{e_{1},e_{2},e_{3}\}$ and product given by $e_{1}^{2}=e_{2}+e_{3}$, $e_{2}^{2}=e_{1}+e_{2}$ and $e_{3}^{2}=-(e_{1}+e_{2})$. Define $u_{1}:=e_{1}^{2}$ and $u_{2}:=e_{2}^{2}$. It is easy to check that $u_{1}$ and
$u_{2}$ are linearly independent and that the subspace they generate is
$$
I:=\{\alpha e_{1}+(\alpha +\beta )e_{2}+\beta e_{3} \ \vert \ \alpha ,\beta \in
\mathbb{K}\}.
$$

Since $e_{1}u_{1}=0$, $e_{2}u_{1}=u_{2}$, $e_{3}u_{1}=-u_{2}$, $e_{1}u_{2}=u_{1}$, $e_{2}u_{2}=u_{2}$
and $e_{3}u_{2}=0$, we have that $I$ is an ideal of $A$. Nevertheless, $I$ has not a natural basis because if $v_{1}$ and $v_{2}$ are elements of $I$ such that $v_{1}v_{2}=0$, then $v_{1}$ and $v_{2}$ are not linearly independent. Indeed, if $v_{1}=\alpha e_{1}+(\alpha +\beta
)e_{2}+\beta e_{3}$ and $v_{2}=\lambda e_{1}+(\lambda+\mu)e_{2}+\mu e_{3}$, for $\alpha,\beta ,\lambda, \mu\in \mathbb{K}$, then
$$
v_{1}v_{2}=\alpha \lambda u_{1}+[(\alpha +\beta )(\lambda+\mu)-\beta\mu]u_{2}.
$$

\noindent
Consequently, if $v_{1}v_{2}=0$ then $\alpha \lambda=0$ and $(\alpha +\beta)(\lambda+\mu)=\beta \mu$. It follows that $\alpha =\lambda=0$, or $\alpha =\beta =0$, or $\lambda=\mu=0$, and hence $v_{1}$ and $v_{2}$ are not linearly independent.
\end{example}

This justifies the introduction of the following definition.

\begin{definition}\label{evoide}
\rm An \emph{evolution ideal} of an evolution algebra $A$ is
an ideal $I$ of $A$ such that $I$ has a natural basis.
\end{definition}

\begin{remark}
\rm

Biologically, an ideal $I$ of an evolution algebra $A$ is a subalgebra such that the reproduction (multiplication) of genotypes of $A$ by single individuals of $I$ produces single individuals in $I$.
\end{remark}

Clearly, evolution ideals are evolution subalgebras but the converse is not true as Example \ref{dife} proves (because an evolution subalgebra does not need to be an ideal). Also
Example \ref{evolu-ideal} shows that an ideal of an evolution
algebra does not need to be an evolution ideal.

\begin{remark}
\rm
In \cite[Definition 4, p. 23]{Tian}, the evolution ideals of an evolution algebra $A$ are defined as
those ideals $I$ of $A$ having a natural basis that can be extended to a natural basis of $A$. It is shown in \cite[Proposition 2, p. 24]{Tian} that every evolution subalgebra is an evolution ideal (in the sense of \cite{Tian}), that is, evolution ideals and evolution subalgebras are the same mathematical concept. This contrasts with our approach (Definitions \ref{subal} and \ref{evoide}, and Examples \ref{dife} and \ref{hui}).
\end{remark}

We finally show that there are examples of evolution ideals for which  no  natural basis can  be extended to a natural basis of the whole evolution algebra. In other words, our definition of evolution ideal is more general than the corresponding definition given in \cite{Tian}.

\begin{example}\label{hui}
\rm
Let $A$ be an evolution algebra with natural basis $B=\{e_{1,}e_{2},e_{3}\}$ and multiplication given by $e_{1}^{2}=e_{3}$, $e_{2}^{2}=e_{1}+e_{2}$ and $e_{3}^{2}=e_{3}$. Let $I$ be the ideal generated by $e_{1}+e_{2}$ and $e_{3}$. Then $I$ is an evolution
ideal with natural basis $B_{0}=\{e_{1}+e_{2},e_{3}\}$.
However no natural basis of $I$ can be extended to a natural basis of $A$. Indeed, if $u=\alpha(e_1+e_2)+\beta e_3$ $v=\gamma(e_1+e_2)+\delta e_3$ and $w=\lambda e_{1}+\mu e_{2}+\rho e_{3}$ is such that the set $\{u,v,w\}$ is a natural basis of $A$, then $uv=0$, $uw=0$ and $vw=0$. This implies the following conditions: $\alpha \gamma =0$, $\beta \delta=0$, $\alpha \mu =0$, $\alpha \lambda + \beta \rho =0$, $\gamma \mu=0$ and $\gamma\lambda+\delta\rho =0$. Therefore, the only possibilities are $\alpha=\delta=\rho=\mu=0$ or $\gamma=\beta=\mu=\lambda=\rho=0$, a contradiction because $\{u,v,w\}$ is a basis. This means that $I$ has not the extension property.
\end{example}

We finish this subsection with the result stating that the class of evolution algebras is closed under quotients by ideals (see also \cite[Lemma 2.9]{EL}). The proof is straightforward.

\begin{lemma}\label{quotient}
Let $A$ be an evolution algebra and $I$ an ideal of $A$. Then $A/I$ with the natural product is an evolution algebra.
\end{lemma}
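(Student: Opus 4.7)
My plan is to transport the natural basis of $A$ down through the quotient map $\pi \colon A \to A/I$, and then extract a basis from its image. Commutativity of $A$ together with $I$ being an ideal guarantees that the product on $A/I$ defined by $(a+I)(b+I) := ab+I$ is well defined and bilinear, so $A/I$ is already a commutative $\mathbb{K}$-algebra; the only thing left is to exhibit a natural basis.

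First I would set $\bar e_i := \pi(e_i) = e_i + I$ for each $i \in \Lambda$. Since $\{e_i \mid i \in \Lambda\}$ spans $A$ and $\pi$ is surjective and linear, the family $\{\bar e_i \mid i \in \Lambda\}$ spans $A/I$ as a $\mathbb{K}$-vector space. Second, for every $i \neq j$ in $\Lambda$ one has
\begin{equation*}
\bar e_i\, \bar e_j \;=\; \pi(e_i)\pi(e_j) \;=\; \pi(e_i e_j) \;=\; \pi(0) \;=\; 0,
\end{equation*}
so any linearly independent subfamily of $\{\bar e_i\}_{i\in\Lambda}$ automatically satisfies the defining property of a natural basis.

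The only remaining step is to turn the spanning set $\{\bar e_i\mid i\in\Lambda\}$ into a basis. By Zorn's lemma one chooses a maximal linearly independent subset $\{\bar e_i \mid i \in \Lambda'\}$ with $\Lambda' \subseteq \Lambda$; maximality forces every $\bar e_j$ (with $j \in \Lambda\setminus\Lambda'$) to lie in the linear span of this subset, so it spans all of $A/I$ and is therefore a basis. Combined with the displayed identity above, $B' := \{\bar e_i \mid i\in\Lambda'\}$ is a natural basis of $A/I$, which proves the lemma.

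The argument is essentially routine; the only conceptual point to flag is that in the infinite-dimensional case one must invoke Zorn's lemma (or equivalently the existence of bases in arbitrary vector spaces) to carry out the extraction of $\Lambda'$ from $\Lambda$, rather than a finite pruning procedure. No obstacle of substance appears, which is why the authors can reasonably describe the proof as straightforward.
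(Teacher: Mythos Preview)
Your argument is correct and is exactly the straightforward verification the paper has in mind: the remark immediately following the lemma confirms that the intended natural basis of $A/I$ is extracted from $\{\overline{e_i}\mid i\in\Lambda\}$, just as you do. Your use of Zorn's lemma to handle the infinite-dimensional case is the appropriate level of care, and nothing more is needed.
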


\begin{remark}
\rm
Let $B:=\{e_{i} \ \vert \ i\in \Lambda \}$ be a natural basis of an
evolution algebra $A$ and let $I$ be an ideal of $A$. Then
$B_{A/I}:=\{\overline{e_{i}}\ \vert \ i\in \Lambda ,e_{i}\notin I\}$ is not necessarily a natural basis of
$A/I$. For an example, consider $A$ and $I$ as in Example \ref{evolu-ideal}. Then $e_1, e_2, e_3\notin I$ and
hence $B_{A/I}=\{\overline{e_1}, \overline{e_2}, \overline{e_3}\}$, which is not a basis of $A/I$ as
the dimension of $A/I$ (as a vector space) is one. Nevertheless, the set $B_{A/I}$ always contains a natural basis of $A/I$.
\end{remark}

Given two  algebras $A$ and $A'$, we recall that a linear map $f: A \to A'$ is said to be an \emph{homomorphism of algebras}  (\emph{homomorphism} for short) if
$f(xy) = f(x) f(y) $ for every $x, y\in A$.

\begin{remark}
\rm
\cite[Theorem 2, p. 25]{Tian} is not valid in general.
Let $I$ be an ideal of an evolution algebra. Then the map $\pi: A \to A/I$ given by $\pi(a)=\overline a$ is a homomorphism of evolution algebras (indeed, $A/I$ is an evolution algebra by Lemma \ref{quotient}) whose kernel is $I$. By \cite[Theorem 2, p. 25]{Tian}, $\Ker(\pi) =I$ is an evolution subalgebra in the sense of \cite{Tian} and, in particular, $I$ has a natural basis. But this  is not always true. For example, take $A$ and $I$ as in Example \ref{evolu-ideal}. Then $I$ is not an evolution ideal (i.e. has not a natural basis), as it is shown in that example.
\end{remark}

We finish by showing that the class of evolution algebras is closed under homomorphic images.

\begin{corollary}
\label{coroquotient}Let $f :A\to A'$ be a homomorphism between the evolution algebras $A$ and $A'$. Then ${\rm Im}(f)$ is an
evolution algebra.
\end{corollary}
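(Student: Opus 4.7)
The plan is to reduce the statement to Lemma \ref{quotient} via a first-isomorphism-theorem argument, observing that ``being an evolution algebra'' transfers along any algebra isomorphism because a natural basis is pushed forward to a natural basis.

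First I would verify that $\Ker(f)$ is an ideal of $A$. This is standard: $\Ker(f)$ is a subspace by linearity of $f$, and if $x \in \Ker(f)$ and $a \in A$, then $f(xa) = f(x) f(a) = 0 \cdot f(a) = 0$, so $xa \in \Ker(f)$; commutativity of $A$ (every evolution algebra is commutative) makes it automatically a two-sided ideal. Applying Lemma \ref{quotient}, I conclude that $A/\Ker(f)$ is an evolution algebra.

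Next I would invoke the first isomorphism theorem, which holds verbatim for arbitrary non-associative algebras: the map $\overline{f}: A/\Ker(f) \to \mathrm{Im}(f)$ defined by $\overline{f}(\overline{a}) = f(a)$ is a well-defined bijective algebra homomorphism. Finally, I would observe that if $\varphi: E \to E'$ is an isomorphism of $\mathbb{K}$-algebras and $E$ is an evolution algebra with natural basis $\{e_i \mid i \in \Lambda\}$, then $\{\varphi(e_i) \mid i \in \Lambda\}$ is a natural basis of $E'$: it is a $\mathbb{K}$-basis (because $\varphi$ is a linear isomorphism), and $\varphi(e_i)\varphi(e_j) = \varphi(e_i e_j) = \varphi(0) = 0$ for $i \neq j$. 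Applied to $\varphi = \overline{f}$ and $E = A/\Ker(f)$, this shows that $\mathrm{Im}(f)$ is an evolution algebra.

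I do not anticipate a genuine obstacle; the only point to be careful about is that one should not try to describe a natural basis of $\mathrm{Im}(f)$ directly as the image of a natural basis of $A$, because $f$ need not be injective and vectors of a natural basis may collapse. The clean route is to pass through the quotient, where Lemma \ref{quotient} already does the work, and then transport the natural basis across the isomorphism $\overline{f}$.
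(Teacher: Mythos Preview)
Your proof is correct and follows essentially the same approach as the paper: apply Lemma \ref{quotient} to the quotient $A/\Ker(f)$ and then transfer the evolution-algebra structure to $\mathrm{Im}(f)$ via the first isomorphism theorem. The paper's proof is terser (it leaves the verification that a natural basis is preserved under algebra isomorphism implicit), but the argument is the same.
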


\begin{proof} By Lemma \ref{quotient}, $A/\Ker (f) $ is an evolution algebra.
Apply that $\Ker (f) $ is an ideal of $A$ and ${\rm Im}(f)$ is
isomorphic to the evolution algebra $A/\Ker (f)$.
\end{proof}

\medskip


\subsection{Non-degenerate evolution algebras}

In what follows we study the notion of non-degenerate evolution algebra and introduce a radical for an arbitrary evolution algebra such that the quotient by this ideal is a non-degenerate evolution algebra.

\begin{definition}\label{nodegenerada}
\rm
An evolution algebra $A$ is \emph{non-degenerate} if it
 has a natural basis $B=\{e_{i}\ \vert \ i\in \Lambda \}$ such that
 $e_{i}^{2}\neq 0$  for every $i\in \Lambda.$
\end{definition}

\begin{remark}
\rm

That a genotype $e_i$ in an evolution algebra $A$ satisfies $e_i^2=0$ means biologically that is not able to have descendents. By Corollary \ref{MVM} the evolution algebra $A$ will be non-degenerate if all of its genotypes can reproduce.
\end{remark}

In Corollary \ref{MVM} we will show that non-degeneracy does not depend on the considered natural basis. Our proof will rely on the well-known notion of annihilator.

For a commutative algebra $A$ we define its \emph{annihilator}, denoted by
${\rm ann}(A)$, as
$${\rm ann}(A):=\{x\in A \ \vert \ x A = 0\}.$$

\begin{proposition}\label{degen}
Let $A$ be an evolution algebra and $B=\{e_i \ \vert \ i\in \Lambda\}$ a natural basis. Denote by
$\Lambda_0(B):=\{i\ \in \Lambda\ \vert \ e_i^2 =0\}$.
Then:
\begin{enumerate}[\rm (i)]
 \item ${\rm ann}(A)={\rm lin}\{e_i \in B \ \vert \ i\in \Lambda_0(B)\}$.
 \item  ${\rm ann}(A)=0$ if and only if $\Lambda_0=\emptyset$.
 \item ${\rm ann}(A)$ is an evolution ideal of $A$.
 \item $\vert \Lambda_0(B) \vert = \vert\Lambda_0(B') \vert$ for every natural basis $B'$ of $A$.
\end{enumerate}
\end{proposition}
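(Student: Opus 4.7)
The plan is to prove (i) first by a direct coordinate computation, and then to obtain (ii), (iii) and (iv) as formal consequences, using that $\mathrm{ann}(A)$ is defined intrinsically (without reference to any basis).

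For (i), I would take an arbitrary $a=\sum_{i\in\Lambda}\alpha_i e_i\in A$ and use bilinearity of the product to reduce the condition $aA=0$ to $ae_j=0$ for every $j\in\Lambda$. Since $B$ is natural, only the $j$-th term survives and one obtains $ae_j=\alpha_j e_j^2$. Hence $a\in\mathrm{ann}(A)$ iff $\alpha_j e_j^2=0$ for every $j$, which in turn is equivalent to $\alpha_j=0$ whenever $j\notin\Lambda_0(B)$. This is exactly the statement $a\in\mathrm{lin}\{e_i\mid i\in\Lambda_0(B)\}$. The only subtlety is that one must test annihilation \emph{index by index}: the family $\{e_j^2\mid j\notin\Lambda_0(B)\}$ is in general linearly dependent, so one cannot work with a single sum $\sum_j\alpha_j e_j^2=0$; but since each $ae_j$ only involves $\alpha_j$, this issue is side-stepped by the basis-testing reduction.

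Part (ii) follows immediately from (i): an empty index set gives the zero subspace, and conversely any $i\in\Lambda_0(B)$ exhibits $e_i$ as a nonzero element of $\mathrm{ann}(A)$.

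For (iii), once (i) is established, the set $\{e_i\mid i\in\Lambda_0(B)\}$ is a basis of $\mathrm{ann}(A)$, and it is automatically natural because it is a subset of the natural basis $B$, so its pairwise products already vanish. That $\mathrm{ann}(A)$ is an ideal is a standard verification in any commutative algebra, so $\mathrm{ann}(A)$ is an evolution ideal.

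Part (iv) is the substantive-sounding statement, but after (i) it is immediate: by (i) the cardinality $|\Lambda_0(B)|$ equals $\dim_{\mathbb K}\mathrm{ann}(A)$, and the right-hand side does not depend on the chosen natural basis. Applying (i) again to another natural basis $B'$ yields $|\Lambda_0(B')|=\dim_{\mathbb K}\mathrm{ann}(A)$, so the two cardinalities coincide. I do not expect any real obstacle; the crux of the argument is entirely concentrated in the coordinate computation of (i), and the main conceptual point (that non-degeneracy is basis-independent, which is really Corollary \ref{MVM}) is simply a restatement of (ii) together with the basis-independence of $\dim\mathrm{ann}(A)$.
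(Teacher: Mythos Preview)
Your proposal is correct and follows essentially the same route as the paper: establish (i), then read off (ii), (iii) from it, and obtain (iv) from the basis-independent identity $|\Lambda_0(B)|=\dim_{\mathbb K}\mathrm{ann}(A)$. The only difference is that the paper quotes \cite[Lemma 2.7]{EL} for (i), whereas you supply the direct coordinate computation $ae_j=\alpha_j e_j^2$; your self-contained argument is a perfectly good substitute for that citation.
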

\begin{proof}
By \cite[Lemma 2.7]{EL} we have that ${\rm ann}(A)={\rm lin}\{e_i \in B \ \vert \ i\in \Lambda_0\}$. This implies (i) and (iii). Item (ii) is obvious from (i) and (iv) follows from the fact that $\vert \Lambda_0(B)\vert= {\rm dim}({\rm ann}(A))$.
\end{proof}

From now on, for simplicity, we will write  $\Lambda_0$ instead of $\Lambda_0(B)$.

\begin{corollary}\label{MVM}
An evolution algebra $A$ is non-degenerate if and only if $\rm{ann}(A)=0$. Consequently, the definition of non-degenerate evolution algebra does not depend on the considered natural basis.
\end{corollary}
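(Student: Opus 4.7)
The plan is to read off both directions directly from Proposition \ref{degen}. Fix a natural basis $B=\{e_i \mid i\in\Lambda\}$ of $A$, with the associated set $\Lambda_0(B)=\{i\in\Lambda \mid e_i^2=0\}$. For the forward implication, suppose $A$ is non-degenerate. By Definition \ref{nodegenerada} there exists some natural basis $B$ for which $e_i^2\neq 0$ for every $i\in\Lambda$, i.e.\ $\Lambda_0(B)=\emptyset$. Proposition \ref{degen}(ii) then gives $\mathrm{ann}(A)=0$.

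For the converse, assume $\mathrm{ann}(A)=0$ and let $B=\{e_i \mid i\in\Lambda\}$ be \emph{any} natural basis of $A$. By Proposition \ref{degen}(i),
\[
0=\mathrm{ann}(A)=\mathrm{lin}\{e_i \mid i\in \Lambda_0(B)\}.
\]
Since the elements of a basis are linearly independent, the only way this span can vanish is if $\Lambda_0(B)=\emptyset$, which is precisely the statement that $e_i^2\neq 0$ for every $i\in\Lambda$. Hence $A$ is non-degenerate, witnessed by $B$.

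For the final clause, observe that the characterising condition $\mathrm{ann}(A)=0$ makes no reference to a basis, so the equivalence just established proves that non-degeneracy is an intrinsic property of $A$. In fact the converse argument shows something stronger: once $\mathrm{ann}(A)=0$, \emph{every} natural basis $B$ of $A$ satisfies $\Lambda_0(B)=\emptyset$, so the existential quantifier in Definition \ref{nodegenerada} can be replaced by a universal one. There is no real obstacle here; the substantive content has already been packaged into Proposition \ref{degen} (in particular, part (i), which ties $\mathrm{ann}(A)$ to the square-zero basis vectors, and optionally part (iv), which could alternatively be invoked to transport $\Lambda_0=\emptyset$ between natural bases).
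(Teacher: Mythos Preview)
Your proof is correct and follows essentially the same route as the paper: both arguments reduce the equivalence to Proposition~\ref{degen}, using that $A$ is non-degenerate iff $\Lambda_0(B)=\emptyset$ iff $\mathrm{ann}(A)=0$. You spell out the two directions a bit more explicitly (invoking part~(i) for the converse rather than just citing~(ii)), but the content is the same.
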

\begin{proof}
Since $A$ is non-degenerate if and only if $\Lambda_0=\emptyset$, the result  follows directly from  Proposition \ref{degen} (ii).
\end{proof}

\begin{remark}\label{ContraEjemdegen}
\rm
Let $A$ be an evolution algebra and $B=\{e_i \ \vert \ i\in \Lambda\}$ a natural basis. Denote by
 $\Lambda_1:=\{i\ \in \Lambda\ \vert \ e_i^2 \neq 0\}$.
Then:
\begin{enumerate}
\item[\rm (i)]  $A_1:={\rm lin}\{e_i \in B \ \vert \ i\in \Lambda_1\}$ is not necessarily a subalgebra of $A$.
\item[\rm (ii)]  $A/{\rm ann}(A)$ is not necessarily a non-degenerate evolution algebra.
\end{enumerate}
\end{remark}
Indeed, for an example concerning (i), consider the evolution algebra $A$ with natural basis $\{e_1, e_2\}$ and product given by: $e_1^2=0$, $e_2^2 = e_1 + e_2$. Then ${\rm ann}(A)= {\rm lin}\{e_1\}$ and $A_1= {\rm lin}\{e_2\}$, which is not a subalgebra of $A$ as $e_2^2=e_1+e_2\notin A_1$.

To see (ii), let $A$ be an evolution algebra with natural basis $ B=\{e_1, e_2, e_3, e_4, e_5, e_6\}$ such that
$e_1^2=e_2^2=e_3^2=0$, $e_4^2=e_1+e_2$, $e_5^2=e_2$ and $e_6^2=e_2+e_5$. Then
${\rm ann}(A)= {\rm lin}\{e_1, e_2, e_3\} $ and $A/{\rm ann}(A)$ is an evolution algebra generated (as a vector space) by $\overline{e_4}$, $\overline{e_5}$ and $\overline{e_6}$ and has non-zero annihilator. In fact, ${\rm ann}(A/{\rm ann}(A))= {\rm lin}\{\overline{e_4}, \overline{e_5}\}$.
\medskip

To get non-degenerate evolution algebras, we introduce a radical for an evolution algebra $A$, denoted by ${\rm rad}(A)$, in such a way that ${\rm rad}(A/{\rm rad}(A))=\overline 0$, and so $A/{\rm rad}(A)$ is non-degenerate.

\begin{definition}\label{absorcion}
\rm
Let $I$ be an ideal of an evolution algebra $A$. We will say that $I$ has the \emph{absorption property} if  $xA\subseteq I$ implies $x\in I$.
\end{definition}

\begin{remark}
\rm

Biologically, an ideal $I$ has the absorption property if whenever we consider one single individual $x$ of $A$ such that its descendence produces only individuals inside $I$, then the initial individual $x$ belongs to $I$.
\end{remark}

\begin{example}
\rm
Consider the evolution algebra $A$ with natural basis $\{e_1, e_2, e_3\}$ and product given by: $e_1^2= e_2$, $e_2^2=e_1$ and $e_3^2=e_3$. Let $I$ be the ideal of $A$ with basis $\{e_1, e_2\}$. It is not difficult to see that $I$ has the absorption property.
\end{example}

\begin{lemma}\label{ab}
 An ideal $I$ of an evolution algebra $A$ has the absorption property if and only if ${\rm ann}(A/I)=\overline 0$.
\end{lemma}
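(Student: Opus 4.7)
The plan is to observe that both conditions are straightforward reformulations of the statement ``for $x\in A$, $xA\subseteq I$ forces $x\in I$'', once we pass to the quotient $A/I$. The machinery needed is already in place: by Lemma \ref{quotient}, $A/I$ is itself an evolution algebra (in particular commutative), so its annihilator is well defined in the sense introduced before Proposition \ref{degen}.

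First I would unfold the definition of ${\rm ann}(A/I)$. An element $\overline{x}\in A/I$ lies in ${\rm ann}(A/I)$ if and only if $\overline{x}\cdot\overline{a}=\overline{0}$ for every $a\in A$; equivalently, $xa\in I$ for every $a\in A$, i.e.\ $xA\subseteq I$. Thus the coset $\overline{x}$ is annihilating if and only if the representative $x$ satisfies the hypothesis of the absorption property.

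With this translation in hand, the two implications are immediate. For the forward direction, assume $I$ has the absorption property and pick $\overline{x}\in{\rm ann}(A/I)$. By the equivalence above, $xA\subseteq I$, so absorption gives $x\in I$, that is $\overline{x}=\overline{0}$; therefore ${\rm ann}(A/I)=\overline{0}$. For the converse, assume ${\rm ann}(A/I)=\overline{0}$ and take $x\in A$ with $xA\subseteq I$. The same equivalence yields $\overline{x}\in{\rm ann}(A/I)=\overline{0}$, whence $x\in I$, so $I$ has the absorption property.

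There is really no hard part here; the only thing one must be slightly careful about is that the annihilator was originally defined for a commutative algebra, and this is why invoking Lemma \ref{quotient} at the outset (to know that $A/I$ is again a commutative evolution algebra) is the right opening move. The rest is a one-line chase through the quotient.
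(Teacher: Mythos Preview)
Your proof is correct and follows essentially the same route as the paper's: both directions are obtained by translating the condition $xA\subseteq I$ into $\overline{x}\in\mathrm{ann}(A/I)$ and reading off the equivalence. Your explicit invocation of Lemma~\ref{quotient} is a nice clarification, but otherwise the arguments coincide.
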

\begin{proof}
Assume first that $I$ has the absorption property. Take $\overline a\in {\rm ann}(A/I)$. Then $\overline a \ A/I = \overline 0$, so that $aA \subseteq I$. This implies $a\in I$, that is, $\overline a = \overline 0$.
For the converse, use that $a A \subseteq I$ implies $\overline a \ A/I= \overline 0$, that is, $\overline a \in {\rm ann}(A/I) = \overline 0$ and hence $a \in I$.
\end{proof}

\begin{lemma}\label{AbsExt}
Let $I$ be a non-zero ideal of an evolution algebra $A$. Denote by $B=\{e_i \ \vert \ i \in \Lambda\}$ a natural basis of $A$.
If $I$ has the absorption property, then there exists $B_1\subseteq B$ such that $B_1$ is a natural basis of $I$. In particular,  $I$ is an evolution ideal and has the extension property.
\end{lemma}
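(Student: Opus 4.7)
My plan is to produce a natural basis of $I$ by selecting from the ambient basis $B$. Concretely, I will set
$$\Lambda_I := \{i \in \Lambda \mid e_i \in I\} \qquad \text{and} \qquad B_1 := \{e_i \mid i \in \Lambda_I\}.$$
Since $B_1 \subseteq B$, linear independence and the orthogonality $e_i e_j = 0$ for distinct $i,j \in \Lambda_I$ come for free, and $B_1 \subseteq I$ by construction. So the only real task will be to show that $B_1$ spans $I$; once this is established, $B_1$ is a natural basis of $I$ which trivially extends to $B$, giving simultaneously that $I$ is an evolution ideal and that it has the extension property.

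The key step will be the following claim: if $x = \sum_{i \in F} \alpha_i e_i \in I$ with $F$ finite and every $\alpha_i$ nonzero, then $e_i \in I$ for every $i \in F$. To prove it I will fix $j \in F$ and multiply $x$ by $e_j$; the orthogonality $e_i e_j = 0$ for $i \neq j$ collapses the sum to $x e_j = \alpha_j e_j^2$, which lies in $I$ because $I$ is an ideal. Since $\alpha_j \neq 0$, this yields $e_j^2 \in I$. Using the same orthogonality once more, for any $a = \sum_k \beta_k e_k \in A$ I obtain $e_j a = \beta_j e_j^2 \in I$, so $e_j A \subseteq I$. The absorption hypothesis on $I$ then forces $e_j \in I$, as desired.

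I do not expect a serious obstacle here: the identity $e_i e_j = \delta_{ij} e_j^2$ is exactly what lets me isolate the ``$e_j^2$-coefficient'' of $x e_j$, and the absorption property is tailor-made to promote the containment $e_j A \subseteq I$ to membership $e_j \in I$. The non-vanishing of $I$ plays no role beyond ensuring $B_1 \neq \emptyset$ (as applying the claim above to any nonzero $x \in I$ immediately produces at least one $e_i \in I$).
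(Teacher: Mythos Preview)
Your proof is correct. The key computation---multiplying an arbitrary $x\in I$ by $e_j$ to isolate $e_j^2\in I$, then observing $e_jA=\mathbb{K}e_j^2\subseteq I$ and invoking absorption---is exactly what is needed, and your claim immediately gives that $I$ is the span of those $e_i$ that lie in $I$.

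Your route differs from the paper's in packaging rather than substance. The paper passes to the quotient $A/I$, selects $\Lambda_2\subseteq\Lambda$ so that $\{\overline{e_i}\mid i\in\Lambda_2\}$ is a natural basis of $A/I$, and then argues that for $i\in\Lambda_1:=\Lambda\setminus\Lambda_2$ one has $\overline{e_i}\,(A/I)=\overline{0}$ (since $\overline{e_i}\,\overline{e_j}=\overline{e_ie_j}=\overline{0}$ for every $j\in\Lambda_2$), hence $\overline{e_i}\in\mathrm{ann}(A/I)=\overline{0}$ by Lemma~\ref{ab}, so $e_i\in I$. Unwinding the definitions, the step ``$\overline{e_i}\,(A/I)=\overline{0}$'' is precisely your statement $e_iA\subseteq I$, and the appeal to $\mathrm{ann}(A/I)=\overline{0}$ is just the absorption property rephrased via Lemma~\ref{ab}. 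So both arguments reach the same set $B_1$ by the same mechanism; your version is simply more direct, avoiding the detour through the quotient and the auxiliary lemma, at no cost.
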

\begin{proof}
By Lemma \ref{quotient}, we have that $A/I$ is an evolution algebra. Let $\Lambda_2\subseteq \Lambda$ be such that $\overline B: =\{\overline e_i \ \vert \ i \in \Lambda_2\}$ is a natural basis of $A/I$. Denote by $\Lambda_1= \Lambda \setminus \Lambda_2$, and let $B'=\{e_i \ \vert\ i \in \Lambda_1\}$. We claim that $B'$ is a natural basis of $I$.

Take $e_i\in B'$. Then ${\overline {e_i}}\ A/I=\overline 0$; this means ${\overline {e_i}} \in {\rm ann}(A/I)$, which is zero by Lemma \ref{ab}. This implies $e_i \in I$. To see that  $I$ is generated by $B'$, take $y\in I$ and write $y = \sum_{i\in \Lambda_1}k_i e_i + \sum_{i\in \Lambda_2}k_i e_i$ for some $k_i \in \mathbb K$. Taking classes in this identity we get
$\overline 0 = \overline y =  \sum_{i\in \Lambda_2}k_i {\overline{e_i}}\in {\rm lin }\ \overline B$. Since $\overline B$ is a basis, all the $k_i$ (with $i\in \Lambda_2$) must be zero, implying $y = \sum_{i\in \Lambda_1}k_i e_i \in {\rm lin }\ B'$.
\end{proof}

\begin{remark}\label{reciAbsExt}
\rm
The converse of Lemma \ref{AbsExt} is not true. If we take the evolution algebra $A$ with natural basis $\{e_1, e_2\}$ and product given by $e_1^2=e_1$ and $e_2^2=e_1$, then $I= \mathbb K e_1$ is an evolution ideal having the extension property but it has not the absorption property because $e_2 A \subseteq I$ and $e_2\notin I$.
\end{remark}

It is not difficult to prove that the intersection of any family of ideals with the absorption property is again an ideal with the absorption property.

\begin{definition}\label{radical}
\rm
We define the \emph{absorption radical} of  an evolution algebra $A$ as the intersection of all the ideals of $A$ having the absorption property. Denote it by ${\rm rad}(A)$. It is clear that the radical is the smallest ideal of $A$ with the absorption property.
\end{definition}

\begin{proposition}\label{Relacion}
Let $A$ be an evolution algebra. Then ${\rm rad}(A)= 0$ if and only if ${\rm ann}(A)=0$ if and only if $A$ is non-degenerate.
\end{proposition}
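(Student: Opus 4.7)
The plan is to chain the two equivalences by passing through the characterization of the absorption property in Lemma \ref{ab}, and then invoke Corollary \ref{MVM} for the final link with non-degeneracy.

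My first move would be to observe a trivial but pivotal special case of Lemma \ref{ab}: the zero ideal $0$ of $A$ has the absorption property if and only if ${\rm ann}(A)=0$. Indeed, unwinding the definition, $0$ absorbs iff $xA\subseteq 0$ forces $x=0$, which is exactly ${\rm ann}(A)=0$; alternatively, this is Lemma \ref{ab} applied to $I=0$, since ${\rm ann}(A/0)={\rm ann}(A)$.

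From this observation, both implications of ${\rm rad}(A)=0 \Leftrightarrow {\rm ann}(A)=0$ drop out by a one-line argument. If ${\rm ann}(A)=0$, then $0$ is an ideal with the absorption property, and since by Definition \ref{radical} the radical is the smallest such ideal, ${\rm rad}(A)\subseteq 0$, whence ${\rm rad}(A)=0$. Conversely, if ${\rm rad}(A)=0$, then (using that intersections of absorbing ideals are absorbing, as noted before Definition \ref{radical}) the zero ideal itself has the absorption property, so ${\rm ann}(A)=0$ by the observation above.

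Finally, the equivalence ${\rm ann}(A)=0 \Leftrightarrow A$ is non-degenerate is precisely Corollary \ref{MVM}, so the full chain is established. There is no real obstacle here: the statement is essentially a formal consequence of how ${\rm rad}(A)$ was defined, together with Lemma \ref{ab} and Corollary \ref{MVM}; the only thing to be slightly careful about is the direction of the inclusion when invoking the ``smallest ideal'' property of the radical.
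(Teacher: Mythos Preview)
Your proof is correct and essentially the same as the paper's. The only cosmetic difference is in the direction ${\rm rad}(A)=0 \Rightarrow {\rm ann}(A)=0$: the paper states the general containment ${\rm ann}(A)\subseteq {\rm rad}(A)$ (which follows from ${\rm rad}(A)$ being absorbing: $xA=0\subseteq {\rm rad}(A)$ forces $x\in{\rm rad}(A)$), while you phrase it as ``${\rm rad}(A)=0$ is absorbing, hence ${\rm ann}(A)=0$''---but these are the same argument.
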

\begin{proof}
Note that $ {\rm ann}(A) \subseteq {\rm rad}(A)$, hence ${\rm rad}(A)= 0$ implies ${\rm ann}(A)=0$.
On the other hand, if ${\rm ann}(A)=0$, then $0$ is an ideal having the absorption property. This implies ${\rm rad}(A)=0$ as the radical of $A$ is the intersection of all ideals having the absorption property. Finally,  the assertion ${\rm ann}(A)=0$ if and only if $A$ is non-degenerate follows from Corollary \ref{MVM}.
\end{proof}

\begin{corollary} Let $I$ be an ideal of an evolution algebra $A$. Then $I$ has the absorption property if and only if
${\rm rad}(A/I) = \overline 0$. In particular ${\rm rad}(A/{\rm rad}(A)) = \overline 0$, that is, $A/{\rm rad}(A)$ is a non-degenerate evolution algebra.
\end{corollary}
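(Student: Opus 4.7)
The plan is to chain two already-established equivalences applied to the quotient evolution algebra $A/I$ (which is an evolution algebra by Lemma \ref{quotient}). First, Lemma \ref{ab} characterizes the absorption property of $I$ as the vanishing of ${\rm ann}(A/I)$. Second, Proposition \ref{Relacion}, applied to the evolution algebra $A/I$ in place of $A$, gives the equivalence between ${\rm ann}(A/I) = \overline 0$ and ${\rm rad}(A/I) = \overline 0$. Concatenating these two biconditionals immediately yields the first assertion: $I$ has the absorption property if and only if ${\rm rad}(A/I) = \overline 0$.

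For the \emph{in particular} part, I would use the observation recorded just before Definition \ref{radical} that the intersection of any family of ideals with the absorption property again has the absorption property. Since ${\rm rad}(A)$ is by definition the intersection of all such ideals, it is itself an ideal with the absorption property. Specializing the equivalence just proved to the case $I = {\rm rad}(A)$ then gives ${\rm rad}(A/{\rm rad}(A)) = \overline 0$. A final application of Proposition \ref{Relacion} to the evolution algebra $A/{\rm rad}(A)$ translates this into the statement that $A/{\rm rad}(A)$ is non-degenerate.

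No step is genuinely hard: the proof is essentially a bookkeeping exercise combining Lemma \ref{ab}, Lemma \ref{quotient}, Proposition \ref{Relacion}, and the intersection-stability of the absorption property. The only point that deserves attention, and which one might call the main (mild) obstacle, is ensuring that Proposition \ref{Relacion} is genuinely applicable to the quotient: this is immediate because $A/I$ inherits the structure of an evolution algebra by Lemma \ref{quotient}, so all the notions (${\rm ann}$, ${\rm rad}$, non-degeneracy) used in Proposition \ref{Relacion} make sense there.
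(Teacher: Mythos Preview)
Your proof is correct and follows essentially the same approach as the paper: both chain Lemma \ref{quotient}, Lemma \ref{ab}, and Proposition \ref{Relacion} (applied to $A/I$) to get the main equivalence, and then specialize to $I = {\rm rad}(A)$ using that ${\rm rad}(A)$ itself has the absorption property.
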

\begin{proof}
By Lemmas \ref{quotient} and  \ref{ab}, and by Proposition \ref{Relacion} it follows
that $I$ has the absorption property if and only if $\mathrm{ann}(A/I)=0$
(and hence $A/I$ is a non-degenerate evolution algebra), equivalently $
\mathrm{rad}(A/I)=\overline{0}$. Since $\mathrm{rad}(A)$ is an ideal with
the absortion property, the particular case about $A/{\rm rad}(A)$  follows immediately.
\end{proof}

\medskip

We recall that an arbitrary algebra $A$ is \emph{semiprime} if there are no non-zero ideals $I$ of $A$ such that $I^2=0$, and is  \emph{nondegenerate} if $a(Aa)=0$ for some $a\in A$ implies $a=0$. Note that this is a different definition than that of non-degenerate (given in Definition \ref{nodegenerada}).
Although these definitions (in spite of the hyphen) can be confused, they appear with those names in the literature, and this is the reason because of which we compare them.

In the associative case, semiprimeness and nondegeneracy are equivalent concepts. We close this subsection by relating non-degenerate evolution algebras (in the meaning of Definition \ref{nodegenerada}) with semiprime and nondegenerate evolution algebras.
In fact, we obtain the following additional information.

\begin{proposition}\label{nilpotenciaIdealesEvolucion}
Let $A$ be an evolution algebra with non-zero product. Consider the following conditions:
\begin{enumerate}[\rm (i)]
\item\label{iv} $A$ is nondegenerate.
\item\label{i} $A$ is semiprime.
\item\label{ii} $A$ has no non trivial evolution ideals of zero square.
\item\label{iii} $A$ is non-degenerate.
\end{enumerate}
Then: {\rm(\ref{iv})} $\Rightarrow$  {\rm(\ref{i})} $\Leftrightarrow$  {\rm(\ref{ii})}  $\Rightarrow$ {\rm(\ref{iii})}.
\end{proposition}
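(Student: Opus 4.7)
I will establish the three implications (\ref{iv})$\Rightarrow$(\ref{i}), (\ref{i})$\Leftrightarrow$(\ref{ii}), and (\ref{ii})$\Rightarrow$(\ref{iii}) separately, leaning on two elementary observations about evolution algebras that I record first. Observation~(a): any ideal $I$ of $A$ satisfying $I^{2}=0$ is automatically an evolution ideal, because for any vector space basis $\{v_\alpha\}$ of $I$ one has $v_\alpha v_\beta \in I^{2}=0$, so $\{v_\alpha\}$ is already a natural basis of $I$. Observation~(b): the annihilator ${\rm ann}(A)$ satisfies ${\rm ann}(A)^{2}\subseteq {\rm ann}(A)\cdot A=0$ and is an evolution ideal of $A$ by Proposition~\ref{degen}.

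For (\ref{iv})$\Rightarrow$(\ref{i}), I take any ideal $I$ of $A$ with $I^{2}=0$; for each $a\in I$, the ideal property and commutativity give $a(Aa)\subseteq aI\subseteq I^{2}=0$, so nondegeneracy of $A$ forces $a=0$ and hence $I=0$. The implication (\ref{i})$\Rightarrow$(\ref{ii}) is immediate since evolution ideals are in particular ideals. For (\ref{ii})$\Rightarrow$(\ref{i}), if $I$ were a non-zero ideal of $A$ with $I^{2}=0$, the hypothesis that $A$ has non-zero product would prevent $I=A$, so $I$ would be non-trivial, and by observation~(a) it would be an evolution ideal, contradicting~(\ref{ii}). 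Finally, for (\ref{ii})$\Rightarrow$(\ref{iii}) I argue by contrapositive: if $A$ is degenerate then ${\rm ann}(A)\neq 0$, the non-zero product hypothesis rules out ${\rm ann}(A)=A$, and observation~(b) then produces a non-trivial evolution ideal of zero square.

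There is no real obstacle in the argument. The only conceptual point worth highlighting is observation~(a), which collapses the distinction between ``ideal of zero square'' and ``evolution ideal of zero square'' and thereby makes the equivalence (\ref{i})$\Leftrightarrow$(\ref{ii}) essentially formal. The non-zero product hypothesis on $A$ plays only the auxiliary role of excluding the pathological case $I=A$ or ${\rm ann}(A)=A$ (i.e.\ $A^{2}=0$), in which (\ref{ii}) would hold vacuously even though $A$ itself would still be degenerate.
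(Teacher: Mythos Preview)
Your proof is correct and follows essentially the same approach as the paper's: the key observation that any ideal of zero square is automatically an evolution ideal yields (\ref{i})$\Leftrightarrow$(\ref{ii}), and the annihilator furnishes the evolution ideal needed for (\ref{ii})$\Rightarrow$(\ref{iii}). If anything, you are more careful than the paper in explicitly invoking the non-zero product hypothesis to rule out $I=A$ and ${\rm ann}(A)=A$, which the paper's proof leaves implicit.
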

\begin{proof}
 {\rm(\ref{iv})} $\Rightarrow$  {\rm(\ref{i})}  is well-known for any (evolution or not) algebra.

{\rm(\ref{i})} $\Rightarrow$  {\rm(\ref{ii})} is a tautology.

{\rm(\ref{ii})}  $\Rightarrow$ {\rm(\ref{i})} follows because every ideal $I$ such that $I^2=0$ is an evolution ideal.

{\rm(\ref{ii})} $\Rightarrow$  {\rm(\ref{iii})}. By Proposition \ref{degen}, the annihilator of $A$ is an evolution ideal. Since it has zero square, by the hypothesis, it must be zero. By Proposition \ref{Relacion}
\end{proof}

\begin{remark}
\rm
The implications {\rm(\ref{i})} $\Rightarrow$  {\rm(\ref{iv})} and {\rm(\ref{iii})} $\Rightarrow$  {\rm(\ref{ii})} in Proposition \ref{nilpotenciaIdealesEvolucion} do not hold in general.

To see that  {\rm(\ref{i})} $\not\Rightarrow$  {\rm(\ref{iv})}, consider the evolution algebra $A$ with natural basis $\{e_1, e_2\}$ and product given by $e_1^2 =e_2$ and $e_2^2=e_1+e_2$. Note that $e_1(Ae_1)=0$. Suppose that $I$ is a non-zero ideal such that $I^2=0$. Then it has to be proper and one dimensional because the dimension of $A$ is 2. Therefore $I$ has to be generated (as a vector space) by one element, say, $u=\alpha e_1+ \beta e_2$ for some $\alpha, \beta \in \mathbb K$. Since $0=u^2=\alpha^2 e_2 + \beta^2 (e_1 + e_2)= \beta^2 e_1 +  (\alpha^2 + \beta^2) e_2$ it follows that $\alpha = \beta = 0$, a contradiction.

 To show that {\rm(\ref{iii})} $\not\Rightarrow$  {\rm(\ref{ii})}, let $A$ be the evolution algebra with natural basis $B=\{e_{1},e_{2},e_{3}\}$
and product given by $e_{1}^{2}=e_{2}+e_{3}=e_{2}^{2}$ and $e_{3}^{2}=-e_{2}-e_{3}$.
Then the ideal $I$
 generated by $e_{2}+e_{3}$ is such that $I^{2}=0$ and nevertheless $A$ is non-degenerate.
\end{remark}

\subsection{The graph associated to an evolution algebra}\label{graphAsoc}

We conclude this section by associating a graph to every evolution algebra after fixing a natural basis. This will be very useful because it will allow to visualize when an evolution algebra is reducible or not as well as the results in Subsection \ref{Descomposicion} to get the optimal direct-sum decomposition.

A \emph{directed graph} is a 4-tuple $E=(E^0, E^1, r_E, s_E)$ consisting of two disjoint sets $E^0$, $E^1$ and two maps
$r_E, s_E: E^1 \to E^0$. The elements of $E^0$ are called the \emph{vertices} of $E$ and the elements of $E^1$ the edges of $E$ while for
$f\in E^1$ the vertices $r_E(f)$ and $s_E(f)$ are called the \emph{range} and the \emph{source} of $f$, respectively. If there is no confusion with respect to the graph we are considering, we simply write $r(f)$ and $s(f)$.

 If $s^{-1}(v)$ is a finite set for every $v\in E^0$, then the graph is called \emph{row-finite}. If
$E^0$ is finite and $E$ is row-finite, {then} $E^1$ must necessarily be finite as well; in this case we
say simply that $E$ is \emph{finite}.

\begin{example}\label{ejemploGrafo}
\rm
Consider the following graph $E$:
$$
\xymatrix{
 & \bullet_{v_2} & & \\
 \bullet_{v_1}\ar[ur]^{f_1} \ar[r]^{f_2} &  \bullet_{v_3}\ar@/^{-10pt}/ [r]_{f_4}&  \bullet_{v_4}  \ar@/^{-10pt}/ [l]_{f_3}
}
$$
Then $E^0=\{v_1, v_2, v_3, v_4\}$ and $E^1=\{f_1, f_2, f_3, f_4\}$. Examples of source and range are:  $s(f_3)=v_4=r(f_4)$.
\end{example}

A vertex which emits no edges is called a \emph{sink}. A vertex which does not receive any vertex is called a \emph{source}.
A \emph{path} $\mu$ in a graph $E$ is a finite sequence of edges $\mu=f_1\dots f_n$
such that $r(f_i)=s(f_{i+1})$ for $i=1,\dots,n-1$. In this case, $s(\mu):=s(f_1)$ and $r(\mu):=r(f_n)$ are the
\emph{source} and \emph{range} of $\mu$, respectively, and $n$ is the \emph{length} of $\mu$. This fact will be denoted by
$\vert \mu \vert = n$. We also say that
$\mu$ is \emph{a path from $s(f_1)$ to $r(f_n)$} and denote by $\mu^0$ the set of its vertices, i.e.,
$\mu^0:=\{s(f_1),r(f_1),\dots,r(f_n)\}$. On the other hand, by $\mu^1$ we denote the set of edges appearing in $\mu$, i.e., $\mu^1:=\{f_1,\dots, f_n\}$.
We view the elements of $E^{0}$ as paths of length $0$. The set of all paths of a graph $E$ is denoted by ${\rm Path}(E)$.
 Let $\mu = f_1 f_2 \cdots f_n \in {\rm Path}(E)$.
  If  $n = \vert\mu\vert\geq 1$, and if
$v=s(\mu)=r(\mu)$, then $\mu$ is called a \emph{closed path based at $v$}.
 If
$\mu = f_1f_2 \cdots f_n$ is a closed path based at $v$ and $s(f_i)\neq s(f_j)$ for
every $i\neq j$, then $\mu$ is called a \emph{cycle based at} $v$ or simply a \emph{cycle}.

Given a graph $E$ for which every vertex is a finite emitter, the \emph{adjacency matrix} is the matrix $Ad_E=(a_{ij}) \in {\mathbb Z}^{(E^0\times E^0)}$
given by $a_{ij} = \vert\{\text{edges from }i\text{ to }j\}\vert$.

A graph $E$ is said to satisfy  \emph{Condition} (Sing) if among two vertices of $E^0$ there is at most one edge.

There are different ways in which a graph can be associated to an evolution algebra. For instance, we could have considered  weighted evolution graphs (these are  graphs for which
 every edge has associated a weight $\omega _{ij}$, determined by the corresponding structure constant).
 In this way every evolution algebra (jointly with a fixed natural basis) has associated a unique weighted graph, and viceversa. However,  for our purposes we don't need to pay attention to the weights; we only need to take into account if two vertices are connected or not (and in which direction). This is the reason because of which, in order to simplify our approach, it is enough to consider graphs as we do in the following definition.

\begin{definition}\label{EvolGrafo}
\rm
Let $ B=\{e_i\ \vert \ i\in \Lambda\}$ be a natural basis of an evolution algebra $A$ and $M_B=(\omega_{ji})\in  {\rm CFM}_\Lambda(\mathbb K)$ be its structure matrix. Consider the matrix
$P^t=(p_{ji})\in  {\rm CFM}_\Lambda(\mathbb K)$ such that $p_{ji}=0$ if $\omega_{ji}=0$ and $p_{ji}=1$ if $\omega_{ji}\neq 0$.
The \emph{graph associated to the evolution algebra} $A$ (relative to the basis $B$), denoted by $E_A^B$ (or simply by $E$ if the algebra $A$ and the basis $B$ are understood) is the graph whose adjacency matrix is $P= (p_{ij})$.
\end{definition}

Note that the graph associated to an evolution algebra depends on the selected basis. In order to simplify the notation, and if there is no confusion, we will avoid to refer to such a basis.

\begin{example}\label{cambio}
\rm
Let $A$ be the evolution algebra with natural basis $B=\{e_1, e_2\}$ and product given by $e_1^2=e_1+e_2$ and $e_2^2=0$. Consider the natural basis $B'=\{e_1+e_2, e_2\}$. Then the graphs associated to the bases $B$ and $B'$ are, respectively:
$$
E: \quad \xymatrix{
\bullet_{v_1}  \uloopr{}  \ar[r] &  \bullet_{v_2}
}
\quad \quad \quad
F:\quad \xymatrix{
\bullet_{w_1}  \uloopr{}  &  \bullet_{w_2}
}
$$

\end{example}

\begin{example}\label{primerEjemplo}
\rm
Let $A$ be the evolution algebra with natural basis $B=\{e_1, e_2, e_3, e_4\}$ and product given by: $e_1^2=e_2+e_3$, $e_2^2=0$, $e_3^2=-2e_4$ and $e_4^2=5e_3$. Then the adjacency matrix of the graph associated to the basis $B$ is:

$$P=
\left(
\begin{matrix}
0 & 1 & 1 & 0 \cr
0 & 0 & 0 & 0 \cr
0 & 0 & 0 & 1 \cr
0 & 0 & 1 & 0 \cr
\end{matrix}
\right)
$$
and $E$ is the graph given in Example \ref{ejemploGrafo}.

\end{example}
\medskip

Now, conversely, to every row-finite graph satisfying Condition (Sing)
we associate an evolution algebra whose corresponding structure matrix consists of 0 and 1, as follows.

\begin{definition}
\rm
Let $E$ be a row-finite graph satisfying Condition (Sing)
and  $P=(p_{ij})$ be its adjacency matrix. Assume $E^0 =\{v_i\}_{i\in \Lambda}$. For every field $\mathbb K$
the \emph{evolution}  $\mathbb K$-\emph{algebra associated to the graph} $E$, denoted by $A_E$, is the free  algebra whose underlined vector space has a natural basis $B=\{e_i\}_{i\in \Lambda}$ and with structure matrix relative to $B$ given by $P^t=(p_{ji})$.
\end{definition}

\begin{example}\label{EjemploA}
\rm
Let $E$ be the following graph:

$${
\tiny\xymatrix{
                                       &\bullet^{v_1} &                            & &   \\
\bullet^{v_2} \ar[ur]\ar[dr]&                     & \bullet^{v_4} \ar[dl] \ar[dr]& & \\
                                        &\bullet^{v_3} &                                         &\bullet^{v_5}  \ar@/^{-10pt}/ [r]& \bullet^{v_6}   \ar@/^{-10pt}/ [l]
}}$$
\vspace{.2truecm}
Its adjacency matrix is
$$P=
\left(
\begin{matrix}
0 & 0 & 0 & 0 & 0 & 0\cr
1 & 0 & 1 & 0 & 0 & 0\cr
0 & 0 & 0 & 0 & 0 & 0\cr
0 & 0 & 1 & 0 & 1 & 0\cr
0 & 0 & 0 & 0 & 0 & 1\cr
0 & 0 & 0 & 0 & 1 & 0
\end{matrix}
\right)$$

\noindent
and the corresponding evolution algebra is the algebra $A$ having a natural basis $B=\{e_1, \dots, e_6\}$ and product  determined by: $e_1^2= 0$, $e_2^2=e_1+e_3$, $e_3^2=0$, $e_4^2=e_3+e_5$, $e_5^2=e_6$ and $e_6^2=e_5$.
\end{example}

\begin{remark}\label{sumideros}
\rm
It is easy to determine the annihilator of an evolution algebra $A$ by looking at the sinks of the graph associated to a basis. By Proposition \ref{degen}, the annihilator of $A$ consists of the linear span of the elements of the basis whose square is zero (these are, precisely, the sinks of the corresponding graph).
For instance, in Example \ref{EjemploA}, ${\rm ann}(A) = {\rm lin}\{e_1, e_3\}$.
\end{remark}


\section{Ideals generated by one element}\label{UnElemento}
In order to characterize those ideals generated by one element, we introduce the following useful definitions.

\begin{definitions}
\label{descen}\
\rm
Let $B=\{e_{i}\ \vert \ i\in \Lambda \}$ be a natural basis
of an evolution algebra $A${\ and let }$i_{0}\in \Lambda .${\ The
\emph{first-generation descendents} of }$\ i_{0}$ {\ are the elements
of the subset }$D^{1}(i_{0})${\ given by: }
\begin{equation*}
D^{1}(i_{0}):=\left\{k\in \Lambda \ \vert \ e_{i_{0}}^{2}=\sum_k \omega _{ki_{0}}e_{k}\text{
with }\omega _{ki_{0}}\neq 0\right\}.
\end{equation*}
{In an abbreviated form, }$D^{1}(i_{0}):=\{j\in \Lambda\ \vert\ \omega
_{ji_{0}}\neq 0\}.${\ Note that }$j\in D^{1}(i_{0})${\ if and only
if, }$\pi _{j}(e_{i_{0}}^{2})\neq 0${\ (where }$\pi _{j}${\ is the canonical
projection of }$A${\ over }$\mathbb Ke_{j}${).}

{Similarly, we say that }$j${\ is a \emph{second-generation
descendent} of }$i_{0}${\ whenever }$j\in D^{1}(k)${\ for some }$
k\in D^{1}(i_{0}).${\ Therefore,}
\begin{equation*}
D^{2}(i_{0})=\bigcup\limits_{k\in D^{1}(i_{0})}D^{1}(k).
\end{equation*}
{By recurrency, we define the set of \emph{mth-generation descendents
} of }$i_{0}${\ as}
\begin{equation*}
{\ }D^{m}(i_{0})=\bigcup\limits_{k\in D^{m-1}(i_{0})}D^{1}(k).
\end{equation*}
{\ Finally, the \ s\emph{et of descendents} of }$i_{0}${\ is
defined as the subset of }$\Lambda ${\ given by }
\begin{equation*}
D(i_{0})=\bigcup\limits_{m\in \mathbb{N}}D^{m}(i_{0}).
\end{equation*}
On the other hand, we say that $j\in \Lambda$ is an
\emph{ascendent} of $i_{0}$ if $i_{0}\in D(j);$ that is, $i_{0}$
is a \emph{descendent of} $j.$
\end{definitions}
\medskip

\begin{remark}
\rm

From a biological point of view, the first-generation descendents of (the genotype) $i$ are the genotypes appearing in $e_i^2$ (note that here we are identifying $e_i$ and $i$).

The second-generation descendents of (the genotype) $i$ are the genotypes appearing in the reproduction of the first-generation descendents of  $e_i$.

In general, the mth-generation descendents of (the genotype) $i$ are the genotypes appearing in the reproduction of the (m-1)th-generation descendents of  $e_i$.

The set of descendents of $i$ are the genotypes appearing in the nth-generation descendents of $i$ for an arbitrary generation n.
\end{remark}

We illustrate the definitions just introduced in terms of the underlying graph associated to an evolution algebra (relative to a natural basis). We will abuse of the notation for simplicity.

\begin{definitions}\label{descendientes}
\rm
Let $E$ be a graph. For a vertex $j\in E^0$ we define:
$$D^m(j) :=\{v\in E^0 \ \vert \ \text{ there is a path } \mu \text{ such that } |\mu |= m,  s(\mu)=v_j, r(\mu)=v\}.$$
In words, the elements of $D^m(j)$ are those vertices to which $v_j$ connects via a path of length $m$. We also define
$$D(j) =\bigcup_{m\in \N}D^m(j)=\{v\in E^0 \ \vert \ \text{ there is a path } \mu \text{ such that }  s(\mu)=v_j, r(\mu)=v\}.$$
When we want to emphasize the graph $E$ we will write $D_E^m(j)$ and $D_E(j)$, respectively.
\end{definitions}

\begin{examples}
\rm
Let $E$ and $F$ be the following graphs:

$$
E:  \xymatrix{
 & \bullet_{v_2} & & \\
 \bullet_{v_1}\ar[ur] \ar[r] &  \bullet_{v_3}\ar@/^{-10pt}/ [r]&  \bullet_{v_4}  \ar@/^{-10pt}/ [l]
}
\quad \quad
F:  \xymatrix{
& {} & \bullet_{v_4}  \ar@/^{-10pt}/ [ld] &  {} \\
\bullet_{v_1}\ar[r] & \bullet_{v_2}  \ar@/^{-30pt}/ [rr]&  & \bullet_{v_3} \ar@/^{-10pt}/ [lu] \\
}
$$
\vspace{.70truecm}

Some examples of the sets of the $n$th-generation descendents and of the set of descendents of some indexes are the following.

 $D^1_E(3)=\{v_4\}=D^{1+2m}_E(3)$; $D^2_E(3)=\{v_3\}=D^{2m}_E(3)$ for every $m\in \N$, and so $D_E(3) = D^1_E(3)\cup D^2_E(3)=\{v_3, v_4\}$.

 $D^1_F(2)=\{v_3\}=D^{1+3m}_F(2)$; $D^2_F(2)=\{v_4\}=D^{2+3m}_F(2)$; $D^3_F(2) = \{v_2\}= D^{3m}_F(2)$ for every $m\in \N$, and so $D_E(3) = D^1_F(2)\cup D^2_F(2)\cup D^3_F(2)= \{v_2, v_3, v_4\}$.
\end{examples}

\medskip

Next we characterize the descendents (and hence the ascendents)\ of every
index $i_{0}\in \Lambda.$ More precisely, we describe the set $D^{m}(i_{0}).$

\medskip

\begin{proposition}\label{Desce}
Let $B=\{e_{i}\ \vert \ i\in \Lambda \}$ be a natural basis of an evolution
algebra $A.$ Consider $i_{0},j\in \Lambda $ and $m\geq 2$.
\begin{enumerate}[\rm (i)]
\item If $j\in D^{1}(i_{0})$ (if and only if $\omega _{ji_{0}}\neq 0$), then
\begin{equation*}
e_{j}e_{i_{0}}^{2}=\omega _{ji_{0}}e_{j}^{2}.
\end{equation*}

\item $j\in D^{m}(i_{0})$ if and only if there exist $k_{1},k_{2},\dots
,k_{m-1}\in \Lambda $ such that
$$
\omega _{jk_{m-1}}\omega _{k_{m-1}k_{m-2}}\cdots\omega _{k_{2}k_{1}}\omega
_{k_{1}i_{0}}\neq 0,
$$
in which case,
$$
e_{j}^{2}=\left({\omega_{jk_{m-1}}}\omega _{k_{m-1}k_{m-2}}\cdots\omega_{k_{2}k_{1}}\omega _{k_{1}i_{0}}\right)^{-1}
e_{j}e_{k_{m-1}}e_{k_{m-2}} \dots e_{k_{2}}e_{k_{1}}e_{i_{0}}^{2}.
$$
\end{enumerate}
\end{proposition}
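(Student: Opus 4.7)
The plan is to treat the two parts in order, with part (i) giving the single-step reproduction formula that will be iterated in part (ii).

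For part (i), I would simply expand. Since $B$ is a natural basis, $e_{i_0}^2 = \sum_{k\in\Lambda}\omega_{k i_0} e_k$, and multiplying on the left by $e_j$ and using orthogonality $e_j e_k = 0$ for $k \neq j$ collapses the sum to the single term $\omega_{j i_0} e_j^2$. The hypothesis $j \in D^1(i_0) \Leftrightarrow \omega_{j i_0} \neq 0$ is just a translation of the definition, so no extra work is needed.

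For part (ii), I would proceed by induction on $m \geq 2$. The base case $m=2$ reduces essentially to applying (i) twice: $j \in D^2(i_0)$ iff there is some $k_1 \in D^1(i_0)$ with $j \in D^1(k_1)$, equivalently $\omega_{k_1 i_0}\neq 0$ and $\omega_{j k_1}\neq 0$, which is the required product condition. For the formula, apply (i) with $i_0$ replaced by $k_1$ to get $e_j e_{k_1}^2 = \omega_{j k_1} e_j^2$, then use (i) on $e_{k_1}^2$ itself to rewrite $e_{k_1}^2 = \omega_{k_1 i_0}^{-1} e_{k_1} e_{i_0}^2$, and combine.

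For the inductive step, I would unfold the definition $D^m(i_0) = \bigcup_{k_{m-1}\in D^{m-1}(i_0)} D^1(k_{m-1})$: $j \in D^m(i_0)$ exactly when $\omega_{j k_{m-1}}\neq 0$ for some $k_{m-1} \in D^{m-1}(i_0)$, which by the inductive hypothesis is equivalent to the existence of $k_1,\dots,k_{m-1}$ with the stated product of weights nonzero. For the closed formula I would iterate (i): starting from $e_j^2 = \omega_{j k_{m-1}}^{-1} e_j e_{k_{m-1}}^2$, then rewriting $e_{k_{m-1}}^2 = \omega_{k_{m-1} k_{m-2}}^{-1} e_{k_{m-1}} e_{k_{m-2}}^2$, and so on down to $e_{k_1}^2 = \omega_{k_1 i_0}^{-1} e_{k_1} e_{i_0}^2$, substituting back to get the product of inverse weights times the long string $e_j e_{k_{m-1}} \cdots e_{k_1} e_{i_0}^2$.

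I do not expect any serious obstacle; the whole statement is a bookkeeping consequence of the orthogonality of a natural basis. The only care needed is to keep track that every intermediate $\omega$-factor is nonzero so that the inverses in the formula make sense, which is precisely guaranteed by the chain of conditions $\omega_{j k_{m-1}},\omega_{k_{m-1}k_{m-2}},\dots,\omega_{k_1 i_0}\neq 0$ produced (and required) by the inductive characterization of $D^m(i_0)$.
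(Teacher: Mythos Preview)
Your proposal is correct and follows essentially the same route as the paper's proof: part (i) is the one-line orthogonality computation, and part (ii) is proved by induction on $m$, unfolding the recursive definition of $D^m(i_0)$ and substituting the inverted form of (i) along the chain $k_{m-1},k_{m-2},\dots,k_1,i_0$. The only cosmetic difference is that the paper plugs in the full inductive-hypothesis formula for $e_{k_{m-1}}^2$ in one step, whereas you describe the same substitution as a step-by-step iteration of (i); the resulting computation is identical.
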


\begin{proof}
Note that $j\in D^{1}(i_{0})\,$if and only if $\omega _{ji_{0}}\neq 0$, in
which case $e_{j}e_{i_{0}}^{2}=\omega _{ji_{0}}e_{j}^{2}$ so that
$$
e_{j}^{2}=\omega _{ji_{0}}^{-1}e_{j}e_{i_{0}}^{2}.
$$
Suppose that the result holds for $m-1.$ Thus, $k\in D^{m-1}(i_{0})\,$\ if
and only if there exist $k_{1},k_{2},\dots ,k_{m-2}\in \Lambda $ such that
$$
e_{k}^{2}=({\omega _{kk_{m-2}}\omega
_{k_{m-2}k_{m-3}}\cdots \omega _{k_{2}k_{1}}\omega _{k_{1}i_{0}}})^{-1}
e_{k}e_{k_{m-2}}\cdots e_{k_{2}}e_{k_{1}}e_{i_{0}}^{2}.
$$

\noindent Let $j\in D^{m}(i_{0}).$ This means that $j\in D^{1}(k)$ for some $k\in
D^{m-1}(i_{0})$, so that $\omega _{jk}\neq 0,\,\ $and hence
$e_{j}^{2}=({\omega _{jk}})^{-1}e_{j}e_{k}^{2}.$ Consequently,
$$
e_{j}^{2}=({\omega _{jk}\omega
_{kk_{m-2}}\cdots \omega _{k_{2}k_{1}}\omega
_{k_{1}i_{0}}})^{-1}e_{j}e_{k}e_{k_{m-2}}\cdots e_{k_{2}}e_{k_{1}}e_{i_{0}}^{2},
$$
as desired.
\end{proof}

\medskip

From Proposition \ref{Desce} we deduce that if $i$ is a descendent
of $j$, and if $j$ is a descendent of $k$, then $i$ is a descendent of $k.$

Another direct consequence of the mentioned proposition is the corollary that follows.
From now on, if $S$ is a subset of an algebra $A$ then we will denote by
$\left\langle S\right\rangle $ the ideal of $A$ generated by $S.$

\begin{corollary}
Let $B=\{e_{i}\ \vert \ i\in \Lambda \}$ be a natural basis of an evolution algebra $A.$
If $j\in \Lambda \,$\ is a descendent of $i_{0}\in \Lambda,$ then $
\left\langle e_{j}^{2}\right\rangle \subseteq \left\langle
e_{i_{0}}^{2}\right\rangle$.
\end{corollary}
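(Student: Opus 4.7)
The plan is to deduce the corollary directly from Proposition \ref{Desce} by showing that $e_j^2 \in \langle e_{i_0}^2\rangle$. Once this is established, the inclusion $\langle e_j^2\rangle \subseteq \langle e_{i_0}^2\rangle$ is automatic, since $\langle e_j^2\rangle$ is by definition the smallest ideal containing $e_j^2$.

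First I would unwind the hypothesis: ``$j$ is a descendent of $i_0$'' means, by Definitions \ref{descen}, that $j\in D^m(i_0)$ for some $m\in\mathbb{N}$. I would split into two (essentially identical) cases.

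If $m=1$, Proposition \ref{Desce}(i) yields $e_{j}e_{i_0}^{2}=\omega_{ji_0}e_j^{2}$ with $\omega_{ji_0}\neq 0$, hence $e_j^{2}=\omega_{ji_0}^{-1}e_je_{i_0}^{2}$. Since $e_{i_0}^{2}\in\langle e_{i_0}^{2}\rangle$ and $\langle e_{i_0}^{2}\rangle$ is an ideal, the product $e_je_{i_0}^{2}$ lies in $\langle e_{i_0}^{2}\rangle$, and therefore so does $e_j^{2}$. If $m\geq 2$, Proposition \ref{Desce}(ii) provides indices $k_1,\dots,k_{m-1}\in\Lambda$ with $\omega_{jk_{m-1}}\omega_{k_{m-1}k_{m-2}}\cdots\omega_{k_1i_0}\neq 0$ and
\[
e_j^{2}=\bigl(\omega_{jk_{m-1}}\omega_{k_{m-1}k_{m-2}}\cdots\omega_{k_1i_0}\bigr)^{-1}\,e_j\bigl(e_{k_{m-1}}\bigl(\cdots\bigl(e_{k_1}e_{i_0}^{2}\bigr)\cdots\bigr)\bigr).
\]
Building the right-hand side inside-out, I would argue by a short induction: starting from $e_{i_0}^{2}\in\langle e_{i_0}^{2}\rangle$ and applying the ideal-closure property $\langle e_{i_0}^{2}\rangle A\subseteq \langle e_{i_0}^{2}\rangle$ once for each factor $e_{k_\ell}$, and finally for $e_j$, the whole product remains in $\langle e_{i_0}^{2}\rangle$. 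Scaling by a nonzero element of $\mathbb{K}$ does not leave the ideal, so $e_j^{2}\in\langle e_{i_0}^{2}\rangle$.

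The only subtlety worth flagging is that $A$ is non-associative, so a priori the parenthesisation of a long product of basis vectors matters; but this is precisely why the induction on the nesting depth is needed, and it works because the definition of an ideal requires closure under multiplication by a single element of $A$ on either side. Having shown $e_j^{2}\in\langle e_{i_0}^{2}\rangle$ in both cases, the conclusion $\langle e_j^{2}\rangle\subseteq\langle e_{i_0}^{2}\rangle$ follows from the minimality of the ideal generated by $e_j^{2}$. I do not anticipate any genuine obstacle; the work is entirely in reading off the right information from Proposition \ref{Desce}.
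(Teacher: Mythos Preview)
Your proof is correct and is exactly the intended argument: the paper states this corollary as a direct consequence of Proposition~\ref{Desce} without spelling out the details, and you have filled them in precisely, including the careful inside-out induction to handle the non-associative iterated product. There is nothing to add.
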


\medskip

Proposition \ref{Desce}  will allow to describe easily the ideal
generated by an element in a natural basis, as well as the ideal generated by its square.

\begin{corollary}\label{nice}
Let $A$ be an evolution algebra and $B=\{e_{i}\ \vert \ i\in \Lambda \}$
a natural basis. Then, for every $k\in \Lambda,$
$$
\left\langle e_{k}^{2}\right\rangle ={\rm lin}\{e_{j}^{2}\ \vert \ j\in D(k)\cup
\{k\}\} \quad \hbox{and} \quad  \left\langle e_{k}\right\rangle =\mathbb{K} e_{k}+\left\langle e_{k}^{2}\right\rangle.
$$
\end{corollary}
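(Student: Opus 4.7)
The plan is to unfold the ideal $\langle e_k^2\rangle$ layer by layer, using the fact that in a (commutative, non-associative) algebra the ideal generated by a single element $x$ may be written as
\[
\langle x\rangle \;=\; \mathbb{K}x + xA + (xA)A + \bigl((xA)A\bigr)A + \cdots,
\]
and that this subspace is automatically closed under multiplication by $A$. The engine driving the computation is Proposition \ref{Desce}(i), together with the obvious multiplication formula
\[
e_k^2\cdot\Bigl(\sum_{i\in\Lambda}\alpha_i e_i\Bigr) \;=\; \sum_{j\in\Lambda}\omega_{jk}\alpha_j\,e_j^2,
\]
obtained by expanding $e_k^2=\sum_j\omega_{jk}e_j$ and using that $e_je_i=0$ for $i\neq j$.

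From this formula I read off the base case $e_k^2\cdot A = \operatorname{lin}\{e_j^2 \mid j\in D^1(k)\}$, since the nonzero coefficients are exactly those with $j\in D^1(k)$. The next step is an induction on $m\geq 1$ establishing
\[
\underbrace{\bigl((e_k^2\,A)A\cdots\bigr)A}_{m\text{ multiplications}} \;=\; \operatorname{lin}\{e_j^2 \mid j\in D^m(k)\}.
\]
The inductive step applies the same multiplication formula to each generator $e_\ell^2$ with $\ell\in D^{m-1}(k)$, which produces exactly those $e_j^2$ with $j\in D^1(\ell)$; taking the union over $\ell$ yields $D^m(k)$ by the recursive definition of $D^m$ in Definitions \ref{descen}. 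Summing over all $m\geq 0$ and invoking $D(k)=\bigcup_{m\geq 1}D^m(k)$ gives the first claimed equality, once one also checks that the right-hand side really is an ideal (which is clear from the formula for $e_j^2\cdot A$, keeping the span inside $\operatorname{lin}\{e_\ell^2 \mid \ell\in D(k)\}$).

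For the second identity the key observation is that $e_k\cdot(\sum_i\alpha_ie_i)=\alpha_k e_k^2$, so $e_k A=\mathbb{K}e_k^2$. Hence all higher iterated products $(e_k A)A,\;((e_k A)A)A,\ldots$ are already contained in $\langle e_k^2\rangle$, and the general formula for $\langle e_k\rangle$ collapses to $\mathbb{K}e_k+\langle e_k^2\rangle$.

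I don't foresee a serious obstacle: the only delicate point is resisting the temptation to use associativity when iterating $(e_k^2 A)A$. The argument must proceed strictly through the explicit products $e_j\cdot e_i$ in the natural basis, so the induction has to be written out in terms of the specific subspaces $\operatorname{lin}\{e_j^2 \mid j\in D^m(k)\}$ rather than in terms of formal associative products of $e_k^2$ with $A$.
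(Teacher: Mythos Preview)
Your proposal is correct and follows essentially the same route as the paper: compute $e_k^2\cdot A=\operatorname{lin}\{e_j^2\mid j\in D^1(k)\}$ via Proposition \ref{Desce}(i), iterate to obtain $\operatorname{lin}\{e_j^2\mid j\in D^m(k)\}$ at the $m$th stage, and take the union; the second identity then follows from $e_kA=\mathbb{K}e_k^2$. The paper's proof is simply a terser version of exactly this argument, and your added remarks (checking that the right-hand side is an ideal, and the caution about non-associativity) are sound elaborations rather than departures.
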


\begin{proof}
Since $D^{1}(k)=\{j\in \Lambda \ \vert \ \omega _{jk}\neq 0\}$, by
Proposition \ref{Desce} we have
$$Ae_{k}^{2}={\rm lin}\{e_{j}^{2}\ \vert \ j\in D^{1}(k)\}.$$ Consequently,
$A(Ae_{k}^{2})={\rm lin}\{e_{j}^{2}\ \vert \ j\in D^{2}(k)\},$ and, therefore,
$
\left\langle e_{k}^{2}\right\rangle ={\rm lin}\{e_{j}^{2}\ \vert \ j\in D(k)\cup \{k\}\}.
$ The rest is clear.
\end{proof}

Another proof of Corollary \ref{nice}  will be obtained in Proposition \ref{cuadrado}.

\begin{remark}\label{numerabilidad}
\rm
Since $ \left\langle e_{k}\right\rangle =\mathbb{K} e_{k}+\left\langle e_{k}^{2}\right\rangle$, it is clear that $\left\langle e_{k}\right\rangle
=\left\langle e_{k}^{2}\right\rangle $ if and only if $e_{k}\in \left\langle
e_{k}^{2}\right\rangle$.
On the other hand, because $D(k)$ is at most countable, by definition, the dimension of $\left\langle e_{k}\right\rangle $ is, at most, countable.
\end{remark}
\medskip

We can also describe the ideal generated by any element in a natural basis of an evolution algebra in terms of multiplication operators. This result will be very useful in order to characterize simple evolution algebras.

\begin{definitions}\label{multipli}
\rm
Let $A$ be an evolution $\mathbb K$-algebra. For any element $a\in A$ we define the multiplication operator by $a$, denoted by $\mu_a$, as the following map:
$$
\begin{matrix}
\mu_a: & A & \to A\\
& x & \mapsto ax
\end{matrix}
$$
By $\mu_A$ we will mean the linear span of the set $\{\mu_a \ \vert \ a \in A\}$.
For an arbitrary $n \in \N$, denote by $\mu_A^n$:
$$
\mu_A^n:={\rm lin}\{\mu_{a_1}\dots \mu_{a_n}\ \vert \ a_1, \dots, a_n\in A\}.
$$
For $n=0$ we define $\mu_a^0$ as the identity map $i_A: A \to A$, while $\mu_A^0$ denotes $\mathbb K i_A$.
Now, for $x\in A$, the notation $\mu_A^n(x)$ will stand for the following linear span:
\begin{eqnarray*}
\mu_A^n(x):& = &{\rm lin}\{\mu_{a_1}\mu_{a_2}\dots \mu_{a_{n-1}}\mu_{a_n}(x)\ \vert \ a_1, \dots, a_n \in A\} \\
& = & {\rm lin}\{a_1(a_2(\dots (a_{n-1}(a_n x)) \dots)\ \vert \ a_1, \dots, a_n \in A\}.
\end{eqnarray*}
For example, $\mu_A^3(x) ={\rm lin}\{a_1(a_2 (a_3 x)))\ \vert \ a_1, a_2, a_3 \in A\}$.
\end{definitions}

\begin{definition}\label{lambda}
\rm
Let $A$ be an evolution algebra with a natural basis $B=\{e_{i}\ \vert \ i\in \Lambda \}$.
For any $x\in A$, we define
$$\Lambda^x :=\{i\in \Lambda \ \vert \ e_ix\neq 0\}.$$
\end{definition}

\begin{proposition}\label{cuadrado}
Let $A$ be an evolution algebra with a natural basis $B=\{e_{i}\ \vert \ i\in \Lambda \}$.
\begin{enumerate}[\rm (i)]
\item  Let $k\in \Lambda$ be such that $e_k^2\neq 0$.
\medskip
\begin{enumerate}[\rm (a)]
\item $\mu_A^n (e_k^2)= {\rm lin}\{e_j^2 \ \vert \ j\in D^n(k)\},$ for every $n\in \N$.
\item $ \left\langle e_{k}^{2} \right\rangle = {\rm lin}\bigcup\limits_{n=0}^\infty \mu_A^n(e_k^2). $
\medskip
\item $\left\langle e_k^2\right\rangle = {\rm lin}\{e_j^2\ \vert \ j\in D(k)\cup \{k\}\}.$
 \end{enumerate}
 \medskip
\item For any $x\in A$,
\medskip
\begin{enumerate}[\rm (a)]
\item $\mu^1_A(x)= {\rm lin}\{e_i^2 \ \vert \ i\in \Lambda^x\}$ and
for any $n\geq 2$, $\mu_A^n(x)= {\rm lin}\bigcup\limits_{i\in \Lambda^x}\{e_j^2\ \vert \ j\in D^{n-1}(i)\}$.
\item $\left\langle x\right\rangle = {\rm lin} \bigcup\limits_{n=0}^{\infty}\mu_A^n(x).$
\end{enumerate}
\end{enumerate}
\end{proposition}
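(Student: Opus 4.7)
The plan is to prove the two halves of the proposition in order, deriving each successive item from the previous ones. Throughout I will use only the formula $e_i\,e_k^2 = \omega_{ik}\,e_i^2$, which is the content of Proposition \ref{Desce}(i), together with the standard fact that in a commutative (non-unital) algebra the ideal generated by an element $y$ equals $\mathbb K y + Ay + A(Ay) + \cdots$

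For part (i)(a), I will argue by induction on $n\geq 1$. The base case is a direct computation: since $a\,e_k^2 = \sum_i \alpha_i e_i\,e_k^2 = \sum_i \alpha_i\omega_{ik}e_i^2$ for $a=\sum_i\alpha_i e_i$, the space $\mu_A^1(e_k^2)$ is spanned by $\{\omega_{ik}\,e_i^2 \mid i \in \Lambda\}$; the nonzero contributions come exactly from those $i$ with $\omega_{ik}\neq 0$, that is, from $i\in D^1(k)$, giving $\mu_A^1(e_k^2)=\operatorname{lin}\{e_i^2\mid i\in D^1(k)\}$. For the inductive step, I rewrite $\mu_A^n(e_k^2)=\mu_A\bigl(\mu_A^{n-1}(e_k^2)\bigr)$, apply the induction hypothesis to obtain $\operatorname{lin}\bigcup_{j\in D^{n-1}(k)}\mu_A(e_j^2)$, apply the base case to each $e_j^2$ (noting that if $e_j^2=0$ then $D^1(j)=\emptyset$ so the equality is vacuously fine), and use the identity $D^n(k)=\bigcup_{j\in D^{n-1}(k)}D^1(j)$ from Definitions \ref{descen}.

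Part (i)(b) is general: since $A$ is commutative, the ideal generated by any element $y$ coincides with $\sum_{n\geq 0}\mu_A^n(y)$. Indeed, the right-hand side contains $y=\mu_A^0(y)$ and is absorbed by $\mu_A$ (since $\mu_A(\mu_A^n(y))\subseteq \mu_A^{n+1}(y)$), hence is an ideal, and any ideal containing $y$ must contain each $\mu_A^n(y)$. Applied to $y=e_k^2$ this yields the stated formula. Part (i)(c) then follows by substituting (i)(a) into (i)(b), observing that $\mu_A^0(e_k^2)=\mathbb K e_k^2$ contributes the index $k$ and that $\bigcup_{n\geq 1}D^n(k)=D(k)$.

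For part (ii), I expand $x=\sum_i\alpha_i e_i$ and compute $e_i\,x=\alpha_i e_i^2$; hence $i\in\Lambda^x$ precisely when $\alpha_i\neq 0$ and $e_i^2\neq 0$, and $\mu_A^1(x)=\operatorname{lin}\{\alpha_i e_i^2\mid i\in\Lambda\}=\operatorname{lin}\{e_i^2\mid i\in\Lambda^x\}$. For $n\geq 2$, I write $\mu_A^n(x)=\mu_A^{n-1}(\mu_A^1(x))$ and apply (i)(a) to each generator $e_i^2$ with $i\in\Lambda^x$ (note that $e_i^2\neq 0$ for such $i$, so (i)(a) applies), obtaining $\operatorname{lin}\bigcup_{i\in\Lambda^x}\{e_j^2\mid j\in D^{n-1}(i)\}$, as required for (ii)(a). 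Finally, (ii)(b) is immediate from the same general ideal formula used in (i)(b), now with $y=x$.

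There is no substantive obstacle: the argument is essentially bookkeeping. The only point requiring a little care is to keep track of the degenerate situations (when $e_i^2=0$ or $\alpha_i=0$) so that the claimed equalities of linear spans remain valid after trimming the zero generators; the definitions of $D^1(\cdot)$ and $\Lambda^x$ are designed precisely to absorb these cases cleanly.
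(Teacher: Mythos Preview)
Your proof is correct and follows essentially the same route as the paper: induction on $n$ for (i)(a) using the identity $e_i e_k^2 = \omega_{ik} e_i^2$, then deducing (b) and (c), and reducing (ii) to (i) by way of $\mu_A^1(x)$. Your base case is actually slightly slicker than the paper's --- you compute $a\,e_k^2$ for a general $a$ in one line and read off both inclusions at once, whereas the paper computes $e_l\,e_k^2$ to get one inclusion and then invokes Proposition~\ref{Desce}(i) separately for the reverse --- and you are a bit more explicit than the paper about why the degenerate cases ($e_j^2=0$, so $D^1(j)=\emptyset$) do not disturb the inductive step.
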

\begin{proof}
We prove (a) in item (i) by induction. Suppose first $n=1$. Note that
$e^2_k = \sum\limits_{i\in D^1(k)}\omega_{ik}e_{i}$ with $\omega_{ik}\in \mathbb K\setminus\{0\}$.
For an arbitrary $e_l\in B$ we have
$e^2_k e_l = \sum_{i\in D^1(k)}{\omega_{ik}}e_{i}e_l.$
This sum is zero, if $l\neq i$ for every $i\in D^1(k)$, or it coincides with $\omega_{ik}e_{i}^2$ if $l= i$ for some $i$. Therefore,
$$\mu_A^1(e_{k}^{2})\subseteq {\rm lin}\{e^2_{i}\ \vert \ i\in D^1(k)\}.$$
To show
${\rm lin}\{e^2_{i}\ \vert \ i\in D^1(k)\}\subseteq  \mu_A^1(e_{k}^{2})$, take any
$e_{i}$ with $i\in D^1(k)$. By Proposition \ref{Desce} (i) we have
$e_i^2 =  {\omega_{ik}}^{-1}e_k^2e_{i}\subseteq  \mu_A^1(e_k^2)$. This finishes the first step in the induction process.

Assume we have the result for $n-1$. Using the induction hypothesis we get:
\begin{eqnarray*}
\mu_A^n(e_k^2) & = & A\ \mu_A^{n-1}(e_k^2)= A\ \left({\rm lin}\{e_i^2\ \vert \ i\in D^{n-1}(k)\}\right) =
{\rm lin}\bigcup\limits_{i\in D^{n-1}(k)}\mu_A^1(e_i^2) \\
& = & {\rm lin} \bigcup\limits_{i\in D^{n-1}(k)}\{e_j^2 \ \vert \ j\in D^1(i)\} = {\rm lin}\{e_j^2 \ \vert \ j\in D^n(k)\}.
\end{eqnarray*}
This proves (a) in (i). Item (b) in (i) follows immediately from (a) and item (c) can be obtained from (a) and (b).

Now we prove (ii).
Note that $\mu^1_A(x)= {\rm lin}\{e_i^2 \ \vert \ i\in \Lambda^x\}$. It is not difficult to see that, for $n>1$,
$$\mu_A^n(x) = {\rm lin}\bigcup\limits_{i\in \Lambda^x}\mu_A^{n-1}(e_i^2).$$
Apply condition (b) in item (i) to finish the proof of (a) in (ii).
Finally, item (b) in (ii) is  easy to check.
\end{proof}

\begin{corollary}\label{dimensGeneral}
Let $A$ be an evolution algebra. Then for any element $x\in A$ the dimension of the ideal generated by $x$ is at most countable.
\end{corollary}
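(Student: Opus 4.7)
The plan is to apply Proposition \ref{cuadrado}(ii)(b), which already expresses $\langle x\rangle$ as ${\rm lin}\bigcup_{n=0}^\infty \mu_A^n(x)$, and then simply count how many basis-like generators appear. Fix a natural basis $B=\{e_i\mid i\in\Lambda\}$ of $A$. Since $x$ is a vector in $A$, we may write $x=\sum_{i\in F}\alpha_i e_i$ for some finite subset $F\subseteq\Lambda$ and scalars $\alpha_i\in\mathbb{K}\setminus\{0\}$. A direct computation gives $e_j x=\alpha_j e_j^2$ for $j\in F$ and $e_j x=0$ otherwise, so the set $\Lambda^x$ from Definition \ref{lambda} is contained in $F$ and is, in particular, finite.

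Next I would observe that for every index $i\in\Lambda$ the first-generation descendent set $D^1(i)$ is finite. Indeed, $D^1(i)=\{j\in\Lambda\mid\omega_{ji}\neq 0\}$ is exactly the support of the $i$th column of the structure matrix $M_B$, which has only finitely many non-zero entries because $M_B\in {\rm CFM}_\Lambda(\mathbb{K})$. By induction on $m$, using $D^m(i)=\bigcup_{k\in D^{m-1}(i)}D^1(k)$, each $D^m(i)$ is a finite union of finite sets and therefore finite. Consequently $D(i)=\bigcup_{m\in\mathbb{N}}D^m(i)$ is a countable union of finite sets, hence at most countable.

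Now I combine the two observations via Proposition \ref{cuadrado}(ii)(a): for $n\geq 1$,
\[
\mu_A^n(x)\subseteq {\rm lin}\bigcup_{i\in\Lambda^x}\{e_j^2\mid j\in D^{n-1}(i)\cup\{i\}\}.
\]
Therefore
\[
\langle x\rangle={\rm lin}\Bigl(\{x\}\cup\bigcup_{i\in\Lambda^x}\{e_j^2\mid j\in D(i)\cup\{i\}\}\Bigr),
\]
and the spanning set on the right is a finite union (indexed by the finite set $\Lambda^x$) of at most countable sets, hence itself at most countable. This bounds $\dim\langle x\rangle$ by a countable cardinal.

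I do not anticipate any serious obstacle: the proof is essentially a bookkeeping argument once Proposition \ref{cuadrado} is in hand. The only point that deserves care is the finiteness of $\Lambda^x$ and of each $D^1(i)$, and both reduce to the finite-column property built into the definition of ${\rm CFM}_\Lambda(\mathbb{K})$ together with the fact that any vector is a finite linear combination of basis elements.
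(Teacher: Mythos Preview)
Your proof is correct and follows essentially the same approach as the paper: both invoke Proposition \ref{cuadrado}(ii)(b) to write $\langle x\rangle={\rm lin}\bigcup_{n=0}^\infty\mu_A^n(x)$ and then observe that each $\mu_A^n(x)$ is finite-dimensional. The paper states this last fact without justification, whereas you unpack it explicitly by verifying that $\Lambda^x$ and each $D^m(i)$ are finite (via the finite-column property of $M_B$), which is a welcome addition of detail but not a different strategy.
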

\begin{proof} By (ii) in Proposition \ref{cuadrado} the dimension of the ideal generated by $x$ is the dimension of $\cup_{n=0}^\infty \mu_A^n(x)$. Since any $\mu_A^n(x)$ is finite dimensional, for every $n\in \N \cup \{0\}$, we are done.
\end{proof}


\section{Simple evolution algebras}\label{Simple}
This section is addressed to the study and characterization of simple evolution algebras.
We recall that an algebra $A$ is \emph{simple} if $A^2 \neq 0$ and $0$ is the only proper ideal.

\begin{proposition}\label{simple}
Let $A$ be an evolution algebra and let $B=\{e_{i}\ \vert \ i\in \Lambda \}$
be a natural basis of $A$.$\,$ Consider the following conditions:

\begin{enumerate}[\rm (i)]
\item\label{uno} $A$ is simple.
\item\label{dos} $A$
satisfies the following properties:
\begin{enumerate}[\rm (a)]
\item $A$ is non-degenerate.
\item $A={\rm lin}\{e_{i}^{2}\ \vert \ i\in \Lambda \}$.
\item If ${\rm lin}\{e_{i}^{2}\ \vert \ i\in \Lambda'\}$ is a non-zero ideal of $A$ for a
non-empty $\Lambda'\subseteq \Lambda $ then $\vert \Lambda'\vert =\vert \Lambda \vert.$
\end{enumerate}
\end{enumerate}

Then: {\rm (i)} $\Rightarrow$ {\rm (ii)} and {\rm (ii)} $\Rightarrow$ {\rm (i)} if $\vert \Lambda \vert < \infty$.
Moreover, if $A$ is a simple evolution algebra, then the dimension of $A$ is at most countable.
\end{proposition}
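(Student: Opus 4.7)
For the direction (\ref{uno}) $\Rightarrow$ (\ref{dos}), I would treat each of (a), (b), (c) in turn. For (a), note that $\mathrm{ann}(A)$ is an ideal by Proposition \ref{degen} (iii), and it cannot equal $A$ since $A^2\ne 0$; simplicity then forces $\mathrm{ann}(A)=0$, and non-degeneracy follows from Corollary \ref{MVM}. For (b), observe that for $a=\sum\alpha_i e_i$, $b=\sum\beta_i e_i$ the product formula recalled in Section \ref{basic} gives $ab=\sum\alpha_i\beta_i e_i^2$, so $A^2=\mathrm{lin}\{e_i^2\mid i\in\Lambda\}$. Since $A^2$ is an ideal and $A^2\ne 0$, simplicity gives $A^2=A$. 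For (c), any nonzero ideal of the shape $\mathrm{lin}\{e_i^2\mid i\in\Lambda'\}$ must coincide with $A$ by simplicity; comparing dimensions one has $|\Lambda|=\dim A\le |\Lambda'|\le|\Lambda|$.

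For the converse (\ref{dos}) $\Rightarrow$ (\ref{uno}) under $|\Lambda|<\infty$, the key idea is to associate to every nonzero ideal $J$ the set
\[
\Lambda_J:=\{k\in\Lambda\mid e_k^2\in J\}
\]
and show that $I_J:=\mathrm{lin}\{e_k^2\mid k\in\Lambda_J\}$ is a nonzero ideal falling under hypothesis (c). First I would check $\Lambda_J\ne\emptyset$: pick $0\ne x=\sum\alpha_i e_i\in J$ and any $k$ with $\alpha_k\ne 0$, so $e_kx=\alpha_k e_k^2\in J$ gives $e_k^2\in J$, and $e_k^2\ne 0$ by (a). Next I would verify $I_J$ is an ideal: for $k\in\Lambda_J$ and any $e_l\in B$ the identity $e_l\cdot e_k^2=\omega_{lk}e_l^2$ implies that whenever $\omega_{lk}\ne 0$ one has $e_l^2=\omega_{lk}^{-1}(e_l\cdot e_k^2)\in J$, hence $l\in\Lambda_J$; therefore $AI_J\subseteq I_J$. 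Condition (c) then gives $|\Lambda_J|=|\Lambda|$, and finiteness of $\Lambda$ upgrades this to $\Lambda_J=\Lambda$. Combined with (b) this yields $A=\mathrm{lin}\{e_i^2\mid i\in\Lambda\}\subseteq J$, so $J=A$; together with $A^2\ne 0$ (again from (a)) this proves simplicity.

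Finally, for the dimension statement, if $A$ is simple pick any nonzero $x\in A$; then $\langle x\rangle$ is a nonzero ideal, so $\langle x\rangle=A$, and Corollary \ref{dimensGeneral} bounds $\dim A=\dim\langle x\rangle$ by $\aleph_0$.

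The only real subtlety I anticipate is the verification that $I_J$ is an ideal, because it hinges on the special shape of products $e_l\cdot e_k^2=\omega_{lk}e_l^2$ afforded by the natural basis; it is precisely this closure of $\Lambda_J$ under ``first-generation descendents'' that lets hypothesis (c) be applied. The finiteness assumption is used only in the very last step of deducing $\Lambda_J=\Lambda$ from $|\Lambda_J|=|\Lambda|$, which explains why the converse is not claimed in the infinite-dimensional case.
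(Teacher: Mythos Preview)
Your proof is correct and follows essentially the same approach as the paper's. The only cosmetic difference is in the direction (ii) $\Rightarrow$ (i): the paper picks a single $k$ with $\pi_k(u)\neq 0$ and invokes Proposition~\ref{cuadrado} to identify $\langle e_k^2\rangle=\mathrm{lin}\{e_j^2\mid j\in D(k)\cup\{k\}\}$ as the nonzero ideal to which (c) is applied, whereas you work directly with $\Lambda_J=\{k\mid e_k^2\in J\}$ and verify its closure under first-generation descendents by hand; both routes produce a nonzero ideal of the required form inside $J$ and then use finiteness of $\Lambda$ in exactly the same way.
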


\begin{proof}
(i) $\Rightarrow$ (ii).
Suppose first that $A$ is a simple evolution algebra. If $A$ is degenerate, then $e^2=0$ for some
element $e$ in a natural basis $B$ of $A$. Then ${\rm lin}\{e\}$ is a nonzero ideal of $A$.
The simplicity implies ${\rm lin}\{e\}=A$, but then $A^2=0$, a contradiction. This shows (a).

Note that $A^2 = {\rm lin}\{e_{i}^{2}\ \vert \ i\in \Lambda \}$ is an ideal of $A$. Since $A^2 \neq 0$ and $A$ is simple, we have $A=A^2$, which is (b).

If ${\rm lin}\{e_{i}^{2}\ \vert \ i\in \Lambda'\}$ is a non-zero ideal of $A$, the simplicity of $A$ implies
${\rm lin}\{e_{i}^{2}\ \vert \ i\in \Lambda'\}=A=  {\rm lin}\{e_{i}^{2}\ \vert \ i\in \Lambda \}
= {\rm lin}\{e_{i}\ \vert \ i\in \Lambda \}$.  This gives $\vert \Lambda'\vert =\vert \Lambda \vert.$
\medskip

(ii) $\Rightarrow$ (i). Assume that the dimension of $A$ is finite, say $n$.
Since $A$ satisfies (a), $A^2\neq 0$. To prove that $A$ is simple, suppose that this is not the case. Then, there exists $u\in A$ such that $\left\langle u\right\rangle $ is a non-zero proper ideal of $A.$ Let $k\in \Lambda $ be such that $\pi _{k}(u)\neq 0.$ Then $ \left\langle e_{k}^{2} \right\rangle$ is a
non-zero ideal of $A$ contained in $ \left\langle u\right\rangle$, so that
$\left\langle e_{k}^{2}\right\rangle $ is proper.
Proposition \ref{cuadrado}
implies that $\left\langle e_{k}^{2} \right\rangle = {\rm lin}\{e^2_j\ \vert \ j \in D(k)\cup \{k\}\}$, which, by (a), is a non-zero ideal of $A$. Use (c) to get
$\vert \Lambda\vert = \vert D(k) \cup \{k\}\vert $. Since $D(k) \cup \{k\} \subseteq \Lambda$, we have
$ \Lambda =  D(k) \cup \{k\}$.
 Now using (b),
$$\left\langle e_{k}^{2} \right\rangle = {\rm lin}\{e_j^2\ \vert \ j \in D(k) \cup \{k\}\}=
{\rm lin}\{e^2_j\ \vert \ j\in \Lambda\}=A,$$
a contradiction as $\left\langle e_{k}^{2} \right\rangle$ is a proper ideal of $A$.
\medskip

The dimension of $A$ is at most countable when $A$ is simple by Corollary \ref{dimensGeneral}.
\end{proof}

Although every simple evolution algebra is non-degenerate, at most countable dimensional and coincides with the linear span of the square of the elements of any natural basis, as Proposition \ref{simple} says, the converse is not true because  the hypothesis of finite dimension
is necessary as the following example shows.

\begin{example}\label{infinito}
\rm Let $A$ be an evolution algebra with natural basis $\{e_i \ \vert \ i \in \N\}$ and product given by:

\begin{displaymath}
{
\begin{array}{lll}
e_1^2  &=&e_3+e_5     \\
e_3^2&=&e_1+e_3+e_5\\
e_5^2&=&e_5+e_7    \\
e_7^2&=&e_3+e_5+e_7      \\
 & \vdots &
\end{array}
}
\qquad
{
\begin{array}{lll}
e_2^2&=&e_4+e_6\\
        e_4^2&=&e_2+e_4+e_6\\
           e_6^2&=&e_6+e_8\\
      e_8^2&=&e_4+e_6+e_8\\
 & \vdots &
      \end{array}
      }
\end{displaymath}

Then $A$ satisfies the conditions (a), (b) and (c) in  Proposition \ref{simple} (ii)  but $A$ is not simple as
$\left\langle e_1^2\right\rangle$ and $\left\langle e_2^2\right\rangle$ are two nonzero proper ideals.
\end{example}

\begin{example}\label{noSimple}
\rm
Consider the evolution algebra $A$ having a natural basis $\{e_1, e_2\}$ and product given by $e_i^2=e_i$ for $i=1, 2$. Then $\left\langle e_i\right\rangle= \mathbb K e_i$ is a non-zero proper ideal of $A$. This means that the condition (c)  in Proposition \ref{simple} (ii) cannot be dropped.
\end{example}

We show now that there exist simple evolution algebras of infinite dimension.

\begin{example}
\rm
Let $A$ be the evolution algebra with natural basis $\{e_i \ \vert \ i \in \N\}$ and product given by:
\begin{eqnarray*}
e_{2n-1}^2 =& e_{n+1}+e_{n+2}\\
 e_{2n}^2 =& e_n+e_{n+1}+e_{n+2}
 \end{eqnarray*}
 Then $A$ is simple.
\end{example}

\begin{remark}
\rm
An evolution algebra $A$ whose associated graph (relative to a natural basis) has sinks cannot be simple. The reason is that a sink corresponds to an element in a natural basis of zero square, hence to an element in the annihilator of $A$. By Proposition \ref{simple}, every simple evolution algebra has to be non-degenerate.
\end{remark}
\medskip

\begin{corollary}\label{simple-mat}
Let $A$ be a finite-dimensional evolution algebra of dimension $n$ and $B= \{e_i \ \vert \ i \in \Lambda\}$ a natural basis of $A$. Then $A$ is simple if and only if the determinant of the structure matrix $M_B(A)$ is non-zero and $B$ cannot be reordered in such a way that the corresponding structure matrix is as follows:
\begin{equation*}
 M':=\left(
\begin{array}{cc}
W_{m\times m} & U_{m\times (n-m)} \\
0_{(n-m)\times m} & Y_{(n-m)\times (n-m)}
\end{array}
\right),
\end{equation*}
for some $m\in \mathbb N$ with $m<n$ and matrices $W_{m\times m},$\textit{\ }$U_{m\times
(n-m)}$\textit{\ and }$Y_{(n-m)\times (n-m)}.$
\end{corollary}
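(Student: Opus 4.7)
The plan is to derive Corollary \ref{simple-mat} as a direct consequence of Proposition \ref{simple} by translating its three conditions (a), (b), (c) into matrix conditions on $M_B(A)$.

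First I would observe that in the finite-dimensional case Proposition \ref{simple} is an equivalence, so $A$ is simple if and only if conditions (a), (b), (c) hold. Next I would reformulate (a) and (b) jointly: since $\dim A = n$, condition (b) says that the $n$ vectors $\{e_i^2\}_{i\in\Lambda}$ span $A$, which is equivalent to their linear independence, i.e.\ to $\det M_B(A)\neq 0$; note that $\det M_B(A)\neq 0$ automatically forces every $e_i^2\neq 0$, so (a) is subsumed. Thus (a) and (b) together are equivalent to $\det M_B(A)\neq 0$.

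Assuming now $\det M_B(A)\neq 0$, the remaining task is to prove that condition (c) fails precisely when $B$ can be reordered so that $M_B(A)$ acquires the block form $M'$. For the ``if'' direction, suppose after some reordering $M_B(A)=M'$ with the lower-left $(n-m)\times m$ block zero. Then $\det W\cdot \det Y=\det M'\neq 0$, so $W$ is invertible and $\{e_i^2 : i\le m\}$ is a basis of $\mathrm{lin}\{e_1,\dots,e_m\}$. The zero block says $e_i^2\in \mathrm{lin}\{e_1,\dots,e_m\}$ for every $i\le m$, and since $e_ie_j=0$ for $i\ne j$, $\mathrm{lin}\{e_1,\dots,e_m\}=\mathrm{lin}\{e_i^2 : i\in\Lambda'\}$ with $\Lambda'=\{1,\dots,m\}$ is a non-zero ideal of $A$ with $|\Lambda'|=m<n=|\Lambda|$, contradicting (c).

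For the converse direction, suppose (c) fails, so there exists a non-empty $\Lambda'\subsetneq\Lambda$ such that $I:=\mathrm{lin}\{e_i^2 : i\in\Lambda'\}$ is a non-zero ideal. Using $\det M_B(A)\ne 0$, the full set $\{e_k^2\}_{k\in\Lambda}$ is a basis of $A$, hence $e_j^2\in I$ if and only if $j\in\Lambda'$. For $i\in\Lambda'$ and $j\in\Lambda\setminus\Lambda'$, Proposition \ref{Desce}(i) (or a direct computation) gives $e_je_i^2=\omega_{ji}e_j^2\in I$, and since $e_j^2\notin I$ we conclude $\omega_{ji}=0$. Reordering $B$ to list the indices of $\Lambda'$ first then places all zeros in the lower-left $(n-|\Lambda'|)\times |\Lambda'|$ block of $M_B(A)$, giving the forbidden block form with $m=|\Lambda'|$.

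There is no substantial obstacle; the only point requiring care is the identification of (a)+(b) with invertibility of $M_B(A)$ and the bookkeeping of which rows/columns correspond to $\Lambda'$ under reordering. The essential mechanism is that a natural-basis ideal spanned by squares forces vanishing of the off-block structure constants, which is exactly the algebraic content of the block-triangular shape.
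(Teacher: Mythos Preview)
Your proposal is correct and follows essentially the same approach as the paper: both derive the corollary from Proposition~\ref{simple} by identifying conditions (a)+(b) with $\det M_B(A)\neq 0$ and condition (c) with the absence of the block-triangular form. Your treatment of the step ``(c) fails $\Rightarrow$ block form'' is in fact more careful than the paper's, which tacitly replaces $\mathrm{lin}\{e_i^2 : i\in\Lambda'\}$ by $\mathrm{lin}\{e_i : i\in\Lambda'\}$ without justification; your use of the linear independence of the $e_i^2$ and the computation $e_je_i^2=\omega_{ji}e_j^2$ makes this identification explicit.
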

\begin{proof}
If $A$ is simple then, by Proposition \ref{simple}, $A={\rm lin}\{e_{i}^{2}\ \vert \ i\in \Lambda \}$.
This means that the determinant of $M_B(A)$ is non-zero. To see the other condition, take into account that a reordering of the basis $B$ producing a matrix as $M'$ would imply that $A$ has a proper ideal of dimension $m\geq 1$, a contradiction as we are assuming that $A$ is simple.

Conversely, if $\vert M_B(A) \vert \neq 0$, then $A$ is generated by the linear span of $\{e^2 \ \vert \ e\in B\}$. On the other hand, $A$ cannot be degenerate as, otherwise, ${\rm ann}(A) = {\rm lin}\{e \in B\ \vert \ e^2=0\}$ (see Proposition \ref{degen}). Decompose $B$ as $B= B_0 \sqcup B_1$, where $B_0=\{e \in B\ \vert \ e^2=0\}$ and $B_1 = B \setminus B_0$ and let $B'$ be a reordering of $B$ in such a way that the first elements correspond to the elements of $B_0$ and the rest to the elements of $B_1$. Then $M_{B'}(A)$ is as matrix $M'$ in the statement, a contradiction.
We have shown that $A$ satisfies conditions (a) and (b) in Proposition \ref{simple} (ii). Now we see that condition (c) is also satisfied. Assume that $\Lambda'\subseteq \Lambda$ is such that ${\rm lin}\{e_i\ \vert \ i\in \Lambda'\}$ is a non-zero ideal of $A$. If we reorder $B$ in such a way that the first elements are in $\{e_i\ \vert \ i\in \Lambda'\}$, then the corresponding structure matrix is as  $M'$ in the statement, a contradiction. Now use Proposition \ref{simple} to prove that $A$ is simple.
\end{proof}

Next we characterize simple evolution algebras of arbitrary dimension.

\begin{theorem}\label{CharacSimple}
Let $A$ be a non-zero evolution algebra and $B=\{e_i\ \vert\ i \in \Lambda\}$ a natural basis. The following conditions are equivalent.
\begin{enumerate} [\rm (i)]
\item\label{algo1} $A$ is simple.
\item\label{algo2}  If ${\rm lin}\{e_i^2 \ \vert \ i \in \Lambda'\}$ is an
ideal for a nonempty subset $\Lambda'\subseteq \Lambda$, then $A={\rm lin}\{e_i^2 \ \vert \ i \in \Lambda'\}$.
\item\label{algo3}  $A= \langle e_i^2\rangle = {\rm lin}\{e_j^2 \ \vert \ j\in D(i)\}$ for every $i \in \Lambda$.
\item\label{algo4}  $A= {\rm lin}\{e_i^2 \ \vert \ i \in \Lambda\}$ and  $\Lambda = D(i)$ for every $i \in \Lambda$.
\end{enumerate}
\end{theorem}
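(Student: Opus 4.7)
The plan is to prove the equivalence through the cycle (\ref{algo1}) $\Rightarrow$ (\ref{algo2}) $\Rightarrow$ (\ref{algo3}) $\Rightarrow$ (\ref{algo4}) $\Rightarrow$ (\ref{algo1}), exploiting at each step the description of ideals generated by a single basis element given in Proposition \ref{cuadrado}.

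For (\ref{algo1}) $\Rightarrow$ (\ref{algo2}): given a nonempty $\Lambda'\subseteq \Lambda$ such that ${\rm lin}\{e_i^2\mid i\in \Lambda'\}$ is an ideal, I would first use Proposition \ref{simple} to deduce that $A$ is non-degenerate, so every $e_i^2$ is non-zero and hence ${\rm lin}\{e_i^2\mid i\in \Lambda'\}$ is a non-zero ideal; simplicity forces it to equal $A$. For (\ref{algo2}) $\Rightarrow$ (\ref{algo3}): I would first observe that (\ref{algo2}) implies non-degeneracy, because otherwise taking $\Lambda'=\{i_0\}$ with $e_{i_0}^2=0$ would give the zero ideal, forcing $A=0$, contradicting the standing hypothesis. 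Then for any $i\in\Lambda$ I would pick $k\in D^1(i)$ (which exists by non-degeneracy) and apply Proposition \ref{cuadrado}\,(i)(c) to $e_k^2$, noting that $D(k)\cup\{k\}\subseteq D(i)$; since $\langle e_k^2\rangle={\rm lin}\{e_j^2\mid j\in D(k)\cup\{k\}\}$ is a non-zero ideal of the form prescribed in (\ref{algo2}), (\ref{algo2}) yields $A=\langle e_k^2\rangle\subseteq {\rm lin}\{e_j^2\mid j\in D(i)\}\subseteq A$, proving the equality and showing in particular that $A=\langle e_i^2\rangle$.

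For (\ref{algo3}) $\Rightarrow$ (\ref{algo4}): the identity $A={\rm lin}\{e_j^2\mid j\in D(i)\}\subseteq{\rm lin}\{e_j^2\mid j\in \Lambda\}$ handles the first half immediately. To obtain $\Lambda=D(i)$, fix $i\in\Lambda$ and $k\in\Lambda$; since $\{e_j\}_{j\in\Lambda}$ is a basis and $e_k\in A={\rm lin}\{e_j^2\mid j\in D(i)\}$, writing $e_k=\sum_{j\in D(i)}\mu_j e_j^2=\sum_{j\in D(i)}\mu_j\sum_{l}\omega_{lj}e_l$ and comparing the coefficient of $e_k$ gives $\sum_{j\in D(i)}\mu_j\omega_{kj}=1$, so some $j\in D(i)$ satisfies $\omega_{kj}\neq 0$, i.e. $k\in D^1(j)\subseteq D(i)$. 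This is the step I expect to be the most delicate, because the spanning family $\{e_j^2\}_{j\in D(i)}$ need not be a basis, so the argument must work with dependence relations rather than with a unique expansion.

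For (\ref{algo4}) $\Rightarrow$ (\ref{algo1}): non-degeneracy follows from (\ref{algo4}) because $e_i^2=0$ would give $D(i)=\emptyset$, contradicting $\Lambda=D(i)$ with $\Lambda\neq\emptyset$. To prove simplicity, take a non-zero ideal $I$ and a non-zero $u=\sum \alpha_i e_i\in I$ with $\alpha_{i_0}\neq 0$; then $e_{i_0}u=\alpha_{i_0}e_{i_0}^2\in I$, so $\langle e_{i_0}^2\rangle\subseteq I$, and Proposition \ref{cuadrado}\,(i)(c) combined with $\Lambda=D(i_0)$ yields $\langle e_{i_0}^2\rangle={\rm lin}\{e_j^2\mid j\in\Lambda\}=A$. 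Finally, $A^2\neq 0$ since $A={\rm lin}\{e_i^2\mid i\in\Lambda\}$ is non-zero by non-degeneracy, so $A$ is simple.
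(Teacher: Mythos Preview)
Your proof is correct and follows essentially the same cycle as the paper, using Proposition~\ref{cuadrado} at the key points. The only minor difference is in (\ref{algo2}) $\Rightarrow$ (\ref{algo3}): the paper applies (\ref{algo2}) directly to $\Lambda'=D(i)\cup\{i\}$ (which is nonempty regardless of whether $e_i^2=0$), obtains $A=\langle e_i^2\rangle$, and then argues $i\in D(i)$ a posteriori from $e_i\in A$; your detour through a first descendant $k\in D^1(i)$ works but is not needed, since (\ref{algo2}) makes no non-zero hypothesis on the ideal. Your explicit coefficient comparison in (\ref{algo3}) $\Rightarrow$ (\ref{algo4}) is exactly what underlies the paper's terse ``it follows that $j\in D(i)$''.
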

\begin{proof}
(\ref{algo1}) $\Rightarrow$ (\ref{algo2}). If $A$ is simple, then it is non-degenerate by Proposition \ref{simple} and hence, $ {\rm lin}\{e_i^2 \ \vert \ i \in \Lambda'\}$ is a nonzero ideal of $A$, so that the result follows.

(\ref{algo2}) $\Rightarrow$ (\ref{algo3}). By Proposition \ref{cuadrado} (ii) we have $ \langle e_i^2\rangle = {\rm lin}\{e_j^2 \ \vert \ j\in D(i)\cup \{i\}\}$. By (\ref{algo2}), this set is $A$. Since $e_i\in A=  \langle e_i^2\rangle$ we have $i \in D(i)$ and (\ref{algo3}) has being proved.

(\ref{algo3}) $\Rightarrow$ (\ref{algo4}). Since $D(i) \subseteq \Lambda$, we have $A= {\rm lin}\{e_i^2 \ \vert \ i \in \Lambda\}.$ Now, take $j\in \Lambda$. Then $e_j\in A= \langle e_i^2\rangle = {\rm lin}\{e_k^2 \ \vert \ k\in D(i)\}$ (by (ii)). It follows that $j\in D(i)$ and therefore $\Lambda \subseteq D(i)$.

(\ref{algo4}) $\Rightarrow$ (\ref{algo1}). Let $I$ be a nonzero ideal of $A$.
Since $Ie_i\neq 0$ for some $i\in \Lambda$, then $e_i^2\in I$ and so $I\supseteq \langle e_i^2 \rangle= A$ (by (\ref{algo4})).
\end{proof}

The two conditions in Theorem \ref{CharacSimple} (\ref{algo4}) are not redundant as we see in the next examples.

\begin{examples}
\rm
Consider the evolution algebra $A$ with natural basis $\{e_1, e_2, e_3\}$ and product given by $e_1^2 = e_3^2 =e_1 + e_2$; $e_2^2 = e_3$. Then $\{1, 2, 3\}= D(i)$ for every $i\in \{1, 2, 3\}$ but ${\rm lin}\{e_i^2\ \vert \ i=1, 2, 3\}= {\rm lin}\{e_1+e_2, e_3\}\neq A$.

On the other hand, consider the evolution algebra $A$ given in Example \ref{infinito}. Then $A= {\rm lin}\{e_i^2 \ \vert \ i\in \N\}$ but $\N \neq D(i)$ for every $i\in \N$.
\end{examples}

Now we show that in Theorem \ref{CharacSimple} (\ref{algo3}) the hypothesis ``for every $i\in \Lambda$'' cannot be eliminated.

\begin{example}
\rm
Let $A$ be the evolution algebra with natural basis $B=\{e_n \ \vert \ n \in \N\}$ and product given by:
\begin{displaymath}
{
\begin{array}{lll}
e_1^2&  =&  0 \\
e_2^2  &=&e_3+e_5     \\
e_4^2&=&e_1+e_3+e_5\\
e_6^2&=&e_5+e_7    \\
e_8^2&=&e_3+e_5+e_7      \\
& \vdots &
\end{array}
}
\qquad
{
\begin{array}{lll}
\\
e_3^2&=&e_4+e_6\\
        e_5^2&=&e_2+e_4+e_6\\
           e_7^2&=&e_6+e_8\\
      e_9^2&=&e_4+e_6+e_8\\
& \vdots &
      \end{array}
      }
\end{displaymath}
Then $A={\rm lin}\{e_i^2\} $ for every $ i \in \N$,  but $A$ is not simple as it is not non-degenerate.
\end{example}

Another characterization of simplicity for finite dimensional evolution algebras is the following.

\begin{corollary}\label{CharSimpleFin}
If $A$ is a finite dimensional evolution algebra and $B$ a natural basis, then $A$ is simple if and only if $\vert M_B(A)\vert\neq 0$ and $\Lambda = D(i)$ for every $i \in \Lambda$.
\end{corollary}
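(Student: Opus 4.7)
The plan is to derive this corollary as a direct consequence of Theorem \ref{CharacSimple} by translating the spanning condition $A={\rm lin}\{e_{i}^{2}\ \vert \ i\in \Lambda \}$ into the determinantal condition $\vert M_B(A) \vert \neq 0$ that is available in the finite-dimensional setting.

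First I would observe that, by definition of the structure matrix $M_B(A)=(\omega_{ki})$ with $e_{i}^{2}=\sum_k \omega_{ki}e_{k}$, the $i$-th column of $M_B(A)$ is precisely the coordinate vector of $e_{i}^{2}$ relative to $B$. Since $A$ has finite dimension $n=\vert \Lambda \vert$, the family $\{e_{i}^{2}\ \vert \ i\in \Lambda\}$ spans $A$ if and only if the $n$ columns of $M_B(A)$ form a basis of $\mathbb{K}^n$, equivalently, $\vert M_B(A)\vert \neq 0$.

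Next I would invoke the equivalence (\ref{algo1}) $\Leftrightarrow$ (\ref{algo4}) of Theorem \ref{CharacSimple}: $A$ is simple if and only if both $A={\rm lin}\{e_{i}^{2}\ \vert \ i\in \Lambda \}$ and $\Lambda = D(i)$ for every $i \in \Lambda$ hold. Combining this equivalence with the determinantal translation above yields exactly the statement of the corollary.

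I do not anticipate any real obstacle here: the whole argument is a one-step translation made possible by the finiteness of the dimension, and the substantive content already lies in Theorem \ref{CharacSimple}. An alternative route would be to derive the result from Corollary \ref{simple-mat} by verifying that the absence there of the block-triangular form $M'$ is equivalent to having $\Lambda = D(i)$ for every $i \in \Lambda$ (a reordering producing such a block form would make the top-right block $U$ correspond to descendants leaving a proper invariant subset, contradicting $\Lambda = D(i)$ for any index in that subset, and conversely). However, going directly through Theorem \ref{CharacSimple} seems more economical.
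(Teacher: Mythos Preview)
Your proposal is correct and follows essentially the same approach as the paper's proof: both apply Theorem \ref{CharacSimple} (iv) together with the observation that, in finite dimension, $A={\rm lin}\{e_i^2\ \vert\ i\in\Lambda\}$ is equivalent to $\vert M_B(A)\vert\neq 0$.
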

\begin{proof}
Apply Theorem \ref{CharacSimple} (iv) taking into account that finite dimensionality of $A$ implies that $A= {\rm lin}\{e_i^2 \ \vert \ i \in \Lambda\}$ if and only if $\vert M_B(A)\vert\neq 0$.
\end{proof}

\begin{remark}
\rm
In terms of  graphs, the condition ``$\Lambda =  D(i)$" in Theorem \ref{CharacSimple} (iv) means that the graph associated to $A$ relative to a natural basis $B$ is cyclic, in the sense that given two vertices there is always a path from one to the other one.
\end{remark}

\medskip

The following remark shows how to get ideals in non-simple evolution algebras.

\begin{remark}
\rm
If $A$ is a non-degenerate evolution algebra having a natural basis $B=\{e_{i}\ \vert \ i\in
\Lambda \}$ such that every element $i\in \Lambda $ is a
descendent of every $j\in \Lambda$, then $A$ is not simple if and only if ${\rm lin}\{e_{i}^{2}\ \vert \ i\in \Lambda \}$ is a proper ideal of $A$.
\end{remark}


\section{Decomposition of an evolution algebra into a direct sum of evolution ideals}\label{DirectSum}


In this section we characterize the decomposition of any non-degenerate evolution algebra into direct summands as well as the non-degenerate irreducible evolution algebras in terms of the associated graph (relative to a natural basis).
When the graph associated to an evolution algebra and to a natural basis is non-connected then it gives a decomposition of the algebra into direct summands.
We define the optimal direct-sum decomposition of an evolution algebra and prove its existence and unicity when the algebra is non-degenerate.


When the algebra is finite dimensional we determine those elements in the associated graph relative to a natural basis (respectively in the algebra) which generate a decomposition into direct summands.

A decomposition of an evolution algebra can be seen, biologically, as a disjoint union of families of genotypes, each of these families reproduces only with the single individuals of the proper family.

\subsection{Reducible evolution algebras.}

In this subsection we are interested in the study of those evolution algebras which can be written as direct sums of (evolution) ideals.
\medskip
\begin{definition}\label{sumaDir}
\rm
Let $\{A_\gamma\}_{\gamma\in \Gamma}$ be a nonempty family of evolution $\mathbb K$-algebras. We define the \emph{direct sum} of these evolution algebras and denote it by $A:=\oplus_{\gamma\in \Gamma}A_\gamma$ with the following operations:
given $a=\dsum\limits_{\gamma\in \Gamma}a_\gamma,\ b=\dsum\limits_{\gamma\in \Gamma}b_\gamma\in A$ and $\alpha \in \mathbb K$ (note that $a_\gamma$ and $b_\gamma$ are zero for almost every $\gamma \in \Gamma$), define

\begin{equation*}
a+b:=\dsum\limits_{\gamma\in \Gamma}\left( a_\gamma+b_\gamma\right),\qquad
\alpha a:=\dsum\limits_{\gamma\in \Gamma}\left(\alpha a_\gamma\right), \qquad
ab:=\dsum\limits_{\gamma\in \Gamma}\left(a_\gamma b_\gamma\right).
\end{equation*}
Note that $A$ is an evolution algebra as, if $B_\gamma$ is a natural basis of $A_\gamma$ for every $\gamma\in \Gamma$, then  $B:= \cup_{\gamma\in \Gamma}B_\gamma$ is a natural basis of $A$. Here, by abuse of notation, we understand $A_\gamma \subseteq A$ so that every $A_\gamma$ can be regarded as an (evolution) ideal of $A$. Moreover, for $\gamma \neq \mu$, the ideals $A_\gamma$ and $A_\mu$ are orthogonal, in the sense that $A_\gamma A_\mu = 0.$
\end{definition}

\medskip

\begin{lemma}\label{reduci}
Let $A$ be an evolution algebra. The following assertions are
equivalent:
\begin{enumerate}[\rm (i)]
\item There exists a family of evolution subalgebras $\{A_{\gamma}\}_{\gamma\in\Gamma}$ such that
$A=\oplus_{\gamma\in\Gamma}A_\gamma.$

\item  There exists a family of evolution ideals $\{I_{\gamma}\}_{\gamma\in\Gamma}$ such that
$A=\oplus_{\gamma\in\Gamma}I_\gamma.$

\item  There exists a family of  ideals $\{I_{\gamma}\}_{\gamma\in\Gamma}$ such that
$A=\oplus_{\gamma\in\Gamma}I_\gamma.$
\end{enumerate}
\end{lemma}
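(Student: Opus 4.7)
The plan is to dispatch first the trivial implications: by Definition \ref{evoide} every evolution ideal is simultaneously an evolution subalgebra and an ideal, whence $(\mathrm{ii})\Rightarrow(\mathrm{i})$ and $(\mathrm{ii})\Rightarrow(\mathrm{iii})$ are immediate. The content of the lemma therefore lies in the two converses $(\mathrm{i})\Rightarrow(\mathrm{ii})$ and $(\mathrm{iii})\Rightarrow(\mathrm{ii})$.

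For $(\mathrm{i})\Rightarrow(\mathrm{ii})$ the key observation is that the notion of direct sum of algebras fixed in Definition \ref{sumaDir} forces the summands to be pairwise orthogonal: from $A=\oplus_{\gamma\in\Gamma}A_\gamma$ in that sense one has $A_\gamma A_\mu=0$ whenever $\gamma\neq\mu$. Combined with $A_\gamma^{\,2}\subseteq A_\gamma$ (since each $A_\gamma$ is a subalgebra), this yields $A_\gamma A\subseteq A_\gamma$, so that each $A_\gamma$ is actually an ideal of $A$. Being already an evolution subalgebra, $A_\gamma$ possesses a natural basis and is therefore an evolution ideal in the sense of Definition \ref{evoide}.

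For $(\mathrm{iii})\Rightarrow(\mathrm{ii})$ one must produce a natural basis on each summand. Orthogonality is now automatic: for $\gamma\neq\mu$ the ideal condition gives $I_\gamma I_\mu\subseteq I_\gamma\cap I_\mu=0$. Set $J_\gamma:=\oplus_{\mu\neq\gamma}I_\mu$, which is an ideal of $A$ as a sum of ideals, so by Lemma \ref{quotient} the quotient $A/J_\gamma$ is an evolution algebra. The linear projection $\pi_\gamma\colon A\to I_\gamma$ defined by $\sum_\mu a_\mu\mapsto a_\gamma$ is surjective with kernel $J_\gamma$, and the orthogonality just noted shows that $\pi_\gamma(ab)=a_\gamma b_\gamma=\pi_\gamma(a)\pi_\gamma(b)$, so $\pi_\gamma$ is an algebra homomorphism. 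Thus $I_\gamma\cong A/J_\gamma$ as algebras, and $I_\gamma$ inherits a natural basis, i.e.\ is an evolution ideal.

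The only potential obstacle is interpretational: one needs to agree that the internal decomposition in (i) is meant in the multiplicative sense of Definition \ref{sumaDir} (without this, subalgebras in direct sum would not need to be ideals). Granted that convention, the rest of the argument is routine bookkeeping with orthogonality and the quotient construction of Lemma \ref{quotient}.
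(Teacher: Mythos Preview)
Your proof is correct and follows essentially the same route as the paper. The paper organizes the argument as a cycle $(\mathrm{i})\Rightarrow(\mathrm{ii})\Rightarrow(\mathrm{iii})\Rightarrow(\mathrm{i})$ rather than your hub-and-spoke through $(\mathrm{ii})$, but the substantive steps coincide: the implication from evolution subalgebras to evolution ideals rests on the orthogonality built into Definition~\ref{sumaDir}, and the passage from mere ideals to evolution ideals uses the isomorphism $I_\gamma\cong A/\bigl(\oplus_{\mu\neq\gamma}I_\mu\bigr)$ together with Lemma~\ref{quotient} (the paper invokes Corollary~\ref{coroquotient} at this point, which packages the same observation). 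Your explicit verification that the projection $\pi_\gamma$ is multiplicative, and your closing remark on the interpretational role of Definition~\ref{sumaDir} in $(\mathrm{i})$, are useful additions that the paper leaves implicit.
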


\begin{proof}
(i) $\Rightarrow$ (ii). By the definition of direct sum of evolution algebras (see Definition \ref{sumaDir}), every $A_\gamma$ is, in fact, an evolution ideal.

(ii) $\Rightarrow$ (iii) is a tautology.

(iii) $\Rightarrow$ (i). Suppose  $A=\oplus_{\gamma\in\Gamma}I_\gamma$, where each $I_\gamma$ is an ideal of $A$. For  $\mu\in \Gamma$ we have:
$$I_\mu \cong A/\left(\oplus_{\gamma\in\Gamma\setminus\{\mu\}}I_\gamma\right).$$

\noindent
By Lemma \ref{quotient} we obtain that $A/\left(\oplus_{\gamma\in\Gamma\setminus\{\mu\}}I_\gamma\right)$ is an evolution algebra, and hence $I_\mu$ is an evolution algebra  by Corollary \ref{coroquotient}.
\end{proof}

\begin{definition}
\rm
 A \emph{reducible evolution algebra} is an evolution algebra $A$
which can be decomposed as the direct sum (in the sense of Definition \ref{sumaDir}) of two non-zero evolution algebras, equivalently, of two non-zero evolution ideals, equivalently, of two non-zero ideals, as shown in Lemma \ref{reduci}. An evolution algebra which is not reducible will be called \emph{irreducible}.
\end{definition}

Reducibility of an evolution algebra is related to the connection of the underlying graphs, as we show next.
For the description of the (existent) connected components of a graph see, for example, \cite[Definitions 1.2.13]{AASbook}.

\medskip

\begin{proposition}\label{CompCon}
Let $A$ be a non-zero evolution algebra and $E$ its associated graph relative to a natural basis $B=\{e_i \ \vert \ i\in \Lambda\}$.
\begin{enumerate}[\rm (i)]

\item
Assume $E=E_1\sqcup E_2$, where $E_1$ and $E_2$ are nonempty subgraphs of $E$. Write $E_k^0=\{v_i \ \vert \ i \in \Lambda_k\}$, for $k=1, 2$, where $\Lambda_k \subseteq \Lambda$ and $\Lambda = \Lambda_1 \sqcup \Lambda_2$. Then there exist non-zero evolution ideals $I_1, I_2$ of $A$ such that $A= I_1 \oplus I_2$ and  $E_1,$ $E_2$ are the graphs associated  to the evolution algebras $I_1$ and $I_2$, respectively, relative to their natural basis  $B_k=\{e_i \ \vert \ i \in \Lambda_k\}$ (for $k=1, 2$). Moreover,  $B=B_1 \sqcup B_2$.

\item Let $E=\sqcup_{\gamma\in \Gamma}E_\gamma$ be the decomposition of $E$ into its connected components. For every $\gamma \in \Gamma$, write $E_\gamma^0= \{v_i \ \vert \ i \in \Lambda_\gamma\}$, where $\Lambda_\gamma \subseteq \Lambda$ and $\Lambda =\sqcup_{\gamma\in \Gamma} \Lambda_\gamma$. Then there exist  $\{I_\gamma\}_{\gamma\in \Gamma}$, evolution ideals of $A$, such that $A=\oplus_{\gamma\in \Gamma}I_\gamma$ and $E_\gamma$ is  the associated graph to the evolution algebra $I_\gamma$ relative to the natural basis $B_\gamma$ described below.
Moreover:
 \begin{enumerate}[\rm (a)]
\item $B=\sqcup_{\gamma\in \Gamma} B_\gamma$, where $B_\gamma=\{e_i \ \vert \ i \in \Lambda_\gamma\}$ is a natural basis of $I_\gamma$, for every $\gamma \in \Gamma$.

\item $I_\gamma$ is a simple evolution algebra if and only if $I_\gamma = {\rm lin}\{e_i^2 \  \vert \ i \in \Lambda_\gamma\}$ and $D(i) = \Lambda_\gamma$ for every $i\in \Lambda_\gamma$.

\item $A$ is non-degenerate if and only if every $I_\gamma$ is a non-degenerate evolution algebra.
\end{enumerate}
\end{enumerate}
\end{proposition}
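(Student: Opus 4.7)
The plan is to prove (i) by directly exploiting the dictionary between the graph and the structure matrix established in Definition \ref{EvolGrafo}, and then to obtain (ii) by applying the same construction to each connected component, invoking Theorem \ref{CharacSimple} and Proposition \ref{degen} for parts (b) and (c).

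For (i), the decisive observation is that $E = E_1 \sqcup E_2$ means there is no edge of $E$ with source in $\Lambda_k$ and range in $\Lambda_\ell$ for $k \neq \ell$; equivalently, $\omega_{ji} = 0$ whenever $i \in \Lambda_k$ and $j \in \Lambda_\ell$ with $k \neq \ell$. Hence, for each $i \in \Lambda_k$, the expansion $e_i^2 = \sum_{j \in \Lambda}\omega_{ji} e_j$ actually lies in $\operatorname{lin}\{e_j : j \in \Lambda_k\}$. I would then define $I_k := \operatorname{lin}\{e_i : i \in \Lambda_k\}$ with natural basis $B_k$, note that $I_k$ is closed under the product (and is an evolution subalgebra), and verify that $I_k$ is an ideal by the explicit formula $ab = \sum_i \alpha_i \beta_i e_i^2$, which forces $I_1 A \subseteq I_1$ and $I_1 I_2 = 0$. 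The decomposition $A = I_1 \oplus I_2$ as vector spaces is immediate from $B = B_1 \sqcup B_2$, and because the structure constants of $I_k$ relative to $B_k$ are exactly the entries $\omega_{ji}$ with $i,j \in \Lambda_k$, the associated graph of $(I_k, B_k)$ recovers $E_k$.

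For (ii), I would apply the construction of (i) coordinate by coordinate to the connected-components decomposition $E = \sqcup_{\gamma \in \Gamma} E_\gamma$: since there are no edges between distinct components, each $e_i^2$ with $i \in \Lambda_\gamma$ stays in $\operatorname{lin}(B_\gamma)$, so $I_\gamma := \operatorname{lin}\{e_i : i \in \Lambda_\gamma\}$ is an evolution ideal, and $A = \oplus_{\gamma \in \Gamma} I_\gamma$ with $B = \sqcup_{\gamma \in \Gamma} B_\gamma$ giving (a). For (b), the key small remark is that the descendent set of $i \in \Lambda_\gamma$ computed inside $I_\gamma$ coincides with $D(i)$ computed in $A$, because first-generation descendents of $i$ never leave $\Lambda_\gamma$; then (b) is exactly the translation of Theorem \ref{CharacSimple} (iv) to the evolution algebra $I_\gamma$ with natural basis $B_\gamma$. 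For (c), Proposition \ref{degen}(i) gives $\operatorname{ann}(A) = \operatorname{lin}\{e_i \in B : e_i^2 = 0\} = \bigoplus_{\gamma \in \Gamma}\operatorname{ann}(I_\gamma)$, and Corollary \ref{MVM} identifies non-degeneracy of any of these algebras with the vanishing of its annihilator, yielding the asserted equivalence.

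I do not expect a serious obstacle: the argument is essentially the translation between the block structure of the structure matrix $M_B$ and a disconnection of $E$. The only point one must take some care with is the compatibility of descendents across the inclusion $I_\gamma \hookrightarrow A$ used in (b); once that is noted, the rest is immediate bookkeeping over the already proved results cited above.
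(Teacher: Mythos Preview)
Your proposal is correct and follows essentially the same approach as the paper: define $I_k=\operatorname{lin}\{e_i : i\in\Lambda_k\}$ using the block structure forced by the disconnection of $E$, and then invoke Theorem~\ref{CharacSimple}(iv) for (b) and Proposition~\ref{degen} together with Corollary~\ref{MVM} for (c). Your write-up is in fact more explicit than the paper's (which leaves the verification that $I_k$ is an ideal and the compatibility of descendents implicit), but the strategy and the cited ingredients are identical.
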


\begin{proof} (i).
Let $B=\{e_i \ \vert \ i \in \Lambda\}$. The decomposition $E=E_1 \sqcup E_2$ of $E$ into two non empty components provides a decomposition  of $A$ into ideals as follows. Denote by $v_i$, with $i\in \Lambda$, the vertices of $E$. Write $E^0=E_1^0\sqcup E_2^0$, and let $\Lambda_k\subseteq \Lambda$ be such that $\Lambda_k=\{i \in \Lambda \ \vert \ v_i\in E_k^0\}$, for $k=1, 2$. Define $I_k= {\rm lin}\{e_i \ \vert \ i\in \Lambda_k\}$. Then $A=I_1\oplus I_2$. The moreover part follows easily.

(ii). The first part can be proved as (i). Item (a) follows immediately. As for
(b), apply Theorem \ref{CharacSimple} (\ref{algo4}). To prove (c) use Proposition \ref{degen} and Corollary \ref{MVM}.
\end{proof}

\begin{remark}\label{ay}
\rm Once we have defined what an optimal direct-sum decomposition is (see Definition \ref{opti}) we can say that if $A$ is non-degenerate then $A=\oplus_{\gamma\in \Gamma}I_\gamma$ in Proposition \ref{CompCon} (ii) is the optimal direct sum decomposition of $A$, as will follow from Theorem \ref{uniopti}.
\end{remark}

In the next result we characterize when a non-degenerate evolution
algebra $A$ is reducible, giving an answer to one of our main questions in this work.

\begin{theorem}\label{caracteriz}
Let $A$ be a non-degenerate evolution algebra with a natural basis $B=\{e_i \ \vert \ i\in \Lambda\}$ and assume
that $A= \oplus_{\gamma\in \Gamma}I_\gamma$, where each $I_\gamma$ is an ideal of $A$. Then:

\begin{enumerate}[\rm (i)]
\item For every $e_i\in B$ there exists a unique $\mu\in \Gamma$ such that $e_i\in I_\mu$. Moreover, $e_i\in I_\mu$ if and only if $e_i^2\in I_\mu$.
\item There exists a disjoint decomposition of $\Lambda$, say $\Lambda=\sqcup_{\gamma\in\Gamma} \Lambda_\gamma$, such that $$I_\gamma = {\rm lin}\{e_i\ \vert \ i\in \Lambda_\gamma\}.$$
\end{enumerate}
\end{theorem}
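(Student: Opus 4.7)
The plan is to exploit the coordinate projections $\pi_\gamma : A \to I_\gamma$ associated with the decomposition $A=\oplus_{\gamma\in \Gamma} I_\gamma$, together with the fact that non-degeneracy forces the basis elements to be essentially the only vectors that annihilate the other basis elements. First I would observe that, since $A$ is commutative and each $I_\gamma$ is an ideal, for $\gamma\neq \mu$ the product $I_\gamma I_\mu$ lies in $I_\gamma\cap I_\mu=0$, so the components of the direct sum are pairwise orthogonal. A routine computation using this orthogonality shows that every $\pi_\gamma$ is an algebra homomorphism.

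Next I would put $y_{i,\gamma}:=\pi_\gamma(e_i)$, so that $e_i=\sum_{\gamma}y_{i,\gamma}$ (a finite sum with $y_{i,\gamma}\in I_\gamma$). For $i\neq j$ and any $\gamma$, the relation $y_{i,\gamma}y_{j,\gamma}=\pi_\gamma(e_ie_j)=0$, combined with orthogonality of the $I_\mu$'s, yields
$$y_{i,\gamma}\,e_j=y_{i,\gamma}\cdot\sum_\mu y_{j,\mu}=y_{i,\gamma}y_{j,\gamma}=0\qquad(j\neq i).$$
The key step is then the following use of non-degeneracy: for any $a=\sum_k \alpha_k e_k\in A$ one has $ae_j=\alpha_j e_j^2$, and since $e_j^2\neq 0$ this forces $\alpha_j=0$. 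Applied to $a=y_{i,\gamma}$ for every $j\neq i$, this gives $y_{i,\gamma}\in \mathbb K e_i$, i.e.\ $y_{i,\gamma}=\lambda_{i,\gamma}e_i$ for some $\lambda_{i,\gamma}\in\mathbb K$.

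From $e_i=(\sum_\gamma \lambda_{i,\gamma})e_i$ I obtain $\sum_\gamma \lambda_{i,\gamma}=1$, so at least one $\lambda_{i,\gamma}$ is non-zero; but if two were, say $\lambda_{i,\gamma}\neq 0\neq \lambda_{i,\mu}$ with $\gamma\neq\mu$, then $e_i\in I_\gamma\cap I_\mu=0$, which is impossible. Hence exactly one $\lambda_{i,\mu}$ is non-zero (and equals $1$), producing the unique $\mu$ required in~(i). The ``moreover'' assertion follows because $e_i\in I_\mu$ trivially forces $e_i^2\in I_\mu$, while if conversely $e_i^2\in I_\mu$ and $e_i\in I_\nu$ with $\nu\neq \mu$ then $0\neq e_i^2\in I_\mu\cap I_\nu=0$, contradicting non-degeneracy. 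For~(ii) I set $\Lambda_\gamma:=\{i\in\Lambda\ \vert\ e_i\in I_\gamma\}$; by~(i) these form a disjoint decomposition $\Lambda=\sqcup_\gamma \Lambda_\gamma$, clearly ${\rm lin}\{e_i\ \vert\ i\in \Lambda_\gamma\}\subseteq I_\gamma$, and the reverse inclusion follows by writing any $x\in I_\gamma$ in the basis $B$ and using the uniqueness of the direct-sum decomposition.

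The main obstacle is the key step in the second paragraph, where non-degeneracy is essential to conclude that $y_{i,\gamma}$ is a scalar multiple of $e_i$: without $e_j^2\neq 0$ for all $j$ there is no way to recover the support of $y_{i,\gamma}$ from the fact that $y_{i,\gamma}e_j=0$ for all $j\neq i$, and the statement does fail in the degenerate case (for instance, if the product on $A$ is identically zero, every vector subspace decomposition is a decomposition into ideals, so a given $e_i$ need not lie in any single summand).
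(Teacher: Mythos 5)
Your proof is correct, and it reaches the statement by a route organized differently from the paper's, although both arguments ultimately hinge on the same elementary computation: for $a=\sum_k\alpha_k e_k$ one has $ae_j=\alpha_j e_j^2$, so non-degeneracy lets one read off coordinates from products with basis elements. You work with the projections $\pi_\gamma\colon A\to I_\gamma$ attached to the direct sum, note that orthogonality of the summands makes them multiplicative, and deduce from $\pi_\gamma(e_i)e_j=0$ for all $j\neq i$ that each component $\pi_\gamma(e_i)$ is a scalar multiple of $e_i$; this yields (i) first, and (ii) follows by regrouping basis expansions and using that the sum is direct. The paper instead works with the coordinate projections $\pi_i$ of $A$ onto $\mathbb K e_i$ relative to $B$: if $\pi_i(I_\gamma)\neq 0$, multiplying a suitable $y\in I_\gamma$ by $e_i$ shows $e_i^2\in I_\gamma$, and since $e_i^2\neq 0$ and $I_\gamma\cap I_\mu=0$ this can occur for at most one $\gamma$; setting $\Lambda_\gamma=\{i\ \vert \ \pi_i(I_\gamma)\neq 0\}$ it then proves $I_\gamma={\rm lin}\{e_i\ \vert \ i\in\Lambda_\gamma\}$ directly (the reverse inclusion via writing $e_j=u+v$ with $u\in I_\gamma$), from which (i) is immediate. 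The two arguments are dual in flavor: you analyze the summand components of a fixed basis vector, while the paper analyzes the basis-coordinate support of a fixed summand. Your version isolates the multiplicativity of the $\pi_\gamma$ as the structural reason behind the computation and delivers (i) cleanly before (ii); the paper's version produces the span description of each $I_\gamma$, i.e.\ the content of (ii), more directly. Your closing observation about where non-degeneracy is indispensable, and the failure for the zero product, is consistent with the paper, which uses Example \ref{new} to show that the hypothesis cannot be dropped.
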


\begin{proof} We show both statements at the same time.
Let $\pi _{i}$ be the linear projection of $A$ over $\mathbb{K}e_{i}.$
We show first that  $\pi _{i}(I_\gamma)\neq 0$ implies $e_{i}^{2}\in I_\gamma,$ and hence $\pi
_{i}(I_\mu)=0$ for every $\mu\in \Gamma\setminus\{\gamma\}$. Indeed, if $\pi _{i}(I_\gamma)\neq 0$, then
there exists $y \in I_\gamma$ such that $\pi_i(y) = \alpha e_i\neq 0$ for some $\alpha\in \mathbb K$.
Multiplying by $e_i$ we get $e_iy= e_i\pi_i(y) = \alpha e_i^2\in I_\gamma$ and, therefore, $e_i^2\in I_\gamma$.
If  $\pi _{i}(I_\mu)\neq 0$ for some $\mu \in \Gamma$, reasoning as before, we get $e_i^2\in I_\mu$, and so $e_i^2 \in I_\gamma \cap I_\mu = 0$, a contradiction because we are assuming that $A$ is non-degenerate.

Define $\Lambda_\gamma : =\{ i \in \Lambda \ \vert \ \pi_i(I_\gamma)\neq 0\}$. It is easy to see that
$\cup_{\gamma\in\Gamma}\Lambda_\gamma = \Lambda$. Moreover, the first paragraph of the proof shows that this is a disjoint union, as claimed in (ii).

Now, it is easy to see that for every $\gamma \in \Gamma$ we have that $I_\gamma\subseteq {\rm lin}\{e_{i}\ \vert \ i\in \Lambda _{\gamma}\}$.
To show that ${\rm lin}\{e_{i}\ \vert \ i\in \Lambda _{\gamma}\}\subseteq I_\gamma $, consider $e_j\in B$, with $j\in \Lambda_\gamma$ and denote $J = \oplus_{\mu\in\Gamma\setminus\{\gamma\}}I_\mu$.
Because $A= I_\gamma \oplus J$ we may write  $e_j=u+v$, with $u\in I_\gamma$ and $v\in J$. Then, $v= e_j-u\in  {\rm lin}\{e_{i}\ \vert \ i\in \Lambda _{\gamma}\}$ because $e_j$ and $u$ are in ${\rm lin}\{e_{i}\ \vert \ i\in \Lambda _{\gamma}\}$. Since $v\in J \subseteq {\rm lin}\{e_{i}\ \vert \ i\in \cup_{\mu\in \Gamma \setminus\{\gamma\}}\Lambda _{\mu}\}$ we deduce that $v$ must be zero.
\end{proof}

\begin{remark}
\rm
Theorem \ref{caracteriz} gives another proof, for non-degenerate evolution algebras, of the fact that if an evolution algebra is a direct sum of ideals, then such ideals are evolution algebras (and, consequently, evolution ideals). This is the assertion (ii) $\Leftrightarrow$ (iii) established in Lemma \ref{reduci}.
\end{remark}

Another application of  Theorem \ref{caracteriz} allows us to recognize easily when a non-degenerate finite dimensional evolution algebra $A$ is reducible: if $B=\{e_{i}\ \vert \ i=1,..., n\}$ is a natural basis
of $A$ then $A$ is the direct sum of two (evolution) ideals if and only if there is a permutation $\sigma\in S_n$ such that, if $B':= \{e_{\sigma(i)}\ \vert \ i=1,..., n\}$, then the corresponding structure matrix is

$$
M_{B'} = \left(
\begin{array}{cc}
W_{m\times m} & 0_{(n-m)\times (n-m)} \\
0_{(n-m)\times m} & Y_{(n-m)\times (n-m)}
\end{array}
\right),
$$

\medskip

\noindent
for some $m\in \N$, $m < n$ and  some matrices  $W_{m\times m} $ and $Y_{(n-m)\times (n-m)}$ with entries in $\mathbb K$.
In this case $A=I\oplus J$, where $I={\rm lin}\{e_{\sigma(1)},...,e_{\sigma(m)}\}$ and $
J={\rm lin}\{e_{\sigma(m+1)},...,e_{\sigma(n)}\}$. The basis $B'$ is what we will called a \emph{reordering} of $B$.
\medskip

\begin{corollary}\label{conexo}
Let $A$ be a non-degenerate evolution algebra,  $B=\{e_i\ \vert \ i\in \Lambda\}$ a natural basis, and let $E$ be its associated graph. Then $A$ is irreducible if and only if $E$ is a connected graph.
\end{corollary}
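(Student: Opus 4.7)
The plan is to derive this corollary as a direct combination of two earlier results: Proposition \ref{CompCon}(ii), which turns a disconnection of the graph into an ideal decomposition of the algebra, and Theorem \ref{caracteriz}, which does the converse for non-degenerate evolution algebras. Neither implication should require any fresh calculation; the work is just to articulate how the index partition and the graph partition correspond.

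For the ``disconnected $\Rightarrow$ reducible'' direction, I would simply invoke Proposition \ref{CompCon}(ii): writing $E=\sqcup_{\gamma\in\Gamma} E_\gamma$ as a decomposition into connected components with $|\Gamma|\geq 2$ yields evolution ideals $I_\gamma$ with $A=\oplus_{\gamma\in\Gamma} I_\gamma$, which is a nontrivial direct-sum decomposition, so $A$ is reducible. Non-degeneracy plays no role here.

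For the ``reducible $\Rightarrow$ disconnected'' direction, suppose $A=I_1\oplus I_2$ with $I_1,I_2$ nonzero ideals. Because $A$ is non-degenerate, Theorem \ref{caracteriz} supplies a disjoint decomposition $\Lambda=\Lambda_1\sqcup\Lambda_2$ with $I_k=\mathrm{lin}\{e_i \mid i\in \Lambda_k\}$ and both $\Lambda_k$ nonempty. Recall from Definition \ref{EvolGrafo} that the graph $E$ has an edge $v_i\to v_j$ exactly when $\omega_{ji}\neq 0$, i.e.\ when $e_j$ appears in the expression of $e_i^2$. For $i\in\Lambda_1$ we have $e_i^2\in I_1$, so $e_i^2\in\mathrm{lin}\{e_k \mid k\in\Lambda_1\}$, hence every edge leaving $v_i$ stays inside $\{v_k\mid k\in\Lambda_1\}$; symmetrically for $\Lambda_2$. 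Thus no edges (in either direction) connect the two vertex sets, so the subgraphs $E_1,E_2$ induced on $\{v_i\mid i\in\Lambda_k\}$ disconnect $E$.

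The only delicate point is seeing clearly that ``no edges cross'' implies the graph is disconnected in the undirected sense used by the notion of connected component: since connectedness of a directed graph is read off its underlying undirected graph, the fact that neither direction of edge can cross the partition suffices. This is straightforward and I would not expect any real obstacle; the substance of the corollary has already been packaged into Proposition \ref{CompCon} and Theorem \ref{caracteriz}, and the hypothesis of non-degeneracy is exactly what enables Theorem \ref{caracteriz} to produce the index partition from an abstract ideal decomposition.
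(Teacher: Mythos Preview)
Your proposal is correct and follows essentially the same route as the paper: one direction via Proposition \ref{CompCon} (the paper cites part (i), you cite part (ii), but the content is the same), and the other via Theorem \ref{caracteriz} to extract the index partition $\Lambda=\Lambda_1\sqcup\Lambda_2$ from an ideal decomposition. Your write-up is in fact a bit more explicit than the paper's in spelling out why $I_k=\mathrm{lin}\{e_i\mid i\in\Lambda_k\}$ forces all edges to stay within each $\Lambda_k$, which the paper leaves implicit.
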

\begin{proof}
Suppose first that $E$ is connected. To show that $A$ is irreducible suppose, on the contrary, that there exist $I$ and $J$,  non-zero ideals of $A$, such that $A=I\oplus J$. By Theorem \ref{caracteriz} there exists a decomposition $\Lambda= \Lambda_I \sqcup \Lambda_J$ such that
$I= {\rm lin}\{e_i\ \vert \ i \in \Lambda_I\}$ and $J= {\rm lin}\{e_i\ \vert \ i \in \Lambda_J\}$. Then $E= E_I \sqcup E_J$, a contradiction since we are assuming that $E$ is connected.

The converse follows easily: by Proposition \ref{CompCon} (i),
a decomposition $E=E_1 \sqcup E_2$ into two non empty components provides a decomposition  $A=I_1\oplus I_2$, for $I_1$ and $I_2$ non-zero ideals of $A$, contradicting that $A$ is irreducible.
\end{proof}

In \cite[Proposition 2.8]{EL} the authors show the result above for finite-dimensional evolution algebras using a different approach.

\medskip

The hypothesis of non-degeneracy cannot be eliminated in Corollary \ref{conexo}.

\begin{example}\rm
Consider the evolution algebra given in Example \ref{cambio}, which is not non-degenerate. Then the graph $E$, associated to the basis $B$ is connected while the graph $F$, associated to the basis $B'$ is not.
\end{example}


\subsection{The optimal direct-sum decomposition of an evolution algebra}\label{Descomposicion}

\medskip

The aim of this subsection is to obtain a decomposition of an evolution algebra in terms of irreducible evolution ideals.

\begin{definition}\label{opti}
\rm
Let $A$ be a non-zero evolution algebra and assume that $A=\oplus_{\gamma\in\Gamma}I_\gamma$ is a direct sum of non-zero ideals. If every $I_\gamma$ is an irreducible evolution algebra, then we say that $A=\oplus_{\gamma\in\Gamma}I_\gamma$  is an
 \emph{optimal direct-sum decomposition} of $A$.
\end{definition}

We  show that the optimal direct sum decomposition of an evolution algebra $A$  with a natural basis $B=\{e_i \ \vert i \in \Lambda\}$ does exist and it is unique whenever the algebra is non-degenerate. Moreover, for finite dimensional evolution algebras (degenerated or not), we will describe how to get an optimal decomposition of $\Lambda$ through the fragmentation process. This will be done in Subsection \ref{fragme}

\begin{theorem}\label{uniopti}
Let $A$ be a non-degenerate evolution algebra. Then $A$ admits an optimal direct-sum decomposition. Moreover, it is unique.
\end{theorem}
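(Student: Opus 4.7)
The plan is to combine Proposition \ref{CompCon} with Corollary \ref{conexo} to get both existence and uniqueness from the connected-component decomposition of the graph $E_A^B$ associated to a natural basis $B=\{e_i\ \vert\ i\in\Lambda\}$ of $A$.

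For existence, I would fix any natural basis $B$ of $A$ and decompose its associated graph into connected components: $E_A^B=\sqcup_{\gamma\in\Gamma}E_\gamma$. By Proposition \ref{CompCon}(ii), this yields evolution ideals $I_\gamma$ with $A=\oplus_{\gamma\in\Gamma}I_\gamma$, each having $E_\gamma$ as its associated graph relative to the natural basis $B_\gamma=\{e_i\ \vert\ i\in\Lambda_\gamma\}$, and each $I_\gamma$ is non-degenerate because $A$ is (Proposition \ref{CompCon}(ii)(c)). Since $E_\gamma$ is connected by construction, Corollary \ref{conexo} applied to the non-degenerate $I_\gamma$ gives that $I_\gamma$ is irreducible. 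Hence $A=\oplus_{\gamma\in\Gamma}I_\gamma$ is an optimal direct-sum decomposition.

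For uniqueness, suppose $A=\oplus_{\gamma\in\Gamma}I_\gamma=\oplus_{\delta\in\Delta}J_\delta$ are two optimal decompositions. Fix a natural basis $B$. By Theorem \ref{caracteriz}, there exist partitions $\Lambda=\sqcup_{\gamma}\Lambda_\gamma=\sqcup_{\delta}M_\delta$ with $I_\gamma={\rm lin}\{e_i\ \vert\ i\in\Lambda_\gamma\}$ and $J_\delta={\rm lin}\{e_i\ \vert\ i\in M_\delta\}$. I would first verify that $\Lambda_\gamma$ is a union of connected components of $E_A^B$: since $e_j^2\in I_\gamma$ for every $j\in\Lambda_\gamma$, no edge of $E_A^B$ leaves $\Lambda_\gamma$, and applying the same reasoning to the complementary ideal $\oplus_{\mu\neq\gamma}I_\mu$ rules out edges entering $\Lambda_\gamma$ from outside. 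On the other hand, $I_\gamma$ is irreducible and non-degenerate (its natural basis $B_\gamma$ inherits non-zero squares from $B$), so by Corollary \ref{conexo} its associated graph—which is precisely the induced subgraph of $E_A^B$ on $\{v_i\ \vert\ i\in\Lambda_\gamma\}$—is connected. A connected union of connected components must be a single component, so every $\Lambda_\gamma$ is one connected component of $E_A^B$, and the same holds for every $M_\delta$. Both partitions therefore coincide with the (intrinsic) partition of $\Lambda$ into connected components of $E_A^B$, forcing $\{I_\gamma\}_{\gamma\in\Gamma}=\{J_\delta\}_{\delta\in\Delta}$.

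The step I expect to be most delicate is the identification of the graph of the direct summand $I_\gamma$ (with its natural basis $B_\gamma$) with the induced subgraph of $E_A^B$ on $\{v_i\ \vert\ i\in\Lambda_\gamma\}$, together with the closure-under-edges argument that forces $\Lambda_\gamma$ to be a union of components rather than an arbitrary subset. This is essentially bookkeeping on the structure constants, but it is the point where the non-degeneracy hypothesis genuinely enters (via Theorem \ref{caracteriz}), and skipping it would leave a gap in the passage from "$I_\gamma$ is irreducible" to "$\Lambda_\gamma$ is a single connected component".
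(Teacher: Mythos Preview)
Your existence argument matches the paper's exactly.

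For uniqueness you take a genuinely different route. The paper argues purely algebraically: given two optimal decompositions $A=\oplus_\gamma I_\gamma=\oplus_\omega J_\omega$, it picks $\gamma$ and $\omega$ with $I_\gamma\cap J_\omega\neq 0$, observes (using the basis partitions from Theorem~\ref{caracteriz}) that
\[
I_\gamma=(I_\gamma\cap J_\omega)\oplus\Bigl(I_\gamma\cap\bigoplus_{\omega'\neq\omega}J_{\omega'}\Bigr),
\]
and then irreducibility of $I_\gamma$ kills the second summand, giving $I_\gamma\subseteq J_\omega$; symmetry finishes. No further appeal to the graph is needed.

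Your approach instead shows that \emph{any} optimal decomposition must coincide with the connected-component decomposition of $E_A^B$: each $\Lambda_\gamma$ is edge-closed (hence a union of components) and, by Corollary~\ref{conexo} applied to the non-degenerate $I_\gamma$, connected, hence a single component. This is correct and has the pleasant by-product of identifying the optimal decomposition explicitly (essentially proving Remark~\ref{ay} along the way). The trade-off is that you must verify the bookkeeping you flag as delicate---that the structure matrix of $I_\gamma$ relative to $B_\gamma$ is the principal submatrix of $M_B$, so the graph of $I_\gamma$ is the induced subgraph---whereas the paper sidesteps this entirely with the intersection trick.
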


\begin{proof}
We start by showing the existence. Let $E$ be the graph associated to $A$ relative to a natural basis $B$ and decompose it in its connected components, say $E=\sqcup_{\gamma\in \Gamma} E_\gamma$. By Proposition \ref{CompCon} (ii) we have $A=\oplus_{\gamma\in \Gamma} I_\gamma$, where every $I_\gamma$ is an ideal of $A$. Note that, by construction (see the proof of Proposition \ref{CompCon}), every $I_\gamma$ has a natural basis, say $B_\gamma$, consisting of elements of the basis $B=\{e_i \ \vert \ i \in \Lambda\}$ of $A$. Because $A$ is non-degenerate, $e_i^2\neq 0$, by Corollary \ref{MVM} and Proposition \ref{degen}. Using again these results we have that  every $I_\gamma$ is a non-degenerate evolution algebra.
Since $E_\gamma$ is the graph associated to $I_\gamma$ relative to the basis $B_\gamma$ and $E_\gamma$ is connected, by Corollary \ref{conexo} every $I_\gamma$ is an irreducible evolution algebra.

Now we prove the uniqueness.
Fix a natural basis $B= \{e_i\ \vert \ i\in \Lambda\}$.
Suppose that there are two optimal direct-sum decompositions of $A$, say $A=\oplus_{\gamma\in \Gamma}I_\gamma$ and $A=\oplus_{\omega\in \Omega}J_\omega$. By  Theorem \ref{caracteriz} there exist two decompositions $\Lambda = \sqcup_{\gamma\in \Gamma}\Lambda_\gamma$ and $\Lambda = \sqcup_{\omega\in \Omega}\Lambda_\omega$ such that

$$I_\gamma = {\rm lin}\{e_i\ \vert \ i\in \Lambda_\gamma\} \quad \text{and}\quad
J_\omega = {\rm lin}\{e_i\ \vert \ i\in \Lambda_\omega\}.$$

\noindent
Take $i\in \Lambda_\gamma$ for an arbitrary $\gamma \in \Gamma$. Then there is an  $\omega\in\Omega$ such that $e_i\in J_\omega$. This means $I_\gamma\cap J_\omega\neq 0$. Decompose

$$I_\gamma = (I_\gamma \cap J_\omega)\oplus \left(I_\gamma\cap \left(\oplus_{\omega\neq \omega'\in \Omega}J_{\omega'}\right)\right).$$
Since $I_\gamma$ is irreducible and $I_\gamma\cap J_\omega\neq 0$, necessarily
$\left(I_\gamma\cap \left(\oplus_{\omega\neq \omega'\in \Omega}J_{\omega'}\right)\right)=0$. Therefore $I_\gamma = I_\gamma \cap J_\omega$ and so $I_\gamma \subseteq J_\omega$. Changing the roles of $I_\gamma$ and $J_\omega$ we get $J_\omega \subseteq I_\gamma$, implying
$I_\gamma=J_\omega$ and, consequently, that each decomposition is nothing but a reordering of the other one.
\end{proof}
\medskip

The hypothesis of non-degeneracy cannot be eliminated in order to assure the unicity of the optimal direct sum decomposition in Theorem \ref{uniopti}, as the following example shows. It is also an example which illustrates that  in Theorem \ref{caracteriz} non-degeneracy is also required.

\begin{example}\label{new}
\rm
Let $A$ be the evolution $\mathbb K$-algebra with natural basis
$B=\{e_{1},e_{2},e_{3},e_{4}, e_{5}\}$ and multiplication given by:
$e_{1}^{2}=e_{2}^{2}=e_{1},$ $e_{3}^{2}=e_{3}+e_{5}$ and
$e_{4}^{2}=e_{5}^{2}=0.$
 Then
 $A=I_{1}\oplus I_{2}\oplus I_{3}\oplus I_{4}$,  where
$I_{1}:={\rm lin}\{e_1, e_{2}+ e_{4}\}$,
$I_{2}:={\rm lin}\{e_{3}+e_{5}\}$,
$I_{3}:={\rm lin}\{e_{4}\}$ and
$I_{4}:={\rm lin}\{e_{5}\}$ are irreducible ideals, as we are going to show.

The ideals $I_{2},$ $I_{3}$ and $I_{4}$ are irreducible because their
dimension is one.  Now we prove that $I_1$ is also irreducible.
Assume, on the contrary, $I_{1}=J_{1}\oplus J_{2}$, with $J_1$ and $J_2$ non-zero ideals.
Then $\dim J_{1}=\dim J_{2}=1,$ so that $J_{1}=\mathbb Ku_{1}$ and $J_{2}=\mathbb K u_{2}$ for some $u_{1}=\alpha _{1}e_{1}+ \beta_{1}(e_{2}+e_{4})$ and
$u_{2}=\alpha _{2}e_{1}+\beta_{2}(e_{2}+e_{4})$, where $\alpha_1, \alpha_2, \beta_1, \beta_2 \in \mathbb K$.
Then, $u_1u_2 = 0$ implies $(\beta_1\beta_2+\alpha_1\alpha_2)e_1 =0$. On the other hand,
$u_1e_1=\alpha_1e_1\in J_1$ and $u_2e_1=\alpha_2e_1\in J_2$. Since $J_1\cap J_2=0$, then
$\alpha_1=0$ or $\alpha_2=0$. Assume, for example, $\alpha_1=0$. Then $J_1= \mathbb K (e_2+e_4)$, but
this is not an ideal as $(e_2+e_4)^2 = e_1$. The case $\alpha_2=0$ is similar.

 Now we give another decomposition of $A$ into irreducible ideals. Consider
 $A=J\oplus I_{2}\oplus I_{3}\oplus I_{4}$, where
 $J:={\rm lin}\{e_{1}, e_{2}\}$.  We claim that  $J$ is an irreducible ideal of $A$. Indeed, if
 $J=M_{1}\oplus M_{2}$, for $M_1$ and $M_2$ non-zero ideals, then $M_{1}=\mathbb Ku_{1}$
  and $M_{2}=\mathbb Ku_{2}$ for some
  $u_{1}=\alpha _{1}e_{1}+\beta _{1}e_{2}$ and $u_{2}=\alpha _{2}e_{1}+\beta _{2}e_{2}$,
 where $\alpha_1, \alpha_2, \beta_1, \beta_2 \in \mathbb K$.
  Then, $u_1e_1=\alpha_1e_1$ and $u_2e_1=\alpha_2e_1$. Since $M_1\cap M_2=0$, then
  $\alpha_1=0$ or $\alpha_2=0$. Assume $\alpha_1=0$. This implies $u_1=\beta_1e_2$ and $M_1= \mathbb K e_2$, but  this is not an ideal because $e_2^2=e_1$. The case $\alpha_2=0$ is similar.

  Note that we have two different decompositions of $A$ as a direct sum of irreducible ideals.
  \end{example}

As we have seen (Remark \ref{noSimple}), non-degenerate evolution algebras are not necessarily simple. On the other hand, concerning reducibility, the next example shows that there exists irreducible
evolution algebras which are not simple (while, obviously, simple evolution algebras are irreducible).

\begin{example}
\label{redus-simple}\emph{Let }$A$\emph{\ be an evolution algebra with
a natural basis }$B=\{e_{1},e_{2}\}$\emph{\ such that }$%
e_{1}^{2}=e_{2}^{2}=e_{2}.$\emph{\ Then }$\mathbb{K}e_{2}$\emph{\ is a
proper ideal of }$A.$\emph{\ However,} $A$\emph{\ is irreducible because
if }$A=I\oplus J$, \emph{ for some ideals } $I$ \emph{and } $J$\emph{ of $A$. Then, by
Theorem \ref{caracteriz}, we have either }$e_{1}\in I,$\emph{\ in which case
}$A=I,$\emph{\ or }$e_{1}\in J,$\emph{\ in which case, }$A=J.$\emph{\ In any case
$I$ or $J$ is zero.}
\end{example}

\medskip
The next definition will be helpful to understand the inner structure of an
evolution algebra.

\begin{definition}\label{ciclico}
\rm
Let $B=\{e_{i}\ \vert \ i\in \Lambda \}$ be a natural basis of an
evolution algebra $A.$ We say that $i_{0}\in \Lambda $ is
\textit{cyclic} if $i_{0}\in D(i_{0}).$ This means that $i_{0}$
is descendent (and hence ascendent) of itself$.$

In particular, if $D(i_{0})=\{i_{0}\}$ (in which case
$e_{i_{0}}^{2}=\omega _{i_{0}i_{0}}e_{i_{0}}$ for some $\omega_{i_{0}i_{0}}\in \mathbb K\setminus \{0\}$), then we say that the
cyclic index $i_{0}$ is a \textit{loop.}

If $i_{0}\in \Lambda $ is cyclic, then the
\textit{cycle associated to $i_{0}$} is defined as the set:
$$
C(i_{0}) = \{j\in \Lambda \ \vert \ j\in D(i_{0})\ \text{\rm and }i_{0}\in D(j)\}.
$$

Note that if $i_{0}$ is cyclic then
$C(i_{0})$ is non-empty because it contains $i_{0}$ in particular. Moreover,
$i_{0}$ is a loop if and only if $C(i_{0})=\{i_{0}\}.$

We say that a subset $C \subseteq \Lambda $ is a
\textit{cycle} if $C=C(i_{0}),$ for some cyclic-index $i_{0}\in \Lambda.$
\end{definition}

\begin{remark}
\rm

By identifying an index $i$ with the genotype $e_i$, biologically, an index is cyclic if it is a descendent of its descendents. The cycle associated to an index $i$ is the set of all its descendents $j$ such that $i$ is a descendent of $j$.
\end{remark}

\medskip

In the same context as in Definition \ref{ciclico}, consider $i_0\in \Lambda$, and let
$\omega _{i_{0}i_{0}}$ be the corresponding element in the structure matrix for the evolution
algebra $A$. If $\omega _{i_{0}i_{0}}\neq 0$ then we have that
$i_{0} $ is cyclic, independently of the value of the other elements in the structure
matrix. If $\omega _{i_{0}i_{0}}=0,$ then $i_{0}$ is cyclic if and only
if it is a descendent of some of its own descendents.

On the other hand, if $B=\{e_i\ \vert \ i\in \Lambda \}$ is a natural basis of
$A$,  and if $i_{1},i_{2}\in \Lambda $ are cyclic, then
we have either $C(i_{1})=C(i_{2})$ or $C(i_{1})\cap C(i_{2})=\emptyset .$

These facts can be understood more easily looking at the corresponding
graphical concepts, as we will do below.
\medskip

Now we classify the cycles into two types, depending on if they
have or not ascendents outside the cycle.

\begin{definition}\label{cicle}
\rm
Let $B=\{e_{i}\ \vert \ i\in \Lambda \}$ be a natural basis
of an evolution algebra $A,$ and let $i_{0}\in \Lambda $ be a
cyclic index. We say that $i_0$ is a \emph{principal cyclic index} if the set of
ascendents of $i_{0}$ is contained in $C(i_{0})$, the cycle associated to $i_0$. Thus, $i_{0}\in \Lambda $ is a principal cyclic-index if $i_{0}\in D(i_{0})$ and $j\in D(i_{0})$ for every $j\in \Lambda $ with $i_{0}\in D(j)$.

We say that a subset $C$ of $\Lambda $ is a \emph{principal cycle} if $C=C(i_{0})$, for some principal cyclic index
$i_{0}\in \Lambda$.
\end{definition}
\medskip

It is clear that if $i_{0}\in \Lambda $ is a principal
cyclic index then every $j\in C(i_{0})$ is also a principal
cyclic index. Moreover, if $i_{0}\in \Lambda $ is a cyclic index,
then $C(i_{0})$ is not principal if and only if there exists
$j\in \Lambda \setminus C(i_0)$ such that $i_{0}\in D(j)$.

On the other hand, a non-empty subset $C\subseteq \Lambda $
is a principal cycle if and only if it satisfies the following properties:

\begin{enumerate} [\rm (i)]
\item For every $i,j\in C$ we have that $i\in D(j)$ and $j\in D(i).$
\item If $D(k)\cap C\neq \emptyset $ then $k\in C$.
\end{enumerate}

Note that if $i_{0}$ is a loop, then $\{i_{0}\}$ is
a principal cycle if and only if $i_0$ has no other ascendents than $i_0$.
Moreover, if $C$ is a principal cycle, then
$C=C(i)=C(j)$ for every $i, j\in C$ and, hence,
$D(i)=D(j)$ for very $i,j\in C.$

\medskip

Now we will distinguish between cycles that have proper descendents from
those that do not have them.

\begin{definition}\label{principal}
\rm
Let $B=\{e_{i}\ \vert \ i\in \Lambda \}$ be a natural
basis of an evolution algebra $A$, and let $S$ be a subset
of $\Lambda$. We define the \textit{index-set derived from}
$S$ as the set given by
$$
\Lambda (S):=S\cup _{i\in S}D(i).
$$
\end{definition}

For instance, if $i\in \Lambda$, then the index set derived from $\{i\}$ is
$
\Lambda (\{i\}):=\{i\}\cup D(i),
$
where $D(i)$ is the set of descendents of $i.$

The index set derived from a principal cycle is obtained next.

\begin{remark}
\rm
Let $C$ be a principal cycle. Then, $C=C(i)=C(j)$
and $D(i)=D(j)$, for every $i,j\in C.$
Moreover, $C(i)\subseteq D(i),$ for every $i\in C$
(the inclusion may or not be strict). Thus, $C\subseteq \Lambda (C)=D(i)$ for every $i\in C.$
\end{remark}

The Definition \ref{ciclico} in terms of graphs gives rise to the following definition.

\begin{definition}\label{ciclos}
\rm
Let $E$ be a  graph with vertices $\{v_i \ \vert \ i\in \Lambda\}$ satisfying Condition (Sing).
An index $j\in \Lambda$ is  said to be  \emph{cyclic} if $j\in D^m(j)$ for some $m\in \N$ (see Definitions \ref{descendientes}). Equivalently, if the graph $E$ has a cycle $c$ such that $v_j\in c^0$.

If $j$ is a cyclic index, we define
$$C(j) : = \{v_k \in E^0\ \vert \  k \in D(j) \text{ and } j \in D(k)\},$$
that is, $C(j)$ are those vertices connected to $v_j$ such that $v_j$ is also connected to them.

A cyclic index $j$ is called \emph{principal}  (see Definition \ref{cicle}) if
$$\{v_k\in E^0 \ \vert \ j\in D(k)\} \subseteq C(j).$$
A cyclic index $j$ is \emph{principal} if and only if it belongs to a cycle without entries or such that every entry comes from a path starting at the cycle.
By extension, we will also say that $C(j)$ is a \emph{principal cycle}.
\end{definition}
\medskip

\begin{examples}\label{ej3.13}
\rm
Consider the following graphs.

$$ E:
\xymatrix{   & \bullet_{v_3} \ar@/^.0pc/[dl] \ar@/^.5pc/[dr] &  & \bullet_{v_4}\cr
            \bullet_{v_1} \ar@(dl,ul) \ar@/^.5pc/[rr]  & &\bullet_{v_2}\ar@/^.5pc/[ll] \ar@/^.5pc/[ul] \ar@/^.0pc/[ur]
            }
             \quad \quad
F:
\xymatrix{
 & & \bullet_{v_5} \ar@/^.5pc/[dl] \ar@/^.5pc/[dr] & & \cr
  \bullet_{v_1}  \ar[r]   &      \bullet_{v_2}  \ar@/^.5pc/[ur] \ar@/^.5pc/[rr]  & &\bullet_{v_3} \ar@/^.5pc/[ll] \ar@/^.5pc/[ul] &   \bullet_{v_4}\ar[l] \ar@(dr,ur)
}
$$
\vspace{.10truecm}

$$
G: \xymatrix{
 & & \bullet_{v_6} \ar@/^.5pc/[dl] \ar@/^.5pc/[dr] & & & \cr
  \bullet_{v_1}  \ar[r]   &      \bullet_{v_2}  \ar@/^.5pc/[ur] \ar@/^.5pc/[rr]  & &\bullet_{v_3} \ar@/^.5pc/[ll] \ar@/^.5pc/[ul] \ar[r] &   \bullet_{v_4}  \uloopr{}  \ar[r] &  \bullet_{v_5}&
}
$$
\vspace{.10truecm}

Concerning $E$, the indices $1, 2$ and $3$ are cyclic and $C(1)=C(2)=C(3) = \{v_1, v_2, v_3\}$. Also, $1, 2$ and $3$ are principal indices.

For the graph $F$, the cyclic indices are $2, 3, 4$ and $5$. Moreover, $C(2)=C(3)=C(5) = \{v_2, v_3, v_5\}$ and $C(4)=\{v_4\}$. The only index which is principal is  $4$.

As to the graph $G$, its cyclic indices are $2, 3, 4$ and $6$. None of them is principal.
\end{examples}

\begin{definition} \label{chain-start}
\rm
Let $B=\{e_{i}\ \vert \ i \in \Lambda \}$ be a
natural basis of an evolution algebra $A.$
 We say that $i_{0}\in \Lambda $ is a \textit{chain-start index} if $i_{0}$ has no
ascendents, i.e., $i_{0}\notin D(j),$ for every $j\in \Lambda$. Equivalently,
$i_{0}$ is a chain-start index if and only if all
the elements of the $i_{0}$-th row of the structure matrix
$M_B(A)$ are zero.
\end{definition}

\begin{remark}
\rm
In terms of graphs, an index $i_0$ is a chain-start index if and only if the vertex $v_{i_0}$ is a source.
In the graphs of Examples \ref{ej3.13}, the only chain-start index is the vertex $v_1$ in $F$ and the vertex $v_1$ in $G$.
\end{remark}

In the case of finite-dimensional evolution algebras, if the determinant of the structure matrix $M_B(A)$ is non-zero then $\Lambda $ has no chain-start indices. The (graphical) reason is that $\vert M_B(A)\vert \neq 0$ implies that $M_B(A)$ has no zero rows, hence
 the associated graph relative to $B$ has no sources.

 The lack of chain-start indices is a necessary condition for $A$ to be simple. The graphical reason is that if a vertex $v_i$ is a source, then the ideal generated by $e_i^2$ does not contains $e_i$, hence $\langle e_i^2\rangle$  is a nonzero proper ideal.

\medskip

\subsection{The fragmentation process\label{fragme}}

In this subsection we will consider only finite dimensional evolution algebras, degenerated or not. We give a process that allow to decompose an evolution algebra into direct sums of evolution algebras (the optimal decomposition when the algebra is non-degenerate).

\begin{definition}
\rm
Let $A$ be a finite dimensional evolution algebra, and fix a natural basis $B=\{e_{i}\ \vert \ i \in \Lambda \}.$
Consider the set $\{C_{1}, \dots, C_{k}\}$  of the
 principal cycles of $\Lambda$ and the set $\{i_{1}, \dots, i_{m}\}$
of all chain-start indices of $\Lambda.$

Given any $i\in \Lambda $ which is not a chain-start index, there
exists $j\in \Lambda $ such that $i\in D(j),$ and either $j$ is a
chain-start index or $j$ belong to a principal cycle (because $\Lambda $ is
finite).
Therefore, according to Definition \ref{principal},
$$
\Lambda =\Lambda (C_{1})\cup \dots\cup \Lambda (C_{k})\cup \Lambda (i_{1})\cup \dots \cup
\Lambda (i_{m}).  \label{frag}
$$
This decomposition will be called the \emph{canonical decomposition of} $\Lambda$
\emph{associated to} $B$.
\end{definition}

Note that the sets in the canonical decomposition are not necessarily disjoint. This is the case,
for example, when two different principal cycles, or two chain-start indices, have common descendents.

\begin{definition}
\rm
Let $\Lambda $ be a finite set and let
$\Upsilon_{1},\dots ,\Upsilon _{n}$ be non-empty subsets of $\Lambda $
such that $\Lambda =\cup _{i=1}^{n}\Upsilon _i.$
We say that $\Lambda =\cup _{i=1}^{n}\Upsilon _i$ is a
\emph{fragmentable union} if there exists disjoint non-empty subsets
$\Lambda_{1}, \Lambda _{2}$ of $\Lambda $ satisfying
$$
\Lambda =\cup _{i=1}^{n}\Upsilon _i=\Lambda _{1}\cup \Lambda
_{2},
$$
and such that for every $i=1,\dots, n$, either
$\Upsilon _{i} \subseteq \Lambda _{1}$ or
$\Upsilon _{i} \subseteq \Lambda _{2}.$
\end{definition}

For instance, if the sets $\Upsilon _{i}$ are disjoint then $\Lambda =\cup
_{i=1}^{n}\Upsilon _{i}$ is fragmentable. Note that a fragmentable union
may admit different fragmentations.

On the other hand, if $\Upsilon _{i}\cap \Upsilon _{j}\neq \emptyset $ for every $i\neq j$, then the union $\Lambda
=\cup _{i=1}^{n}\Upsilon _{i}$ is not fragmentable.

\begin{definitions}\label{opti-fragme}
\rm
Let $\Lambda $ be a finite set and let
$\Upsilon _{1}, \dots,\Upsilon _{n}$ be non-empty subsets of $\Lambda $
such that $\Lambda =\cup _{i=1}^{n}\Upsilon _i$
is a fragmentable union. A  \emph{fragmentation of }
$\Lambda =\cup _{i=1}^{n}\Upsilon _i$ is a union
$\Lambda=\cup _{i=1}^{k}\Lambda _i$ such that:
\begin{enumerate}[\rm (i)]
\item If $i\in \{1, \dots,k\}$ then $\Lambda _{i}=\cup _{j\in S_{i}}\Upsilon _{j}$ for $S_i$ a non-empty subset of $\{1,\dots,n\}$.

\item $\Lambda _{i}\cap \Lambda _{j}=\emptyset ,$ for every
$i,j\in \{1,\dots,k\}$, with $i\neq j$.
\end{enumerate}
Note that  conditions (i) and (ii) imply that for every
$j\in \{1,\dots,n\}$ there exists a unique $i\in \{1,\dots,k\}$
such that $\Upsilon _{j}\subseteq \Lambda _{i}$.

An \emph{optimal fragmentation} of a fragmentable union
$\Lambda=\cup _{i=1}^{n}\Upsilon _i$ is a fragmentation
$\Lambda =\cup _{i=1}^{k}\Lambda _i$ such that for every
$i\in \{1,\dots,k\}$ the index set $\Lambda _{i}=\cup _{j\in S_{i}}\Upsilon _{j}$ is not
fragmentable.
\end{definitions}

\medskip
In what follows we build the optimal fragmentation for any $\Lambda=\cup_{i=1}^n\Upsilon_i$.
\medskip

Let $\Lambda $ be a finite set and consider $\Upsilon _{1},\dots,\Upsilon _{n}$,
non-empty subsets of $\Lambda$ such that $\Lambda =\cup _{i=1}^{n}\Upsilon _i$
is a fragmentable union. To obtain an optimal
fragmentation of this union we
define the following equivalence relation in the set
$\{\Upsilon_{1},\dots,\Upsilon _{n}\}.$

We say that $\Upsilon _{i}\sim \Upsilon _{j}$ if there exist
$m_{1},\dots,m_{k}\in \{1,\dots,n\}$ such that
$$
\Upsilon _{i}\cap \Upsilon _{m_{1}}\neq \emptyset ,\;\Upsilon _{m_{1}}\cap
\Upsilon _{m_{2}}\neq \emptyset ,\dots,\;\Upsilon _{m_{k-1}}\cap \Upsilon
_{m_{k}}\neq \emptyset ,\ \Upsilon _{m_{k}}\cap \Upsilon _{j}\neq \emptyset .
$$

Let $S_{1}:=\{i\in \{1,\dots,n\}\ \vert \ \Upsilon _{i}\sim \Upsilon _{1}\}$; define
$\Lambda _{1}:=\cup _{i\in S_{1}}\Upsilon _{i}.$ Set $S_2=\{1,\dots, n\}\setminus S_1$
and $
\widetilde{\Lambda }_{2}:=\cup _{i\in S_{2}}\Upsilon _{i}.$
Then $\Lambda =\Lambda _{1}\cup \widetilde{\Lambda }_{2}$ with $\Lambda _{1}$
non-fragmentable. If $\widetilde{\Lambda }_{2}=\cup _{i\in S_{2}}\Upsilon
_{i}$ is non-fragmentable then by defining $\widetilde{\Lambda }_{2}=\Lambda
_{2}$ we have that $\Lambda =\Lambda _{1}\cup \Lambda _{2}$ is the optimal
fragmentation of $\Lambda =\cup_{i=1}^n\Upsilon _{i}.$
Otherwise, $\widetilde{\Lambda }_{2}=\cup _{i\in S_{2}}\Upsilon _{i}$ is
fragmentable and, as before, we may decompose $\widetilde{\Lambda }_{2}:=\Lambda _{2}\cup
\widetilde{\Lambda }_{3},$ with $\Lambda _{2}$  non-fragmentable. By
reiterating the process we obtain a decomposition $\Lambda =\cup_{i=1}^k\Lambda _i$, where every $\Lambda_i$ is non-fragmentable.
This produces an optimal fragmentation $\Lambda =\cup_{i=1}^k\Lambda_i$
 of the initial decomposition $\Lambda =\cup_{i=1}^n\Upsilon_i.$
\medskip

\begin{proposition}
\label{uni-fragme}Let $\Lambda $ be a finite set and let $\Upsilon
_{1},\dots,\Upsilon _{n}$ be non-empty subsets of $\Lambda $ $\ $such that $%
\Lambda =\bigcup\limits_{i=1}^{n}\Upsilon _{i}$ is a fragmentable union.
Then the optimal fragmentation $\Lambda =\Lambda _{1}\cup \dots\cup \Lambda
_{k}$ of $\Lambda =\bigcup\limits_{i=1}^{n}\Upsilon _{i}$ is unique (unless
reordering).
\end{proposition}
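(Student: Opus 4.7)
The plan is to show that the optimal fragmentation corresponds precisely to the partition of $\{\Upsilon_1,\dots,\Upsilon_n\}$ induced by the equivalence relation $\sim$ described just before the statement, and thereby pin down the $\Lambda_i$'s uniquely up to reordering. I would first check that $\sim$ is genuinely an equivalence relation: reflexivity is the trivial chain, symmetry is given by reversing a chain, and transitivity is concatenation. Let $T_1,\dots,T_k$ be its equivalence classes and set $\Lambda_i^\star := \bigcup_{j\in T_i}\Upsilon_j$. The construction preceding the proposition shows that $\Lambda=\Lambda_1^\star\cup\dots\cup\Lambda_k^\star$ is a fragmentation; non-fragmentability of each $\Lambda_i^\star$ follows because any two $\Upsilon_j,\Upsilon_l\in T_i$ are linked by a $\sim$-chain of consecutively meeting sets, and such a chain cannot be split across two disjoint subsets of $\Lambda_i^\star$ each containing whole $\Upsilon$'s.

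The core of the proof is uniqueness. Suppose $\Lambda=\Lambda_1\cup\cdots\cup\Lambda_r$ is any optimal fragmentation. The key elementary observation is: if $\Upsilon_j\cap\Upsilon_l\neq\emptyset$, then $\Upsilon_j$ and $\Upsilon_l$ lie in the same block of the fragmentation, since each $\Upsilon$ is entirely contained in a single block and the blocks are disjoint. Iterating along a $\sim$-chain, $\Upsilon_j\sim\Upsilon_l$ implies that $\Upsilon_j$ and $\Upsilon_l$ sit in the same block $\Lambda_s$. Consequently, each equivalence class $T_i$ is entirely absorbed into some block, and therefore $\Lambda_i^\star\subseteq \Lambda_{\sigma(i)}$ for a well-defined map $\sigma:\{1,\dots,k\}\to\{1,\dots,r\}$.

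For the converse inclusion I would invoke optimality of the given fragmentation. Fix a block $\Lambda_s$; by definition of fragmentation, $\Lambda_s$ is the union of those $\Upsilon_j$ entirely contained in it, and by the previous paragraph each such $\Upsilon_j$ is contained in a unique $\Lambda_{i(j)}^\star$. If more than one value of $i(j)$ occurred, the decomposition
\[
\Lambda_s \;=\; \bigcup_{i}\Bigl(\Lambda_s\cap \Lambda_i^\star\Bigr)
\]
(taken over those $i$ that actually appear) would be a partition of $\Lambda_s$ into at least two nonempty disjoint pieces, each a union of whole $\Upsilon_j$'s; this would contradict non-fragmentability of $\Lambda_s$. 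Hence $\Lambda_s\subseteq \Lambda_{\tau(s)}^\star$ for some $\tau(s)$, and combining the two inclusions shows $\sigma$ is a bijection with $\Lambda_i^\star=\Lambda_{\sigma(i)}$, proving uniqueness up to reordering.

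The main obstacle, and the only place where optimality (as opposed to mere being-a-fragmentation) is actually used, is the last step of forcing each block of any optimal fragmentation to coincide with a single $\sim$-class; the rest follows from bookkeeping about disjointness and the chain definition of $\sim$. Once that step is handled, uniqueness drops out immediately.
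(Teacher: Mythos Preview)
Your argument is correct. The only difference from the paper's proof is structural: you compare an arbitrary optimal fragmentation to the canonical one built from the $\sim$-classes, whereas the paper compares two arbitrary optimal fragmentations $\{\Lambda_i\}$ and $\{\widetilde\Lambda_j\}$ directly. The paper observes that if some $\Lambda_i$ met two distinct $\widetilde\Lambda_j$, $\widetilde\Lambda_{j'}$, then $\Lambda_i = \bigcup_m(\Lambda_i\cap\widetilde\Lambda_m)$ would fragment $\Lambda_i$ (each piece being a union of whole $\Upsilon$'s), contradicting optimality; hence each $\Lambda_i$ sits inside a unique $\widetilde\Lambda_j$, and by symmetry they coincide. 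This is exactly your ``converse inclusion'' step applied symmetrically, without the detour through $\sim$. Your route is slightly longer but has the mild advantage of explicitly identifying the optimal fragmentation with the $\sim$-equivalence classes; the paper's route is shorter because it never needs to verify that the $\sim$-construction is itself optimal. The key mechanism---non-fragmentability of a block forces it into a single block of any other fragmentation---is identical in both.
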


\begin{proof}
Suppose that $\Lambda =\Lambda _{1}\cup \dots \cup \Lambda _{k}$ and $\Lambda =%
\widetilde{\Lambda }_{1}\cup \dots \cup \widetilde{\Lambda }_{m}$ are two
optimal fragmentations. Take $i\in \{1,\dots,k\}.$ If there exist $j,k\in
\{1,\dots,m\}$ such that $\Lambda _i \cap \widetilde{\Lambda }_{j}\neq
\emptyset $ and $\Lambda _{i}\cap \widetilde{\Lambda }_{k}\neq \emptyset ,$
then $j=k$ because, otherwise, $\Lambda _{i}$ is fragmentable. It
follows that for every $i\in \{1,\dots,k\}$ there is a unique $j\in \{1,\dots,m\}$
such that $\Lambda _{i}\subseteq \widetilde{\Lambda }_{j}.$ We claim that $\Lambda_i = \widetilde\Lambda_j$ because otherwise $\widetilde{
\Lambda }_{j}$ would be fragmentable, a contradiction. \ We conclude that $m=k$ and that $\Lambda =%
\widetilde{\Lambda }_{1}\cup \dots \cup \widetilde{\Lambda }_{m}$ is a
reordering of $\Lambda =\Lambda _{1}\cup \dots \cup \Lambda _{k}.$
\end{proof}

By combining Theorems \ref{CharacSimple} and \ref{caracteriz}
with the optimal fragmentation process we obtain the following result.

\medskip

\begin{theorem}
\label{final}Let $A$ be a finite-dimensional evolution algebra with natural basis $B=\{e_{i}\ \vert \ i \in \Lambda \}$. Let $\{C_{1},\dots,C_{k}\}$ be the set of principal
cycles of $\Lambda ,$ $\{i_{1},\dots,i_{m}\}$ the set of all chain-start
indices of $\Lambda$ and consider the canonical decomposition
\begin{equation*}
(\dag) \quad \Lambda =\Lambda (C_{1})\cup \dots\cup \Lambda (C_{k})\cup \Lambda (i_{1})\cup \dots \cup
\Lambda (i_{m}).
\end{equation*}
Let $\Lambda= \sqcup_{\gamma\in \Gamma}\Lambda_\gamma$ be the optimal fragmentation of $(\dag)$ and decompose $B=  \sqcup_{\gamma\in \Gamma}B_\gamma$, where $B_\gamma = \{e_{i}\ \vert \ i \in \Lambda_\gamma\}$. Then $A= \oplus_{\gamma\in \Gamma}I_\gamma$, for $I_\gamma= {\rm lin}\ B_\gamma$, which is an evolution ideal of $A$. Moreover, if $A$ is non-degenerate, then $A= \oplus_{\gamma\in \Gamma}I_\gamma$ is the optimal direct-sum decomposition of $A$.
\end{theorem}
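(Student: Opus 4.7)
The plan is to establish the result in two stages: first, to verify that $A=\oplus_{\gamma\in\Gamma} I_\gamma$ as evolution ideals (valid for arbitrary, possibly degenerate $A$), and second, under non-degeneracy, to deduce irreducibility of each $I_\gamma$ by deriving a contradiction with the non-fragmentability of $\Lambda_\gamma$. The overarching structural observation is that every building block $\Upsilon$ of the canonical decomposition $(\dag)$ --- either $\Lambda(C_a)$ or $\Lambda(\{i_b\})$ --- is closed under taking descendents: if $i\in\Upsilon$ and $j\in D(i)$, then $j\in\Upsilon$. Since unions preserve this property, each $\Lambda_\gamma$ is also descendent-closed.

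For the first stage, I would use descendent-closure to check that $I_\gamma:={\rm lin}\ B_\gamma$ is an evolution ideal. For $e_i\in B_\gamma$ the product $e_i^2=\sum_{j\in D^1(i)}\omega_{ji}e_j$ lies in $I_\gamma$ because $D^1(i)\subseteq D(i)\subseteq\Lambda_\gamma$, while for $e_k\in B\setminus B_\gamma$ orthogonality of the natural basis gives $e_ke_i=0\in I_\gamma$. Hence $I_\gamma A\subseteq I_\gamma$ and $I_\gamma$ is an ideal with natural basis $B_\gamma$. Since the optimal fragmentation $\Lambda=\sqcup_{\gamma\in\Gamma}\Lambda_\gamma$ is a disjoint union covering $\Lambda$, the vector-space decomposition $A=\oplus_{\gamma\in\Gamma}I_\gamma$ is immediate; orthogonality $I_\gamma I_{\gamma'}=0$ for $\gamma\neq\gamma'$ is again a restatement of $e_ie_j=0$ for $i\neq j$.

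For the second stage, assume $A$ is non-degenerate. Each $I_\gamma$ inherits the natural basis $B_\gamma$ on which every $e_i^2\neq 0$, so $I_\gamma$ is non-degenerate. Suppose for contradiction that some $I_\gamma$ is reducible, say $I_\gamma=J_1\oplus J_2$ with $J_1,J_2$ non-zero ideals. Applying Theorem \ref{caracteriz} inside the non-degenerate algebra $I_\gamma$ yields a partition $\Lambda_\gamma=\Lambda_\gamma^1\sqcup\Lambda_\gamma^2$ with $J_k={\rm lin}\{e_i\ \vert\ i\in\Lambda_\gamma^k\}$ for $k=1,2$. I would then show that every building block $\Upsilon\subseteq\Lambda_\gamma$ sits entirely inside one of $\Lambda_\gamma^1, \Lambda_\gamma^2$: for $\Upsilon=\Lambda(\{i_b\})$, the chain-start $i_b$ belongs to some $\Lambda_\gamma^k$, and as $e_{i_b}^2\in J_k$ has nonzero projection (by non-degeneracy) on every $j\in D^1(i_b)$, all such $j$ must lie in $\Lambda_\gamma^k$; iterating puts $\Lambda(\{i_b\})$ in $\Lambda_\gamma^k$. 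For $\Upsilon=\Lambda(C_a)$, pick any $i\in C_a$ with $i\in\Lambda_\gamma^k$; the same descent argument gives $D(i)\subseteq\Lambda_\gamma^k$, and since $C_a=C(i)\subseteq D(i)$ (using the characterization of principal cycles discussed after Definition \ref{cicle}), the whole $\Lambda(C_a)$ is swept into $\Lambda_\gamma^k$. This exhibits $\Lambda_\gamma=\Lambda_\gamma^1\cup\Lambda_\gamma^2$ as a non-trivial fragmentation of its constituent building blocks, contradicting the non-fragmentability of $\Lambda_\gamma$ guaranteed by the optimality of the fragmentation. Hence each $I_\gamma$ is irreducible, so $A=\oplus_{\gamma\in\Gamma}I_\gamma$ is an optimal direct-sum decomposition; its coincidence with ``the'' optimal one follows from Theorem \ref{uniopti}.

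The main obstacle I anticipate is the principal-cycle case of the descent argument, as it combines two ingredients that must be invoked with care: the identity $C_a\subseteq D(i)$ for $i\in C_a$ (requiring the characterization of principal cycles) and the use of non-degeneracy to ensure $e_i^2$ has nonzero projection on each $j\in D^1(i)$, so that membership of $e_i^2$ in one $J_k$ transfers to every first-generation descendent. Once this is in hand, the remainder is essentially combinatorial bookkeeping built on the already-proved Theorems \ref{caracteriz} and \ref{uniopti}.
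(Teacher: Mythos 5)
Your proposal is correct and follows essentially the route the paper itself indicates (the paper leaves the proof as a one-line remark: combine Theorem \ref{caracteriz} with the fragmentation process, with uniqueness supplied by Theorem \ref{uniopti}); your descendent-closure observation for the blocks $\Lambda(C_a)$ and $\Lambda(i_b)$ is exactly the missing glue that makes each ${\rm lin}\ B_\gamma$ an evolution ideal and turns a hypothetical splitting $I_\gamma=J_1\oplus J_2$ into a fragmentation of $\Lambda_\gamma$, contradicting optimality. One cosmetic point: the nonzero projection of $e_{i}^{2}$ onto every $j\in D^{1}(i)$ is just the definition of $D^{1}(i)$, while non-degeneracy is what licenses the application of Theorem \ref{caracteriz} (and, via Example \ref{new}, is genuinely needed for the ``moreover'' part).
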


Since the optimal direct-sum decomposition of a non-degenerate
evolution algebra $A$ is unique, we conclude that, in the non-degenerate
case, the decomposition given in Theorem \ref{final} does not
depend on the prefixed natural basis $B$ (i.e. any other natural basis leads
to the same optimal direct sum decomposition).

\begin{remark}
\rm
Every  finite dimensional evolution algebra $A$ (non-degenerated or not) is the direct sum of a finite number
of irreducible evolution algebras.
Indeed, if $A$ is irreducible, then we are done. Otherwise, decompose $A=I_1\oplus I_2$, for $I_1, I_2$  ideals of $A$. If $I_1$ and $I_2$ are irreducible, then we have finished. If this is not the case, we decompose them. Since the dimension of $A$ is finite, proceeding in this way in a finite number of steps we finish.
\end{remark}

For $A$ an evolution algebra of arbitrary dimension such that  $A=\oplus_{\gamma \in \Gamma}I_\gamma$ is the optimal direct-sum decomposition of $A$, the
study of $A$ can be reduced to the study of the irreducible evolution
algebras $I_\gamma$ separately.
\medskip

The last result in this section is a consequence of Proposition \ref{simple} and Theorem \ref{final}.

\begin{corollary}\label{suma-simple}
Let $A$ be a non-degenerate finite dimensional evolution algebra with a natural basis $B=\{e_{i}\ \vert \ i \in \Lambda \}$.
Then $A=I_{1}\oplus\dots\oplus I_{k}$, where $I_i$ is an ideal, simple as an algebra,  if and only if $\Lambda $ has the following property:  $\Lambda =\Lambda _{1}\sqcup \dots \sqcup\Lambda _{k}$, where every $\Lambda_j $ is non-empty  and $I_{j}={\rm lin}\{e_i \ \vert\ i\in \Lambda_j\}={\rm lin}\{e_i^2 \ \vert \ i\in \Lambda_j\}$ and $D(i)=\Lambda_{j},$ for every $i\in\Lambda_j$.
\end{corollary}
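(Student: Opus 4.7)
The plan is to prove both implications by combining Theorem \ref{caracteriz} (which translates direct-sum decompositions of a non-degenerate evolution algebra into disjoint partitions of any natural basis) with the simplicity criterion in Theorem \ref{CharacSimple}(iv). A key preliminary observation, used in both directions, is that if $I_j$ is an ideal of $A$ generated (as a subspace) by a subset $\{e_i\mid i\in\Lambda_j\}$ of the natural basis $B$, then the descendents of any $i\in\Lambda_j$ computed in $A$ relative to $B$ agree with those computed in $I_j$ relative to $\{e_i\mid i\in\Lambda_j\}$. This holds because $e_i^2\in I_j$ forces $D^1_A(i)\subseteq\Lambda_j$, and a straightforward induction on the generation $m$ yields $D^m_A(i)\subseteq\Lambda_j$, so $D_A(i)\subseteq\Lambda_j$; the reverse containment is automatic as $I_j$'s product is the restriction of $A$'s.

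For the forward direction, assume $A=I_1\oplus\cdots\oplus I_k$ with each $I_j$ a simple ideal of $A$. Since $A$ is non-degenerate, Theorem \ref{caracteriz} provides a disjoint partition $\Lambda=\Lambda_1\sqcup\cdots\sqcup\Lambda_k$ with each $\Lambda_j$ non-empty and $I_j={\rm lin}\{e_i\mid i\in\Lambda_j\}$. By Lemma \ref{reduci}, each $I_j$ is an evolution algebra with natural basis $\{e_i\mid i\in\Lambda_j\}$, and it is non-degenerate because $e_i^2\neq 0$ for every $i\in\Lambda$. Applying Theorem \ref{CharacSimple}(iv) to the simple evolution algebra $I_j$, one obtains $I_j={\rm lin}\{e_i^2\mid i\in\Lambda_j\}$ and $D_{I_j}(i)=\Lambda_j$ for every $i\in\Lambda_j$; by the preliminary observation, $D_{I_j}(i)=D_A(i)$, which gives the stated conditions.

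For the converse, given the partition with the required properties, define $I_j:={\rm lin}\{e_i\mid i\in\Lambda_j\}$. The equality $A=\bigoplus_j I_j$ at the level of vector spaces is immediate from $\Lambda=\sqcup_j\Lambda_j$. To check that $I_j$ is an ideal, note that for $i\in\Lambda_j$ the hypothesis $D(i)=\Lambda_j$ gives $D^1(i)\subseteq\Lambda_j$, so $e_i^2\in I_j$; combined with $e_ie_{i'}=0$ whenever $i\in\Lambda_j$ and $i'\notin\Lambda_j$ (because $B$ is natural), this yields $I_jA\subseteq I_j$. Thus $I_j$ is an evolution ideal with natural basis $\{e_i\mid i\in\Lambda_j\}$ and is non-degenerate since $A$ is. The preliminary observation gives $D_{I_j}(i)=D_A(i)=\Lambda_j$ for every $i\in\Lambda_j$, which together with the assumed $I_j={\rm lin}\{e_i^2\mid i\in\Lambda_j\}$ is precisely condition (iv) of Theorem \ref{CharacSimple} applied to $I_j$; hence each $I_j$ is simple.

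The only substantive point is the invariance of the descendent set under the passage from $A$ to the ideal $I_j$, which is where a careful argument is needed (to legitimize using Theorem \ref{CharacSimple}(iv) \emph{inside} $I_j$ rather than inside $A$). Everything else is a direct translation between the ideal-theoretic statement and the basis-level statement, so no extra machinery beyond Theorem \ref{caracteriz}, Lemma \ref{reduci}, and Theorem \ref{CharacSimple} is required.
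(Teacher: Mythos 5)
Your proof is correct. It differs in route from the paper, which states this corollary without an explicit argument, attributing it to Proposition \ref{simple} (the finite-dimensional simplicity criterion) combined with Theorem \ref{final} (the fragmentation theorem). You instead bypass the fragmentation machinery entirely: you use Theorem \ref{caracteriz} to convert the direct-sum decomposition into a disjoint partition of the natural basis, and then apply the simplicity characterization of Theorem \ref{CharacSimple}(iv) \emph{inside} each summand $I_j$, viewed as an evolution algebra with natural basis $\{e_i \mid i\in\Lambda_j\}$. The genuinely valuable addition in your write-up is the preliminary observation that, for an ideal spanned by basis vectors, the descendent sets $D_{I_j}(i)$ and $D_A(i)$ coincide (via $e_i^2\in I_j$ forcing $D^1_A(i)\subseteq\Lambda_j$ and induction on the generation); since the corollary's condition $D(i)=\Lambda_j$ is stated with respect to the basis $B$ of $A$ while the simplicity criterion must be applied to $I_j$ with its own basis, this identification is exactly the step the paper's one-line attribution leaves implicit. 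Your approach buys a self-contained, checkable argument for both implications (including the easy verification that each ${\rm lin}\{e_i \mid i\in\Lambda_j\}$ is an ideal in the converse direction), at the cost of not exhibiting the decomposition as an instance of the fragmentation/optimal-decomposition framework of Theorem \ref{final}, which is the conceptual packaging the authors had in mind.
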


\section{The optimal fragmentation computed with Mathematica}

We have designed a program that provides the optimal fragmentation of an evolution algebra when we introduce the coefficient matrix as input. Moreover, the code identifies if an index is a cyclic-index, a principal cyclic index or a chain-start index. On the other hand, it calculates the nth-generation descendents of any index for every n.
We include the \emph{Mathematica} codes needed for our computations. They
consist on a list of functions written in the order they have been used. The computation of the invariants has
been performed by the \emph{Mathematica} software.

In order to compute the optimal fragmentation, we have used the proposition that follows.


\begin{proposition}
Let $\Lambda $ be a finite set and let $\Upsilon _{1},...,\Upsilon _{n}$ be
non-empty subsets of $\Lambda $ such that $\Lambda = \bigcup\limits_{i=1}^{n}\Upsilon _{i}$. Let  $(a_{ij})\in M_{n}({\mathbb{K}})$ be
the matrix defined by:  $a_{ii}=0$ for every $i$, $a_{ij}=1$ if $\Upsilon _{i}\cap \Upsilon _{j}\neq
\emptyset $ and $a_{ij}=0$ if $\Upsilon _{i}\cap \Upsilon
_{j}=\emptyset $. Let $E$ be the graph whose adjacency matrix is
$(a_{ij})$. Then, $E$ is connected if and only if $\Lambda
=\bigcup\limits_{i=1}^{n}\Upsilon _{i}$ is not a fragmentable union.
Moreover, if $E$ is not connected, then the connected components of $E$
form an optimal fragmentation of $\Lambda $.
\end{proposition}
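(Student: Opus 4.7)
The key observation is that the graph $E$ is precisely the intersection graph of the family $\{\Upsilon_1,\dots,\Upsilon_n\}$, and its path-connectedness realizes the equivalence relation $\sim$ introduced just before Proposition \ref{uni-fragme}: two indices $i,j\in\{1,\dots,n\}$ lie in the same connected component of $E$ if and only if $\Upsilon_i\sim\Upsilon_j$. With this dictionary in hand, the proof splits naturally into the two implications of the equivalence plus the moreover statement.

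For the first implication, I would assume that $E$ is not connected and exhibit a fragmentation. Pick a non-trivial partition $\{1,\dots,n\}=S_1\sqcup S_2$ of the vertex set coming from a splitting into unions of connected components (so that there is no edge of $E$ between $S_1$ and $S_2$), and set $\Lambda_r:=\bigcup_{i\in S_r}\Upsilon_i$ for $r=1,2$. The absence of edges between $S_1$ and $S_2$ translates to $\Upsilon_i\cap\Upsilon_j=\emptyset$ whenever $i\in S_1$ and $j\in S_2$, so $\Lambda_1\cap\Lambda_2=\emptyset$, while both sets are non-empty because every $\Upsilon_i$ is non-empty. Each $\Upsilon_i$ is contained in $\Lambda_1$ or in $\Lambda_2$ by construction, so $\Lambda=\Lambda_1\cup\Lambda_2$ is a fragmentation.

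For the converse, I would suppose $\Lambda=\Lambda_1\cup\Lambda_2$ is a fragmentation and show $E$ is disconnected. Set $S_r:=\{i\in\{1,\dots,n\}\ \vert\ \Upsilon_i\subseteq\Lambda_r\}$ for $r=1,2$. The definition of fragmentation gives $\{1,\dots,n\}=S_1\cup S_2$, and both $S_r$ are non-empty since $\Lambda_r\neq\emptyset$. For $i\in S_1$ and $j\in S_2$ we have $\Upsilon_i\cap\Upsilon_j\subseteq\Lambda_1\cap\Lambda_2=\emptyset$, which forces $a_{ij}=0$; consequently there is no edge in $E$ between $S_1$ and $S_2$, and $E$ is not connected.

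For the moreover part, let $E=E_1\sqcup\dots\sqcup E_k$ be the decomposition into connected components and define $\Lambda_r:=\bigcup_{i\in E_r^0}\Upsilon_i$. Disjointness of the $\Lambda_r$ is again immediate from the absence of edges between distinct components. To see that the union $\Lambda=\Lambda_1\cup\dots\cup\Lambda_k$ is an optimal fragmentation, note that for each $r$ the graph attached by the recipe of the proposition to the subfamily $\{\Upsilon_i\}_{i\in E_r^0}$ (viewed as a cover of $\Lambda_r$) is exactly the induced subgraph $E_r$, which is connected; applying the equivalence established above to this subfamily shows that $\Lambda_r$ is not fragmentable, as required. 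I expect no real obstacle here; the only care needed is to keep the distinction between the family of subsets and the underlying graph, and to verify that the recursive definition of the optimal fragmentation produced before Proposition \ref{uni-fragme} really matches the connected-components decomposition, which is what the equivalence relation $\sim$ achieves.
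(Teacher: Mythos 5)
Your proof is correct and follows essentially the same route as the paper: $E$ is treated as the intersection graph of the family, the $\Upsilon_i$ are grouped by connected components to produce a fragmentation (disjointness coming from the absence of cross-component edges), and a fragmentation is shown to force an edge-free split of the vertex set. The only differences are cosmetic---you argue both directions contrapositively, and you justify the ``moreover'' clause explicitly by applying the established equivalence to each component's subfamily, a step the paper only asserts; this is welcome extra detail rather than a different method.
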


\begin{proof}

Suppose that $\Lambda =\bigcup\limits_{i=1}^{n}\Upsilon _{i}$ is a non
fragmentable union. If $E$ is not connected, let $\Psi _{i}$ denote the connected components of $E$ with $i\in \{1,2,\ldots ,m\}$ for some $m\in \N$. This
means that we may write $\{1,2,\ldots, n\}=\bigsqcup\limits_{i=1}^{m}\Psi_{i}$ where $\Psi _{i}\subseteq \{1,2,\ldots
,n\}$. Now, we consider the sets: $\Lambda _{i}=\bigcup\limits_{j\in \Psi
_{i}}\Upsilon _{j}$. We will show that $\Lambda
=\bigcup\limits_{i=1}^{m}\Lambda _{i}$ is an optimal fragmentation. First,
we have to prove that $\Lambda _{i}\cap \Lambda _{j}=\emptyset $ for every $i\neq j$, with $i,j\in \{1,\ldots ,m\}$. If there exists $\omega \in \Lambda _{i}\cap\Lambda _{j}$, then there are $r\in \Psi _{i}$ and $s\in \Psi _{j}$ such that $\omega \in \Upsilon _{r}\cap \Upsilon _{s}$. This implies that $\Upsilon_{r}\cap \Upsilon _{s}\neq \emptyset $, i.e. $r$ and $s$ are connected. This is a contradiction because they belong to different connected components.
Conversely, suppose that $E$ is connected. If $\Lambda=\bigcup\limits_{i=1}^{n}\Upsilon _{i}$ is a fragmentable union then there exist $\Lambda _{1}$ and $\Lambda _{2}$ disjoint subsets of $\Lambda $ satisfying that $\Lambda =\Lambda _{1}\cup \Lambda _{2}$ and such that for every $i=1,\ldots ,n$, either $\Upsilon _{i}\subseteq \Lambda _{1}$ or $\Upsilon _{i}\subseteq \Lambda _{2}$. Let $\alpha \in \Lambda _{1}$ and $\beta \in \Lambda _{2}$. This means that there exist $i,j\in \{1,\ldots ,n\}$ such that $\alpha \in \Upsilon _{i}\subseteq \Lambda _{1}$ and $\beta \in \Upsilon _{j}\subseteq \Lambda _{2}$. As $E$ is connected, there exists a path from $\alpha $ to $\beta $. This implies that there exist $i_{1},\ldots ,i_{k}\in \{1,\ldots ,n\}$ such that
$$
\Upsilon _{i}\cap \Upsilon _{i_{1}}\neq \emptyset ,\Upsilon _{i_{1}}\cap
\Upsilon _{i_{2}}\neq \emptyset ,\ldots, \Upsilon _{i_{k}}\cap \Upsilon
_{j}\neq \emptyset,
$$
a contradiction because $\alpha \in \Upsilon _{i}\subseteq \Lambda _{1}$ and $\beta \in
\Upsilon _{j}\subseteq \Lambda _{2}$. Furthermore, from this reasoning we deduce that the connected components of $E$ make an optimal fragmentation of $\Lambda $.
\end{proof}


In what follows we provide  a list with the routines that have been used together with a brief description of them.

\begin{itemize}
\item {\bf D}$_1$: computes the first-generation descendents of i.
\item {\bf D}$_n$: computes the nth-generation descendents of i.
\item {\bf CycleQ}: checks if P has a cycle.
\item {\bf DP}: computes D(i).
\item {\bf CyclicQ}: checks if P has some cyclic index.
\item {\bf CycleAssociated}: computes the cycle associated to i.
\item {\bf Ascendents}: computes the ascendents of i.
\item {\bf PrincipalCycleQ}: checks if i is a principal cyclic-index.
\item {\bf ChainStartQ}: checks if i is a chain-start index.
\item {\bf CanonicalDecomposition}: computes a canonical decomposition associated to P.
\item {\bf OptimalFragmentation}: computes an optimal fragmentation associated to P.
\end{itemize}

Finally, we include the \emph{Mathematica} code of all these functions.

\medskip

\def\peq{\fontsize{9}{9}\selectfont}

\medskip

{\peq
\begin{tabular}{l}
$l={\bf Table}[i,\{i,n\}];$\\
\hskip .5cm ${\rm {\bf D}_1 [i\_,P\_]:= {\bf Select}[l,P[[\#,i]]\neq 0 \&];}$\\
\end{tabular}
}

\medskip

{\peq
\begin{tabular}{l}
${\rm {\bf D}_{n\_} [i\_, P\_ ]:= {\bf Module}[\{j, a, s\},}$\\
\hskip .5cm ${\rm a = \{\}; s={\bf Length}[{\bf D}_{n-1}[i,P]];}$ \\
\hskip .5cm ${\rm {\bf If}[n==1, {\bf D}_1[i,P],}$ \\
\hskip .5cm ${\rm {\bf Union}[{\bf Flatten}[{\bf Table}[{\bf D}_1[{\bf D}_{n-1}[i,P][[t]],P],\{t,{\bf Length}[{\bf D}_{n-1}[i,P]]\}]]]]]}$ \\
\end{tabular}
}

\medskip

{\peq
\begin{tabular}{l}
${\rm {\bf CycleQ}[P\_ ]:= {\bf Module}[\{n, a\},n={\bf Length}[P];}$\\
\hskip .5cm ${\rm a = {\bf Union}[{\bf Flatten}[{\bf Table}[}$ \\
\hskip .5cm ${\rm {\bf Diagonal}[{\bf MatrixPower}[P,i]],\{i,1,n\}]]];}$ \\
\hskip .5cm ${\rm {\bf MemberQ}[a,1];}$ \\
\end{tabular}
}

\medskip

{\peq
\begin{tabular}{l}
${\rm {\bf DYesCycle}[i\_,P\_ ]:= {\bf Module}[\{j, a\},}$\\
\hskip .5cm ${\rm a = \{\};}$ \\
\hskip .5cm ${\rm {\bf For}[j=1,j<={\bf Length}[P],j++,}$ \\
\hskip .5cm ${\rm {\bf AppendTo} [a,{\bf D}_{j}[i,P]]];}$ \\
\hskip .5cm ${\rm {\bf Apply}[{\bf Union},a]]}$\\
\end{tabular}
}

\medskip

{\peq
\begin{tabular}{l}
${\rm {\bf DNotCycle}[i\_,P\_ ]:= {\bf Module}[\{j, a\},}$\\
\hskip .5cm ${\rm a = \{{\bf D}_1[i,P]\};}$ \\
\hskip .5cm ${\rm {\bf For}[j=1,{\bf D}_{j}[i,P]\neq {\bf D}_{j+1}[i,P],j++,}$ \\
\hskip .5cm ${\rm {\bf AppendTo} [a,{\bf D}_{j+1}[i,P]]];}$ \\
\hskip .5cm ${\rm {\bf Apply}[{\bf Union},a]]}$\\
\end{tabular}
}

\medskip

{\peq
\begin{tabular}{l}
${\rm {\bf DP} [i\_, P\_ ]:= {\bf If}[{\bf CycleQ}[P],{\bf DYesCycle}[i,P],{\bf DNotCycle}[i,P]]}$ \\
\end{tabular}
}

\medskip

{\peq
\begin{tabular}{l}
${\rm {\bf CyclicQ} [i\_, P\_ ]:= {\bf If}[{\bf MemberQ}[{\bf DP}[i,P],i],}$ \\
\hskip .5cm {\rm {\bf Print}[i ``is a cyclic index"],{\bf Print}[i ``is not a cyclic index"]]}\\
\end{tabular}
}

\medskip
{\peq
\begin{tabular}{l}
${\rm {\bf CycleAssociated}[i\_,P\_ ]:= {\bf Module}[\{j, a\},}$\\
\hskip .5cm ${\rm a = \{\};}$ \\
\hskip .5cm ${\rm {\bf For}[j=1,j<={\bf Length}[P],j++,}$ \\
\hskip .5cm ${\rm {\bf If}{\bf MemberQ}[{\bf DP}[i,P],j] \& \& {\bf MemberQ}[{\bf DP}[j,P],i],}$ \\
\hskip .5cm ${\rm {\bf AppendTo} [a,j]]];}$ \\
\hskip .5cm ${\rm a]}$\\
\end{tabular}
}

\medskip

{\peq
\begin{tabular}{l}
${\rm {\bf Ascendents}[i\_,P\_ ]:= {\bf Module}[\{j, a\},}$\\
\hskip .5cm ${\rm a = \{\};}$ \\
\hskip .5cm ${\rm {\bf For}[j=1,j<={\bf Length}[P],j++,}$ \\
\hskip .5cm ${\rm {\bf If}{\bf MemberQ}[{\bf DP}[j,P],i],}$ \\
\hskip .5cm ${\rm {\bf AppendTo} [a,j]]];}$ \\
\hskip .5cm ${\rm a]}$\\
\end{tabular}
}

\medskip

{\peq
\begin{tabular}{l}
${\rm {\bf Subset}[A\_,B\_ ]:= ({\bf Union}[A,B]=={\bf Union}[B])}$
\end{tabular}
}

\medskip

{\peq
\begin{tabular}{l}
${\rm {\bf PrincipalCycleQ} [i\_, P\_ ]:= {\bf If}[{\bf Subset}[{\bf Ascendents }[i,P],{\bf CycleAssociated}[i,P]],}$ \\
\hskip .5cm {\rm {\bf Print}[i ``is a principal cyclic-index"],} \\
\hskip .5cm {\rm {\bf Print}[i ``is not a principal cyclic-index"]]}
\end{tabular}
}

\medskip

{\peq
\begin{tabular}{l}
${\rm {\bf ElementsNotNoneRow}[P\_]:= {\bf Module}[\{j\},}$\\
\hskip .5cm ${\rm {\bf Select}[{\bf Table}[j,\{j,{\bf Length}[P]\}],P[[\#]]== 0 P[[1]]\&]];}$ \\
\end{tabular}
}

\medskip

{\peq
\begin{tabular}{l}
${\rm {\bf ChainStartQ} [i\_, P\_ ]:= {\bf If}[{\bf MemberQ}[{\bf ElementsNotNoneRow}[P],i],}$\\
\hskip .5cm {\rm {\bf Print}[i `` is a  chain-start index"],} \\
\hskip .5cm {\rm {\bf Print}[i ``is not a chain-start index"]]}\\
\end{tabular}
}

\medskip

{\peq
\begin{tabular}{l}
${\rm {\bf \Lambda} [i\_, P\_ ]:= {\bf Union}[\{i\},{\bf DP}[i,P]];}$\\
\end{tabular}
}

\medskip

{\peq
\begin{tabular}{l}
${\rm {\bf LambdaChainStart} [P\_ ]:={\bf Table}[{\bf \Lambda}[{\bf ElementsNotNoneRow}[P][[i]],P], \{i,{\bf Length}[{\bf ElementsNotNoneRow}[P]]\}]}$
\end{tabular}
}

\medskip

{\peq
\begin{tabular}{l}
${\rm {\bf LambdaPrincipalCycle}[P\_ ]:= {\bf Module}[\{j, a\},}$\\
\hskip .5cm ${\rm a = \{\};}$ \\
\hskip .5cm ${\rm {\bf For}[j=1,j<={\bf Length}[P],j++,}$ \\
\hskip .5cm ${\rm {\bf If}[{\bf Subset}[{\bf Ascendents }[j,P],{\bf CycleAssociated}[j,P]],}$ \\
\hskip .5cm ${\rm {\bf AppendTo} [a,{\bf DP}[j,P]]]];}$ \\
\hskip .5cm ${\rm a]}$\\
\end{tabular}
}

\medskip

{\peq
\begin{tabular}{l}
${\rm {\bf CanonicalDecomposition} [P\_ ]:= {\bf Join}[{\bf LambdaChainStart}[P], {\bf LambdaPrincipalCycle}[P]]}$\\
\end{tabular}
}

\medskip

{\peq
\begin{tabular}{l}
${\rm {\bf f} [i\_, j\_ , P\_ ]:= {\bf If}[[i==j,0,}$\\
\hskip .5cm ${\rm {\bf If}[{\bf Intersection}[{\bf Part}[{\bf CanonicalDecomposition [P]},i],}$ \\
\hskip .5cm ${\rm {\bf Part}[{\bf CanonicalDecomposition [P]},j]]\neq 0,1,0]]}$
\end{tabular}
}

\medskip

{\peq
\begin{tabular}{l}
${\rm {\bf Matr}[P\_ ]:= {\bf Table}[}$\\
\hskip .5cm ${\rm {\bf f}[i,j,P],\{i,{\bf Length}[{\bf CanonicalDecomposition[P]}]\},\{j}$ \\
\hskip .5cm ${\rm {\bf Length}[{\bf CanonicalDecomposition[P]}]\}]}$
\end{tabular}
}

\medskip

{\peq
\begin{tabular}{l}
${\rm {\bf OptimalFragmentation }[P\_ ]:= {\bf ConnectedComponents}[{\bf AdjacencyGraph}[{\bf Matr[P]}],{\bf VertexLabels}\rightarrow {\bf "Name"}]}$\\
\end{tabular}
}

One concrete example showing how this program works can be found in
\url{https://www.dropbox.com/s/2mtdojjaj1o20m8/OptimalFragmentation.pdf?dl=0}. We have not included it here to not enlarge the paper.

\medskip


\section*{Acknowledgments}
The authors would like to thank all the participants in the M\'alaga permanent seminar ``Estructuras no asociativas y \'algebras de caminos de Leavitt" for useful discussions during the preparation of this paper, in particular to Prof. C\'andido Mart\'\i n Gonz\'alez for his helpful comments. They also would like to thank the referee for useful suggestions which have enriched the paper.
\medskip

The three authors have been supported by the Junta de Andaluc\'{\i}a and Fondos FEDER, jointly, through projects  FQM 199 (the third author) and FQM-336 and FQM-7156 (the first and second authors). The two first authors have been also partially supported by the Spanish Ministerio de Econom\'ia y Competitividad and Fondos FEDER, jointly, through project  MTM2013-41208-P.

\end{document}